\def\BState{\State\hskip-\ALG@thistlm}
\newcommand\numberthis{\addtocounter{equation}{1}\tag{\theequation}} %for using in align* to enumerate smth
\renewenvironment{proof}[1][\proofname ]{{\noindent \bfseries #1. }}{\qed \bigskip } 
\def\subjclass#1{{\renewcommand{\thefootnote}{}%
\footnote{\emph{Mathematics Subject Classification (2010):} #1}}}
\newcommand{\R}{{\mathbb R}}
\newcommand{\Z}{{\mathbb Z}}
\newcommand{\N}{{\mathbb N}}
\newcommand{\e}{\varepsilon}
\newcommand{\supp}{\operatorname{supp}}
\newcommand{\CAP}{\kappa_0}  %for the capacity - HAYK
\newcommand\INT[1]{\overset{\circ}{ #1}}  %for the interior - HAYK   %consider also $\mathring{V}$ 
\newtheorem{theorem}{Theorem}[section]
\newtheorem{cor}[theorem]{Corollary}
\newtheorem{definition}{Definition}[section]
\newtheorem{lem}[theorem]{Lemma}
\newtheorem{prop}[theorem]{Proposition}
\newtheorem{remark}[theorem]{Remark}
\def\@tocline#1#2#3#4#5#6#7{\relax
  \ifnum #1>\c@tocdepth % then omit
  \else
    \par \addpenalty\@secpenalty\addvspace{#2}%
    \begingroup \hyphenpenalty\@M
    \@ifempty{#4}{%
      \@tempdima\csname r@tocindent\number#1\endcsname\relax
    }{%
      \@tempdima#4\relax
    }%
    \parindent\z@ \leftskip#3\relax \advance\leftskip\@tempdima\relax
    \rightskip\@pnumwidth plus4em \parfillskip-\@pnumwidth
    #5\leavevmode\hskip-\@tempdima
      \ifcase #1
       \or\or \hskip 2em \or \hskip 2em \else \hskip 3em \fi%
      #6\nobreak\relax
    \dotfill\hbox to\@pnumwidth{\@tocpagenum{#7}}\par
    \nobreak
    \endgroup
  \fi}
\numberwithin{equation}{section}
\title[Discrete Balayage and Boundary Sandpile]{Discrete Balayage and Boundary Sandpile}
\author{Hayk Aleksanyan}
\address{Department of Mathematics, KTH Royal Institute of Technology, SE-100 44  Stockholm,
Sweden}
\email{hayk.aleksanyan@gmail.com}
\author{Henrik Shahgholian}
\address{Department of Mathematics, KTH Royal Institute of Technology, SE-100 44  Stockholm,
Sweden}
\email{henriksh@math.kth.se}
\thanks{H. A. was supported by postdoctoral fellowship from Knut and Alice Wallenberg Foundation. 
H. Sh. was partially supported by Swedish Research Council.}
\keywords{Boundary sandpile, balayage, lattice growth model, quadrature surface, divisible sandpile, asymptotic shape,
free boundary, Abelian sandpile}
\begin{document}

\subjclass{31C20, 35R35, 60J45 (31C05, 60G50, 82C41)}

\begin{abstract}    
We introduce a new lattice growth model, which we call boundary sandpile. The model   amounts to potential-theoretic redistribution of a given initial mass on $\Z^d$
($d\geq 2$) onto the boundary of an (a priori) unknown domain. The latter  evolves through sandpile dynamics, 
and has the property that the mass on the boundary is forced to stay below a prescribed threshold. 
Since finding the domain is part of the problem, the redistribution process is a discrete model of  a free boundary problem, whose continuum limit is yet 
to be understood.

We prove general results concerning our model. These include  canonical representation of the model  in terms of the smallest
super-solution among a certain class of functions,  uniform Lipschitz regularity of the scaled odometer function, and hence the
convergence of a subsequence of the  odometer  and the visited sites, discrete symmetry properties, as well as directional
monotonicity of the odometer function. The latter (in part)  implies the Lipschitz regularity of the free boundary of the sandpile. 

As a direct application of some of the methods developed in this paper,
combined with earlier results on classical Abelian sandpile,
we show that the boundary of the scaling limit of Abelian sandpile
is locally a Lipschitz graph.
\end{abstract}

\maketitle

{\small{ \tableofcontents}}

\section{Introduction}

\subsection{Background}
Recent years have seen a surge of modelling particle dynamics 
from a discrete  potential theoretic perspective. The models, which usually run under the heading
{\it aggregation models}, in particular cases boil down to (harmonic/Poisson) redistribution of a given initial mass  (sandpile) according to some prescribed governing rules. The most famous and well-known model is the
  Poincar\'e's Balayage, where a given initial mass distribution $\mu$ (in a given domain $D$) is to be redistributed (or mapped) to another distribution $\nu$ on the boundary of the domain
  \begin{equation}\label{balayage}
\mathrm{Bal}:  \mu   \longrightarrow    \nu ,
\end{equation}
where $\nu$ is uniquely defined through $\mu$, and $D$.
This model uses continuous amounts of mass instead of discrete ones; the latter  is more common in  chip firing on graphs
(see \cite{BLS} for instance).

 A completely different model called \emph{partial-balayage} (see \cite{Gust-part})\footnote{The term partial-balayage was first coined by Ognyan Kounchev as was pointed to us by Bj\"orn Gustafsson at KTH.} 
 aims at finding a body (domain) that is gravi-equivalent  with the given initial mass. This problem, in turn, is  equivalent to variational inequalities and the so-called obstacle problem.
 The discrete version of this problem was (probably for the first time) studied  by 
 D. Zidarov, where he performed (what is now called) a \emph{divisible  sandpile} model; see  \cite{Zidarov} page 108-118.\footnote{Actually Zidarov (at page 109 in his book)
 claims to  prove that the model is abelian. We have not attempted to verify the correctness of his claim.}
 
Levine \cite{Lev-thesis}, and Levine-Peres \cite{Lev-Per}, \cite{Lev-Per10} started a systematic study of such problems, 
proving, among other things, existence of scaling limit for divisible sandpiles.  Although Zidarov was the first to consider such a problem,
the mathematical rigour is to be attributed to Levine \cite{Lev-thesis}, and Levine-Peres \cite{Lev-Per}, \cite{Lev-Per10}.

The divisible sandpile, which is of particular relevance to our paper,
is a growth model on $\Z^d$ $(d\geq 2)$ which amounts to redistribution of
a given continuous mass. 
The redistribution of mass takes place according to a (given) simple rule:  each lattice point can, and eventually must topple if it already has more than
a prescribed amount of mass (sand). The amount to topple is the excess, which is divided between all the neighbouring lattice points equally or according to a governing background PDE.
The scaling limit of this model, when the lattice spacing tends to 0, and the amount of mass
is scaled properly, leads to the \emph{obstacle problem} in $\R^d$ $(d\geq 2)$.

The divisible sandpile model of Zidarov, and Levine-Peres also relates to a well-known problem in potential theory, the so-called Quadrature Domains (QD) \cite{QD-book}.
A quadrature domain $D$ (with respect to a given measure $\mu$) is a domain that has the same exterior Newtonian  potential (with uniform density)
as that of the measure $\mu$. Hence, potential theoretically $\mu$ and $\chi_D$ are equivalent in the free space; i.e. outside the support of $D$
one has
$U^\mu = U^{\chi_D}$, where these are the Newtonian potentials of $\mu$, respectively $\chi_D$.
The odometer function  $u$ of Levine-Peres (which represents the amount of mass emitted from each lattice site) corresponds to the difference between the above potentials (up to a normalization constant),
i.e. $c_d u (x) = U^\mu - U^{\chi_D}$, where $u$ is $C^{1,\alpha} (\R^d)$ and 
$u=|\nabla u| = 0$ in $\R^d \setminus D$. This, expressed differently, means that
\begin{equation}\label{QD}
\int h(x) d\mu = \int_D h(x) dx , 
\end{equation}
for all $h$ harmonic and integrable over $D$ (see \cite{QD-book}).

In many other (related) free boundary value problems (known as Bernoulli type) the zero boundary gradient  $|\nabla u | =0$ in the above takes a different turn, and is a prescribed (strictly) non-zero function, 
and the volume potential $ U^{\chi_D}$ in \eqref{QD} is replaced by  surface/single layer potential (of the a priori unknown domain) $ U^{\chi_{\partial D}}$. 
In terms of sandpile redistribution this means to find a domain $D$ such that the given  initial mass $\mu$ in \eqref{balayage} is replaced by a prescribed 
mass $\nu = g(x) d \mathcal{H}_{\partial D} $ on $\partial D$.
Here $ \mathcal{H}_{\partial D}$ is the standard surface measure on $\partial D$, and $g$ a given function;
for the simplest case  $g(x) \equiv 1$ the counterpart of \eqref{QD} would be
 \begin{equation}\label{QS}
\int h(x) d\mu = \int_{\partial D} h(x) d \mathcal{H}_{\partial D},
\end{equation}
for all $h$ harmonic on a neighbourhood of $\overline D$.  
Such domains are referred to as Quadrature Surfaces, with corresponding PDE 
$$
\Delta u = -\mu + d \mathcal{H}_{\partial D}
$$
with $c_d u= U^\mu -U^{\chi_{\partial D}}$, where $c_d$ is a normalization constant.
This problem is well-studied in the continuum case and 
there is a large amount of literature concerning existence, regularity and geometric properties of both solutions and the free boundary, 
see \cite{AC}, \cite{GS}, and the references therein.
 
In our search to find a model for the sandpile particle dynamics to model \eqref{QS} we came across a few models that we (naively) thought could and should be the answer to our quest. However, running numerical
simulations (without any theoretical attempts)
it became apparent that the models we suggested are far from the Bernoulli
type free boundaries\footnote{
Although uniqueness (in general) fails  for Bernoulli type free boundary problems,
it is well-known that in the case of good geometry of data (here Dirac mass) and (partial) smoothness of the free boundary
one obtains uniqueness of the solution. In particular a domain $D$ admitting quadrature identity \eqref{QS},
 in continuum case, should   be a sphere.}. 
In particular, redistribution of  an initial  point mass does not converge (numerically) to a sphere, but
to a shape with a boundary remotely resembling the boundary of the Abelian sandpile
(see Figure\footnote{This of course is far from a  reasonable shape for a Bernoulli free boundary!} \ref{Fig-1}; cf. \cite[Figure 1]{PS}, or \cite[Figure 2]{Lev-Per} for instance).

\begin{figure}[!htbp]
\centering
\begin{subfigure}
  \centering
  \includegraphics[width=1.8in]{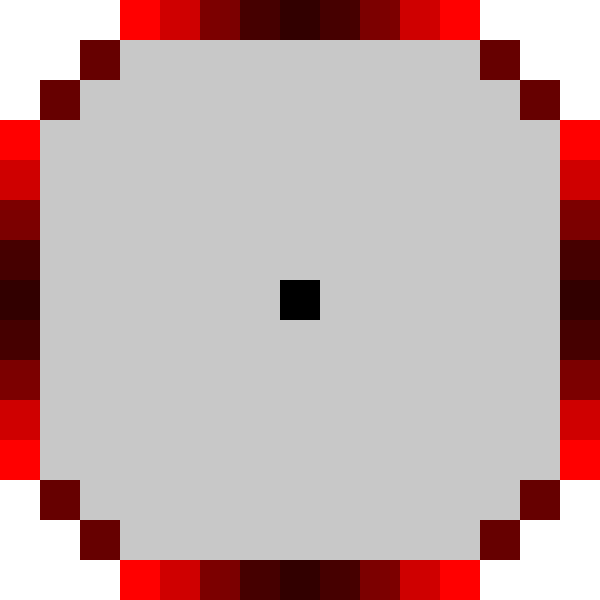}
%  \caption{A subfigure}
  \label{fig:sub1}
\end{subfigure}%
\qquad
\begin{subfigure}
  \centering
  \includegraphics[width=1.8in]{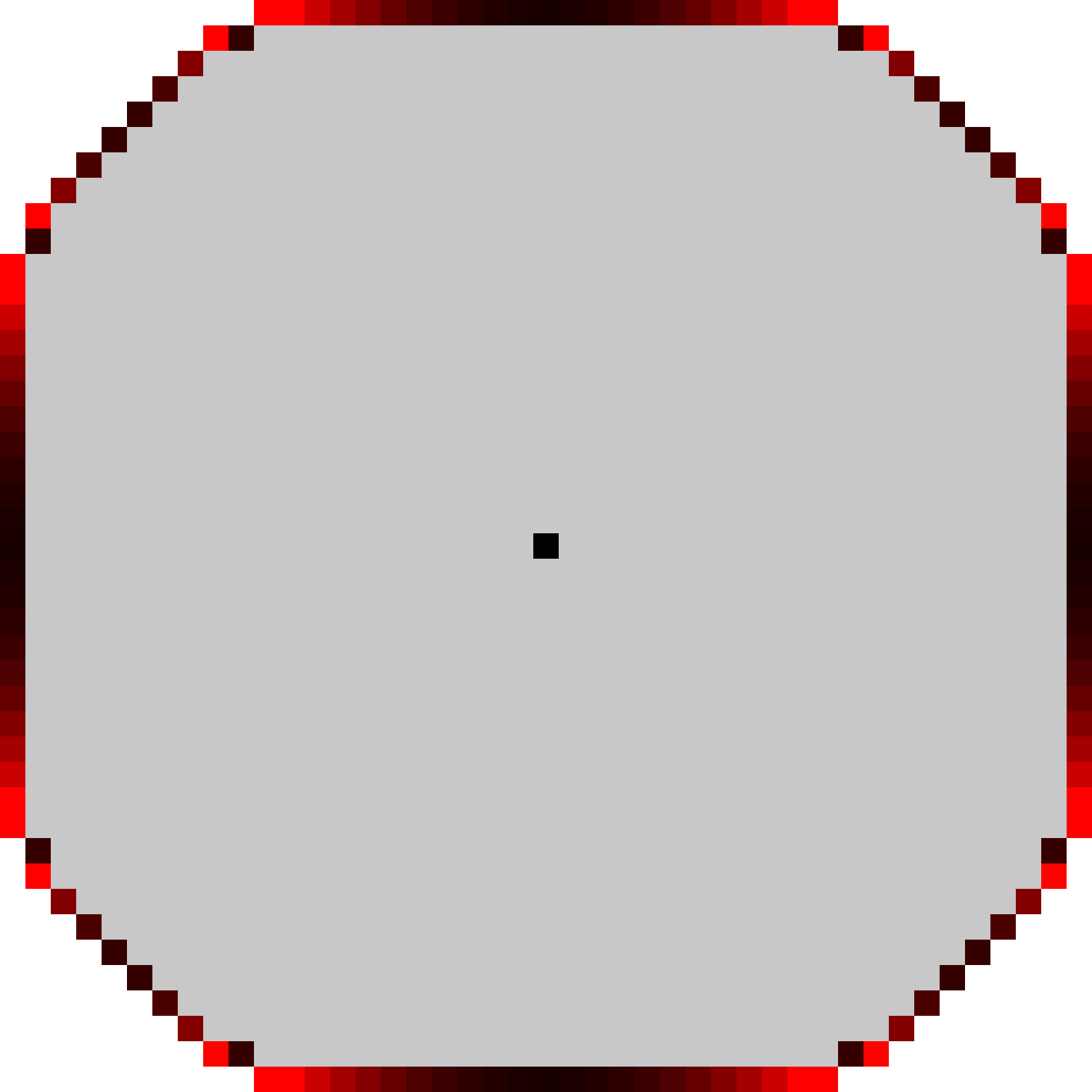}
%  \caption{A subfigure}
  \label{fig:sub2}
\end{subfigure}

\vspace{0.5cm}

\begin{subfigure}
  \centering
  \includegraphics[width=1.8in]{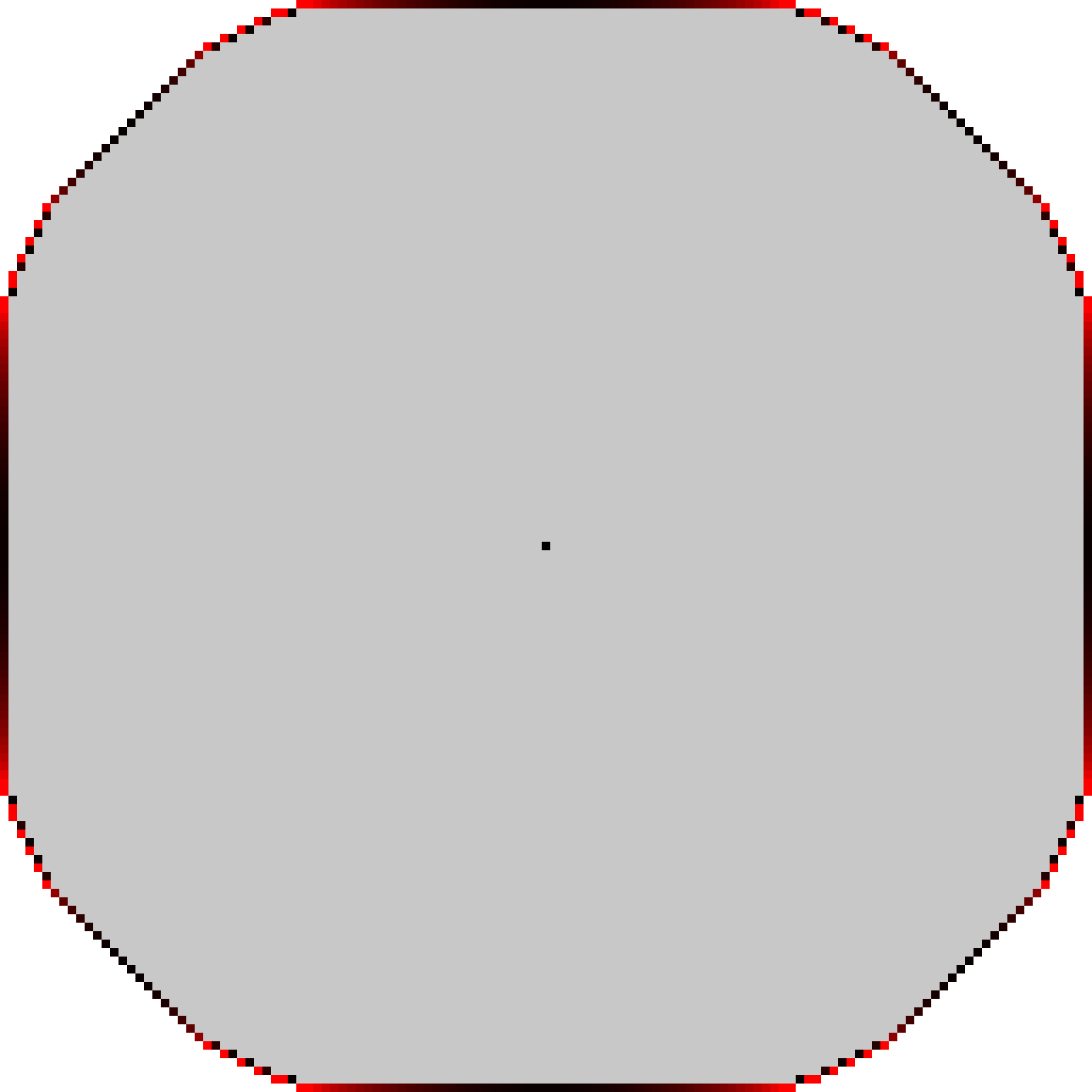}
%  \caption{A subfigure}
  \label{fig:sub1}
\end{subfigure}%
\qquad
\begin{subfigure}
  \centering
  \includegraphics[width=1.8in]{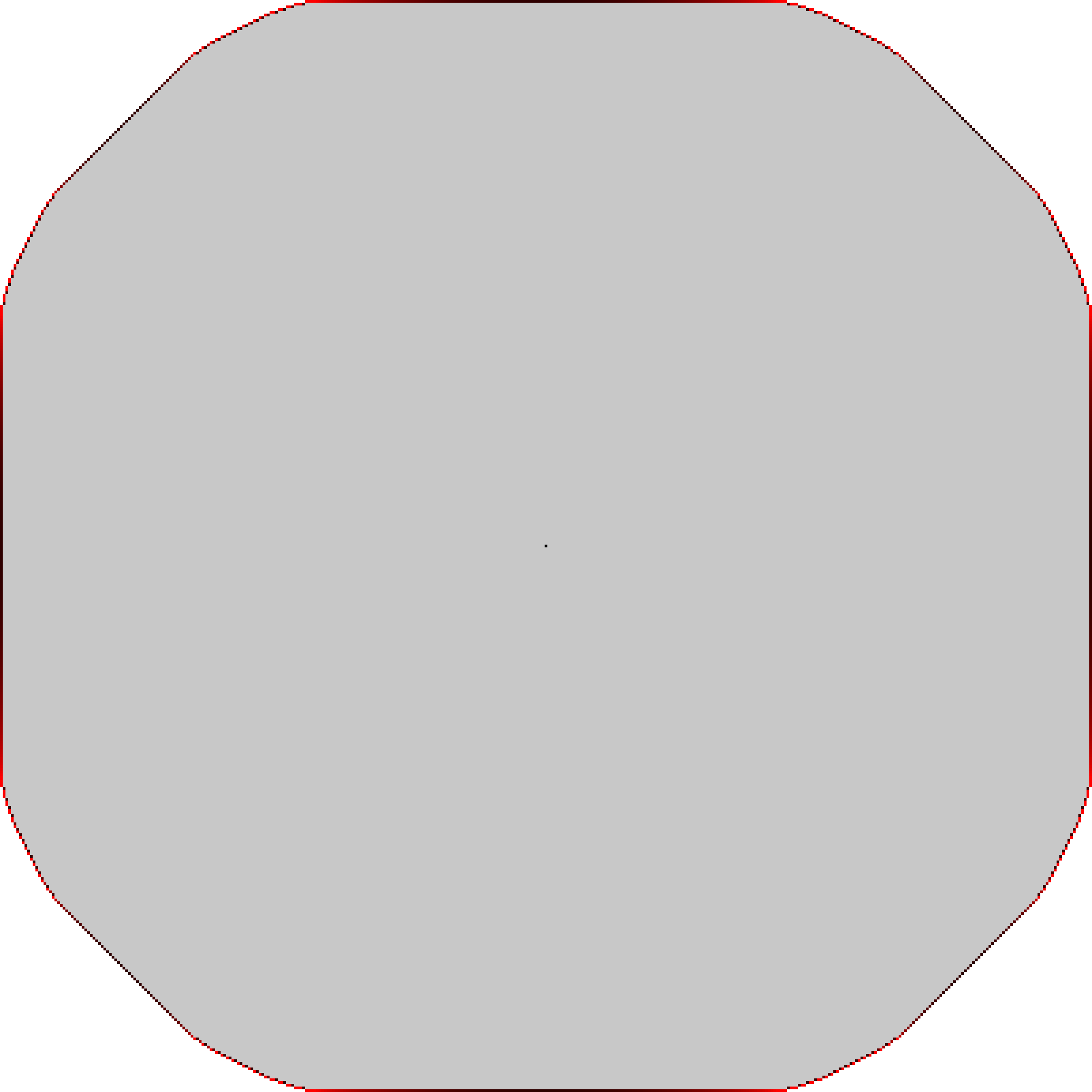}
%  \caption{A subfigure}
  \label{fig:sub2}
\end{subfigure}
\caption{\footnotesize{From top left to bottom right are the sets of visited sites on $\Z^2$ of
 the 2-dimensional BS with initial mass $n$ concentrated at the origin and equal to
$10^3$, $10^4$, $10^5$ and $10^6$ respectively, where in all cases the boundary capacity is set to $n^{1/2}$.
The gray region is the set where the odometer is harmonic. On the boundary, darker colors indicate higher concentration of mass,
and the black dot at the center represents the origin.}}
\label{Fig-1}
\end{figure}

Notwithstanding this, our model seems to present a new alluring and fascinating  phenomenon not considered earlier,
neither by combinatorics nor free boundary communities. Hence the ``birth" of this article.

\subsection{Boundary sandpile model}
Since the boundary of a set will be a prime object in the analysis
of the paper, we will assume throughout the text that $d\geq 2$
to avoid uninteresting cases.

Given an initial mass $\mu$ on $\Z^d$, we want to find a model that builds a domain $D\subset \Z^d$ such that the corresponding (discrete) balayage measure $\nu$ on the boundary $\partial D$
(see \eqref{balayage}) is prescribed. Such a model seems for the moment infeasible for us.
A slight variation of it asks to find a ``canonical" domain such that the boundary mass $\nu$ stays below the prescribed threshold. Since larger
domains (with reasonable geometry)  have larger boundaries, and hence a greater amount of boundary points, we should expect that one can always find a solution to our problem by taking a very large domain
containing the initial mass. This, in particular, suggests that a canonical domain should be the smallest domain among all domains with the property 
that the boundary mass $\nu$ is below the prescribed mass, which we shall assume to be uniform mass.\footnote{Solving the above problem in $\R^d$ 
results in a so-called Bernoulli free boundary problem, where the Green's potential $G_{\mu,D}$ of the mass $\mu$, in the sought domain $D$, 
has boundary gradient $|\nabla G_{\mu,D}|= 1$, in case of uniform distribution, i.e. $\nu = d \mathcal{H}_{\partial D}$.} 

Departing from the above point of view of canonicity one may ask whether there is a natural
lattice growth model which corresponds to this 
minimality.\footnote{It should be remarked that the divisible sandpile related to the obstacle problem corresponds to smallest (super)solution to 
$\Delta w \leq 1$ with $w\geq  G_{-\mu}$.} 
It turns out that there actually is a model which seemingly is more complicated than the divisible sandpile model described above.
To introduce our model, which we  call  \emph{boundary sandpile},
we need a few formal definitions\footnote{Although this is a generic name, as there are obviously many
other ways to topple particles/mass onto the boundary (see Remark \ref{Rem-general-F}), we think it might be more convenient without any further specification with descriptive names
to use the term Boundary Sandpile.}.

\subsubsection{Constants}
Through the text various letters $c, C, c_1,...$ will stand for constants
which may change from formulae to formula. We use lower case $c$ to denote small constants,
and upper case $C$ for large constants. 

\subsubsection{Lattice notation}
Call two lattice points $x,y \in \Z^d$ {\it neighbours} and write $x\sim y$ if $||x-y||_{l^1} =1$,
where $l^1$-norm of $x\in \R^d$ is the sum of absolute values of its coordinates.
Clearly $x\sim y$ iff $x=y \pm e_i$ for some $1\leq i \leq d$ where $e_i$
is the $i$-th vector of the standard basis of $\R^d$.
For any non-empty set $V\subset \Z^d$, this concept of lattice adjacency
induces a natural graph $G$ having vertices on
$V$. We call such $G$ the \emph{lattice graph} of $V$.

Next, for a set $V\subset \Z^d$ define
$$
\partial V:= \{ x\in V: \exists \ y\sim x \text{ such that }  y\notin V \} \ \text{ and } 
\ \INT{V} = V \setminus \partial V,
$$
and call them respectively the \textbf{boundary} of $V$ and the \textbf{interior}
of $V$. For the convenience of the exposition we have chosen to include the boundary into the set.

For $h>0 $ recall the definition of the \emph{discrete Laplace operator}
denoted by $\Delta^h$
and acting on a function $u: h\Z^d \to \R$ by
$$
\Delta^h u(x) = \frac{1}{2d h^2} \sum\limits_{y\sim_h x} [u(y) - u(x)], \qquad x\in h\Z^d,
$$
where $y\sim_h x$, as for $h=1$, means that $y$ is a neighbour of $x$ in the lattice $h \Z^d$.
Throughout the text, when $h=1$, i.e. on a unit scale lattice, we will write $\Delta$ for $\Delta^1$
unless explicitly stated otherwise.

For $R>0$ we set
\begin{equation}\label{def-Z-R}
Z_R = \{\xi \in \Z^d: \ |\xi|\leq R\} \cup 
\{x\in \Z^d: \ \exists \ \xi \in \Z^d  \text{ s.t. } \xi \sim x \text{ and } |\xi |\leq R< |x|   \}
\end{equation}
for the closed discrete ball of radius $R$ including its 1-neighbourhood.
It is clear from the definition that $x\in \Z^d$ is from the interior of $Z_R$
iff $|x|\leq R$.

\subsubsection{Definition of the process}
Let now  $\mu_0$ be a given (mass) distribution on $\Z^d$, i.e. a non-negative function supported on finitely many sites of $\Z^d$. Fix also a threshold $\CAP>0$. For each integer $k\geq 0$ we inductively construct a sequence of sets $V_k$,
mass distributions $\mu_k$, and functions $u_k$ as follows.
Start with $V_0 = \mathrm{supp} \mu_0$ and $u_0=0$.
For an integer time $k\geq 0$ a particular site $x\in \Z^d$ is called \textbf{unstable}
if either of the following holds:
\begin{itemize}
 \item[(a)] $x\in \partial V_k$ and $\mu_k (x) > \CAP$,
 
 \item[(b)] $x\in \INT{V_k}$ and $\mu_k (x) >0$.
\end{itemize}
Otherwise a site is called \textbf{stable}.
We call the number $\CAP$ the \textbf{(boundary) capacity} of the model and refer to $V_k$ as the set of \emph{visited sites}
at time $k$.

Any unstable site can \emph{topple} by distributing all its mass equally among its $2d$ lattice neighbours.
More precisely, for each $k\geq 0$ we choose an unstable site $x\in V_k$ and define
$V_{k+1} = V_k \cup \{y\in \Z^d: \ y\sim x \}$,
\begin{equation}\label{mu-k+1-def}
 \mu_{k+1}(y) =  \begin{cases}  0, &\text{if $y=x$}, \\ 
 \mu_k(y) + \frac{1}{2d}\mu_k(x)  ,&\text{if  $ y\sim x    $}, \\
 \mu_k(y), &\text{otherwise},    \end{cases}
\end{equation}
and $u_{k+1} (y) = u_k(y) + \mu_k(y) \delta_0(y-x)$, $y\in \Z^d$,
where $\delta_0$ is the Kronecker delta symbol at the origin,
i.e. $\delta_0(x)$ equals 1 if $x=0$ and is zero otherwise for $x\in \Z^d$.
We call $u_k$ the \emph{odometer} function at time $k$.
For the sake of convenience, we do allow the toppling to be applied to a stable site,
as an identity operator, i.e. if at time $k$ a toppling is applied to a stable site $x$,
then we set $V_{k+1}= V_k$, $u_{k+1}=u_k$ and $\mu_{k+1}=\mu_k$.
We say that toppling $x$ is \emph{legal}, if $x$ is unstable.
If for some $k$ there are no unstable sites, the process is terminated.
We call this model \textbf{boundary sandpile} ($\mathrm{BS}$) and denote by $\mathrm{BS}(\mu_0, \CAP)$,
where $\mu_0$ is the initial distribution, and $\CAP$ is the boundary capacity of the model. 

It is clear that the triple $(V_k, \mu_k, u_k)_{k=1}^\infty$ may depend on the choice of the unstable sites,
i.e. the toppling sequence. 
Later on we will see that for a suitable class of toppling sequences
stable configurations exist and are identical (see Propositions \ref{Prop-stabil-toppling} and \ref{Prop-Abelian}).
Observe that from the definition of discrete Laplacian and (\ref{mu-k+1-def}) we easily see that
for each $k\geq 0$ one has 
\begin{equation}\label{Laplace-of-odometer}
 \Delta u_k (x) = \mu_k(x) - \mu_0(x) , \qquad x\in \Z^d,
\end{equation}
i.e. the Laplacian of $u_k$ represents the net gain of mass for a site $x$ at time $k$.

We further define a few concepts needed for our analysis. 
For a $\mathrm{BS}(\mu_0, \CAP)$, and a (toppling) sequence $T=\{x^{(k)}\}_{k=1}^\infty \subset \Z^d$,
we say that $T$ is \textbf{stabilizing}
if there exists a distribution $\mu$ such that
$\mu_k(x) \to \mu(x) $ as $k\to \infty$ for any $x\in \Z^d$.
We call $T$ \textbf{infinitive}
on a set $V\subset \Z^d$ if every $x\in V$ appears in the sequence $T$ infinitely often. 
If the set on which $T$ is infinitive is not specified, then it is assumed to be the entire $\Z^d$.

\subsection{Main results}
Our main results concern general qualitative  analysis of the boundary sandpile model, introduced in this paper. 
For any initial mass distribution we prove well-posedness of the model (Propositions \ref{Prop-stabil-toppling} and \ref{Prop-Abelian}),
and canonical representation of the model in terms of the smallest super-solution among a certain class of
functions (Theorem \ref{Thm-canonical}). 
Specifying our analysis for point masses and
using this minimality of the model, we show
directional monotonicity of the odometer function (Theorem \ref{Thm-monotonicity}).
We then determine the reasonable size of the boundary capacity
(subsection \ref{sub-sec-Heuristics}) and estimate the growth rate of the model in Lemma \ref{Lem-balls-inside-out}.
We prove a uniform Lipschitz bound for the scaled odometer function in Proposition \ref{Lem-Lipschitz-est}.
Using these results obtained in discrete setting, in the final part of the paper, Section \ref{sec-Shape-analysis}
we study the scaling limit of the model in continuum space.
In particular, we show that (along a subsequence) the scaled odometers, and hence the visited sites, converge to a continuum limit (Theorem \ref{Thm-scaling-limit} parts (i) and (iii)).
We also prove that the free boundary of (any) scaling limit of the model is locally a Lipschitz graph (Theorem \ref{Thm-scaling-limit} part (v)).

In subsection \ref{subsec-ASM} we apply our methods
developed for boundary sandpile model to classical Abelian sandpile.
Most notably, we show that the boundary of the scaling limit of Abelian sandpile
corresponding to initial configuration of chips at a single vertex, has Lipschitz boundary.

\begin{remark}\label{Rem-general-F}  
It is worthwhile to remark that the boundary sandpile described above can be represented slightly differently,
using discrete partial derivatives. Indeed, our model generates a set $V\subset \Z^d$ whose Green's function $u$
(corresponding to the given initial distribution of mass) satisfies
$$
\frac{1}{2d} \sum_{i=1}^d \left(|\partial_i^+  u   (x)| + |\partial_i^-  u   (x)| \right) - \kappa_0 \leq 0  \ \  {\normalfont{\text{ on }}} \partial V 
$$
for a given $\kappa_0$, where $\partial^{\pm}_i u(x) = u(x \pm e_i) - u(x)$. In light of this observation one may consider  a wider class of problems in terms of general 
prescribed boundary mass given by $F(x, \partial_1^{\pm} u , ... , \partial_d^{\pm}  u)$, where $F$ defined  on $\Z^d \times \R_+^{2d}$
has to satisfy some properties (ellipticity and other), yet to be found. 
The methods developed in this paper seem to have a good chance to go through for at least ``nice enough" function $F$.
This aspect we shall leave for interested reader to explore.
\end{remark}

\section{Well-posedness of the model}

In this section we prove two basic properties of the boundary sandpile model.
Namely, that the model needs to visit a finite number of sites in $\Z^d$ to reach a stable state,
and that the final stable configuration is independent of the toppling sequence.
For the proofs we will use some ideas from \cite[Lemma 3.1]{Lev-Per} and 
\cite[Lemma 3.1]{Lev-Per10}.

\begin{prop}\label{Prop-stabil-toppling}\normalfont{(Existence of odometer function)}
For a given $\mathrm{BS}(\mu_0, \CAP)$ any infinitive toppling sequence
$T$ is stabilizing.
\end{prop}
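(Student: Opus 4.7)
The plan is to prove stabilization via two observations: pointwise monotonicity of the odometer $u_k$ in $k$, and a uniform pointwise upper bound $u_k \leq \Phi$ for some fixed non-negative $\Phi$ of finite support. Together these yield $u_k \nearrow u_\infty < \infty$ pointwise, and since the discrete Laplacian is a $(2d{+}1)$-point local operator, one passes to the limit in the identity $\mu_k = \mu_0 + \Delta u_k$ to conclude $\mu_k \to \mu := \mu_0 + \Delta u_\infty$ pointwise, which is exactly the stabilization claim.

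Monotonicity is built into the model: since $u_{k+1}(y) = u_k(y) + \mu_k(y)\delta_0(y - x^{(k)})$ and $\mu_k \geq 0$, we have $u_{k+1} \geq u_k$ everywhere, so $u_\infty(x) := \sup_k u_k(x) \in [0,\infty]$ exists for every $x \in \Z^d$.

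For the upper bound I would fix $R$ large enough so that $\supp \mu_0 \subset \INT Z_R$ and so that the unique non-negative solution $\Phi$ of $\Delta \Phi = -\mu_0$ on $\INT Z_R$ with $\Phi \equiv 0$ on $\Z^d \setminus \INT Z_R$, i.e.\ the discrete Dirichlet potential of $\mu_0$ in $\INT Z_R$, additionally satisfies $g(x) := \Delta \Phi(x) \leq \CAP$ at every $x \in \partial Z_R$; this is feasible because $\max_{\partial Z_R} g$ tends to zero as $R \to \infty$ by standard decay estimates on the discrete Green's function. The comparison $u_k \leq \Phi$ together with the invariant $V_k \subseteq Z_R$ is then proved by induction on $k$. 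The key point is that $\Phi \equiv 0$ on $\partial Z_R$, so under the induction hypothesis $u_k \equiv 0$ on $\partial Z_R$, and hence for any $x \in \partial Z_R \cap V_k$ (automatically in $\partial V_k$, since $x$ has a neighbor outside $Z_R \supseteq V_k$)
$$\mu_k(x) = \mu_0(x) + \Delta u_k(x) = \tfrac{1}{2d}\sum_{y \sim x} u_k(y) \leq \tfrac{1}{2d}\sum_{y \sim x} \Phi(y) = \Delta \Phi(x) = g(x) \leq \CAP,$$
so $x$ is stable and cannot topple legally. Hence a legal toppling can only occur at some $x \in \INT Z_R$, where $g(x) = 0$ and $\Delta \Phi(x) = -\mu_0(x)$; the identity
$$u_{k+1}(x) = u_k(x) + \mu_k(x) = \mu_0(x) + \tfrac{1}{2d}\sum_{y \sim x} u_k(y) \leq \mu_0(x) + \Phi(x) + \Delta \Phi(x) = \Phi(x)$$
closes the inductive step, while $V_{k+1} \subseteq V_k \cup \{y : y \sim x\} \subseteq Z_R$ follows from $x \in \INT Z_R$.

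The principal obstacle is the quantitative estimate ensuring such an $R$ exists (so that $g \leq \CAP$ on $\partial Z_R$) together with the careful simultaneous bookkeeping of the two invariants $u_k \leq \Phi$ and $V_k \subseteq Z_R$ across the induction, both of which are flavours of discrete potential-theoretic control. The fact that $T$ is \emph{infinitive} plays no role in the upper bound itself but is what ensures that no mass is left behind at an unstable interior site in the limit, so that the limiting configuration is genuinely a stable one; this is where the infinitive hypothesis enters. Once the comparison $u_k \leq \Phi$ is in place, the rest of the argument is a routine appeal to monotone convergence combined with the locality of $\Delta$.
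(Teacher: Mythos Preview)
Your argument is correct and takes a genuinely different route from the paper. The paper argues combinatorially: any newly created boundary point of $V_k$ must carry mass exceeding $\CAP/(2d)$, which caps $|\partial V_k \setminus V_0|$ by $2dn/\CAP$; a hyperplane-scanning argument then bounds the diameter of $V_k$ uniformly in $k$, the infinitive hypothesis forces $V_k$ to eventually freeze to some fixed $V$, and boundedness of $u_k$ is propagated inward from the now-stable $\partial V$ using $\Delta u_k\le\CAP$ there. You instead construct an explicit barrier: your pair $(Z_R,\Phi)$ is precisely a \emph{stabilizing pair} in the sense the paper introduces later, so your induction is an early, direct instance of the comparison underlying the minimality principle (Theorem~\ref{Thm-canonical}). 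Your route buys a bit more---the bound $u_k\le\Phi$ and the inclusion $V_k\subseteq Z_R$ hold for \emph{any} toppling sequence, so convergence of $\mu_k$ (the literal content of ``stabilizing'') needs no infinitive hypothesis at all, which as you correctly note enters only if one further wants the limit $\mu$ to be stable (something the paper's proof does go on to establish, though formally beyond the proposition's statement). The price is that the existence of a suitable $R$ rests on the boundary decay $\max_{x\in\partial Z_R}\Delta G_R(x)=O(R^{1-d})$, essentially Lemma~\ref{Lem-Greens-Laplace}, which the paper proves only later; the paper's combinatorial argument is entirely self-contained at this stage of the text.
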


\begin{proof}
Let $n:=\sum_{x\in \Z^d} \mu_0(x)$ be the total mass of the sandpile, and set
$V_0 = \supp \mu_0$. For each $k\in \N$
let $V_k$ be the set of visited sites after invoking $k$-th toppling in $T$.
Let also $u_k$ and $\mu_k$ be respectively the odometer function and the distribution of mass after 
the $k$-th site in $T$ has toppled.

We first show that for any $k\in \N$ and for each $x\in \partial V_k$ satisfying
$\mu_0(x) = 0$ one has 
\begin{equation}\label{mu-k-from-below}
\mu_k(x) >\frac{1}{2d} \kappa_0. 
\end{equation}
Indeed, let $1\leq i \leq k$ be the smallest integer such that $x\in V_i \setminus V_{i-1}$,
and let $y\in V_{i-1}$ be the neighbour of $x$ that topples at time $i$.
Since $y\in V_{i-1}$ and $y\sim x \notin V_{i-1}$ we have $y\in \partial V_{i-1}$,
but as toppling of $y$ is legal (since it is producing a previously unvisited site $x$), we must have $\mu_{i-1}(y) > \kappa_0$.
Consequently we get $\mu_i(x) >\frac{1}{2d} \kappa_0$,
and since $x$ did not topple at steps $i+1,...,k$, as otherwise $x$
will be in the interior of $V_k$, we get $\mu_k(x)=\mu_i(x)$
and hence (\ref{mu-k-from-below}).
Comparing (\ref{mu-k-from-below}) with the total mass $n$, for any $k\geq 0$ we get 
\begin{equation}\label{bound-on-bdry-points}
| \partial V_k \setminus V_0 | \leq 2d \frac{n}{\kappa_0},
\end{equation}
as an upper bound for the number of newly emerged\footnote{Here one can only estimate the
 number of boundary points of the visited sites which were not initially in the support of $\mu_0$.
 The simple reason is that in general $\mu_0$ may have, say, arbitrarily large number of isolated points with small mass,
 which may never be visited again in the course of the process.} boundary points
of visited sites at any time of the process. 

Now assume that $V_k$ is connected as a lattice graph. Fix $1\leq \alpha \leq d$
and let $a, b \in \partial V_k$ be such that
$a_\alpha  = \min\limits_{x\in \partial V_k} x_\alpha $ and
$b_\alpha  = \max\limits_{x\in \partial V_k} x_\alpha $. 
For each integer $a_\alpha \leq c \leq b_\alpha$ consider the hyperplane $H_c=\{x\in\R^d: \ x_\alpha  = c \}$.
It is clear that $H_c \cap V_k$ contains at least one element of $\partial V_k$
and these points are pairwise different for integer $c\in [a_\alpha, b_\alpha]$.
By (\ref{bound-on-bdry-points}) one has
$$
b_\alpha - a_\alpha \leq 2d \frac{n}{\kappa_0} + |V_0|.
$$
As $1\leq \alpha \leq d$ is arbitrary, for any $x\in V_k$
we obtain 
\begin{equation}\label{vis-sites-bound}
\max\limits_{1\leq \alpha \leq d } \max\limits_{y\in V_0} |(x-y)_{\alpha}|  \leq 2d \frac{n}{\kappa_0} + |V_0|,
\end{equation}
hence the process occupies a finite region of $\Z^d$
independently of\footnote{Later on in Lemma \ref{Lem-balls-inside-out} we will obtain
more precise bounds for the set of visited sites. However, the bound in Lemma \ref{Lem-balls-inside-out}
uses a priori boundedness of the set of visited sites.} $k$.
It is clear that assuming connectedness of $V_k$ resulted in no loss of generality,
as we can employ the scanning procedure by hyperplanes on each connected component
of the lattice graph of $V_k$.

Since the toppling is infinitive, for each $k\geq 1$ all unstable sites on $\partial V_k$
will topple eventually, by so extending the set of visited sites.
But (\ref{vis-sites-bound}) shows that after finite number of topplings,
the boundary of visited sites will become stable and will remain so throughout.
Precisely, there is $k\geq 1$ and a set $V\subset \Z^d$
such that $V_i  = V$ for all $i\geq k$.
By (\ref{Laplace-of-odometer}) for any integer $k\geq 0$  we have
\begin{equation}\label{u-k-Laplacian}
\Delta u_k = \mu_k - \mu_0 \text{ in } \Z^d.
\end{equation}
The \emph{r.h.s.} of the last expression, as well as each $u_k$ are supported in $V$
for all integer $k\geq 0$. We thus have that the functions $x \mapsto \Delta u_k(x)$, independently of $k\in \N$,
are uniformly bounded on $\Z^d$, and have compact supports.
Take any $x\in \partial V$, then $u_k(x)=0$ and $\mu_k(x) \leq \CAP$ in view of the stability of $V$. Hence
$$
\frac{1}{2d} \sum\limits_{y\sim x} u_k(y) = \Delta u_k(x) =\mu_k(x) - \mu_0(x) \leq \CAP,
$$
which shows that $\{u_k(y)\}_{k=1}^\infty$ is uniformly bounded for any $y\sim x$.
This uniform bound on $u_k$ propagates into each connected component of the lattice graph of $V$.
Hence we get $u_k(x) \leq C$ for any $x\in \Z^d$ and any $k\geq 1$ with some constant $C=C(d,n)$.

Since for each $x\in \Z^d$, the sequence $\{u_k(x)\}_{k=1}^\infty$ is increasing and bounded above, it has a finite limit
which we denote by $u(x)$. Clearly $u$ is non-negative and compactly supported.
It also follows that the \emph{l.h.s.} of (\ref{u-k-Laplacian}) must have a limit everywhere in $\Z^d$.
Passing to the limit in (\ref{u-k-Laplacian}) as $k$ goes to infinity, 
we get $\Delta u = \mu - \mu_0 $ in $\Z^d$ where by $\mu$ we denote the limit distribution of $\mu_k$.

To complete the proof it is left to observe that the final distribution $\mu$
is stable. As we saw above, $\mu$ is stable on $\partial V$, hence it remains to show that
$\mu=0$ in the interior of $V$. Fix any $x\in \INT{V}$, since $T$
is infinitive we get $\mu_k(x) =0$ for infinitely many $k$. But as $\mu(x)  = \lim\limits_{k\to \infty}\mu_k(x)$
and the limit exists, we obtain $\mu(x) =0$ completing the proof of the proposition.
\end{proof}

\begin{remark}
It is clear from the proof of Proposition \ref{Prop-stabil-toppling}
that one can require the toppling to be infinitive only on the set of visited sites.
We do not do so as the final domain is not known a priori.
\end{remark}

We call the function $u$ constructed in the proof of Proposition \ref{Prop-stabil-toppling}
the \emph{odometer} and the distribution $\mu$ the \emph{final configuration}
corresponding to the given toppling sequence.

\begin{prop}{\normalfont{(Abelian property)}}\label{Prop-Abelian}
For a given $\mathrm{BS}(\mu_0, \CAP)$ and any two infinitive toppling sequences $T_1$ and $T_2$, one has
$u_1=u_2$ for the corresponding odometer functions.
\end{prop}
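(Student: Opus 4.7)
The plan is to establish a ``least action principle'' in the spirit of Levine--Peres (\cite{Lev-Per}, \cite{Lev-Per10}): for $i=1,2$ let $u^{(i)} := \lim_{k\to\infty} u_k^{(i)}$ and $\mu^{(i)} := \lim_{k\to\infty} \mu_k^{(i)}$, which exist by Proposition \ref{Prop-stabil-toppling}, and let $V^{(i)}$ denote the associated final visited set. The aim is to prove the pointwise inequality $u^{(1)} \le u^{(2)}$; the reverse inequality then follows by exchanging the roles of $T_1$ and $T_2$, yielding $u^{(1)} \equiv u^{(2)}$.

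I would prove, by induction on $k$, the stronger statement $u_k^{(1)} \le u^{(2)}$ on $\Z^d$. The base case $k=0$ is trivial. For the inductive step, let $x$ be the site toppled (legally) at step $k$ in $T_1$, and set $v := u^{(2)} - u_{k-1}^{(1)} \ge 0$. Subtracting \eqref{Laplace-of-odometer} applied to $u_{k-1}^{(1)}$ from the same identity applied to $u^{(2)}$ yields
\[
v(x) \;=\; \tfrac{1}{2d}\sum_{y\sim x} v(y) \;+\; \mu_{k-1}^{(1)}(x) \;-\; \mu^{(2)}(x),
\]
so the target inequality $u_k^{(1)}(x) = u_{k-1}^{(1)}(x) + \mu_{k-1}^{(1)}(x) \le u^{(2)}(x)$ reduces to
\[
\tfrac{1}{2d}\sum_{y\sim x} v(y) \;\ge\; \mu^{(2)}(x).
\]

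I would split into cases by the position of $x$ relative to $V^{(2)}$. If $x \in \INT V^{(2)}$, then $\mu^{(2)}(x)=0$ and the inequality follows from $v\ge 0$. If $x \notin V^{(2)}$, then for every neighbour $y \sim x$ one has $u^{(2)}(y)=0$ (else $y$ topples in $T_2$ and drags $x$ into $V^{(2)}$); the inductive bound forces $u_{k-1}^{(1)}(y)=0$, so $\mu_{k-1}^{(1)}(x)=\mu_0(x)$, and the legality of the toppling at $x$ then forces $\mu_0(x)>0$, whence $x\in V_0 \subseteq V^{(2)}$, a contradiction. The remaining boundary case $x \in \partial V^{(2)}$ with $\mu^{(2)}(x)\le \kappa_0$ is the main obstacle: here $v\ge 0$ alone gives only $\tfrac{1}{2d}\sum v(y)\ge 0$, which is insufficient. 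To close it, one exploits that $V^{(2)}$ is the final visited set of an infinitive stabilizing sequence, so the Laplacian balance $\mu^{(2)}(x)-\mu_0(x) = \tfrac{1}{2d}\sum_{y\sim x} u^{(2)}(y) - u^{(2)}(x)$ provides a reservoir of neighbour-odometer values; matching this reservoir against the inductive bound $u_{k-1}^{(1)}(y)\le u^{(2)}(y)$ on neighbours (and using that neighbours $z$ of $x$ lying outside $V^{(2)}$ satisfy $u^{(2)}(z)=u_{k-1}^{(1)}(z)=0$) yields the required averaged inequality. Once the least action principle $u_k^{(1)}\le u^{(2)}$ is established, sending $k\to\infty$ gives $u^{(1)}\le u^{(2)}$, and symmetry completes the proof.
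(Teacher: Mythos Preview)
Your overall architecture---prove $u_k^{(1)}\le u^{(2)}$ by induction on $k$ and then invoke symmetry---matches the paper's, and your reductions in the interior case and the ``outside $V^{(2)}$'' case are correct. The gap is in the boundary case $x\in\partial V^{(2)}$, and it is not a matter of filling in details: the inequality you are trying to prove there is \emph{false}.

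Indeed, with $x\in\partial V^{(2)}$ you have $u^{(2)}(x)=0$, hence (by induction) $u_{k-1}^{(1)}(x)=0$, so
\[
\tfrac{1}{2d}\sum_{y\sim x} v(y)\;=\;\tfrac{1}{2d}\sum_{y\sim x} u^{(2)}(y)\;-\;\tfrac{1}{2d}\sum_{y\sim x} u_{k-1}^{(1)}(y)
\;=\;\bigl(\mu^{(2)}(x)-\mu_0(x)\bigr)\;-\;\bigl(\mu_{k-1}^{(1)}(x)-\mu_0(x)\bigr)
\;=\;\mu^{(2)}(x)-\mu_{k-1}^{(1)}(x).
\]
Thus your target $\tfrac{1}{2d}\sum v(y)\ge \mu^{(2)}(x)$ is equivalent to $\mu_{k-1}^{(1)}(x)\le 0$, which contradicts the legality of the toppling. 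No ``reservoir matching'' can rescue this: you are attempting to prove a statement that is literally false whenever this case is reached.

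The correct resolution---and this is exactly what the paper does---is to show that the boundary case \emph{cannot occur}: if the inductive hypothesis holds up to time $k-1$ and $x^{(k)}$ is legally toppled in $T_1$ at time $k$, then $x^{(k)}\notin\partial V^{(2)}$. This requires splitting according to whether $x^{(k)}$ lies on $\partial V_{k-1}^{(1)}$ (where stability of $\mu^{(2)}$ on $\partial V^{(2)}$ forces $\kappa_0<\kappa_0$) or in $\INT{V}_{k-1}^{(1)}$ (where one tracks the first time $T_1$ produced a vertex outside $V^{(2)}$ and uses the inductive bound to forbid that toppling). Your proposal is missing precisely this contradiction argument.
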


\begin{proof}
As in Proposition \ref{Prop-stabil-toppling} by $n$ denote the total mass of the system.
In view of Proposition \ref{Prop-stabil-toppling} each $T_i$
is stabilizing and has well-defined odometer function, call it $u_i$.
We will prove that $u_2(x) \geq u_1(x)$ for any
$x\in \Z^d$, which by symmetry will imply the desired result.

For $i=1,2$ by $u_{i,k}$ denote the odometer function and by $\mu_{i,k}$ the distribution 
corresponding to the sequence $T_i$ after the $k$-th toppling occurs.
Let $T_1= \{x^{(k)}\}_{k=1}^\infty$,
we now show that 
\begin{equation}\label{ineq-abelian}
u_2(x^{(k)}) \geq u_{1,k} (x^{(k)}), \qquad k=1,2,... \ . 
\end{equation}
Observe that (\ref{ineq-abelian}) is enough for our purpose
since the site $x^{(k)}$ appears infinitely often in $T_1$ and $u_{1,k} $ converges to $u_1$ pointwise as
$k$ tends to infinity. 
Thus, in what follows we prove (\ref{ineq-abelian}) which we will do by induction on $k$.

When $k=1$, (\ref{ineq-abelian}) is trivially true. Now assume that it holds for any time $1\leq i<k$.
We divide the analysis into two cases.

\vspace{0.2cm}

\noindent \textbf{Case 1.} \emph{Toppling $x^{(k)}$ in $T_1$ at time $k$ is not a legal move}.
\vspace{0.1cm}

In this case we have
\begin{equation}\label{non-legal}
u_{1,k}(x^{(k)}) = u_{1,k-1}(x^{(k)}).
\end{equation}
So, if $x^{(k)}$ has never toppled in $T_1$ prior to time $k$, then $u_{1,k-1}(x^{(k)})=0$
and we are done. Otherwise, let $i\leq k-1$ be the last time $x^{(k)}$
has toppled in $T_1$. We thus have $x^{(i)} =x^{(k)} $,
hence, using the inductive hypothesis, we conclude
$$
u_2(x^{(k)} ) =u_2(x^{(i)}) \geq u_{1,i}(x^{(i)}) = u_{1,k-1}(x^{(i)}) =  u_{1,k}(x^{(k)}),
$$
where the last equality follows from (\ref{non-legal}).
This completes the induction step for the case of a non-legal move.

\vspace{0.2cm}

\noindent \textbf{Case 2.} \emph{Toppling $x^{(k)}$ in $T_1$ at time $k$ is legal}.
\vspace{0.1cm}

We first show that for any $x\neq x^{(k)}$  one has
\begin{equation}\label{u2-geq-u1k}
u_2(x)\geq u_{1,k} (x).
\end{equation}
There are two possible sub-cases, either $x$ was never toppled in $T_1$ up to time $k$,
which implies $u_{1,k} (x)=0$ and we get (\ref{u2-geq-u1k}),
or  $x$ was toppled at some time before $k$.
In the latter case let $i\leq k$ be the last time $x$ has toppled prior to time $k$.
Observe that $i<k$ since $x\neq x^{(k)}$. Then we have
$$
u_{1,k} (x) = u_{1,i}(x) \leq u_2(x),
$$
where the second inequality follows by inductive hypothesis.
We thus have proved (\ref{u2-geq-u1k}) for all $x\neq x^{(k)}$.
Consider the following inequality
\begin{equation}\label{mu2-dist-vs-mu1k}
\mu_2(x^{(k)}) \leq \mu_{1,k} (x^{(k)}).
\end{equation}
To complete the induction step suppose for a moment that (\ref{mu2-dist-vs-mu1k}) holds
true. Then
\begin{multline*}
\frac{1}{2d} \sum\limits_{y \sim x^{(k)} } u_2(y)  - u_2(x^{(k)} ) = 
  \Delta u_2( x^{(k)} ) =\mu_2(x^{(k)}) - \mu_0(x^{(k)}) \stackrel{ (\ref{mu2-dist-vs-mu1k}) }{\leq} \\ 
  \mu_{1,k} (x^{(k)}) - \mu_0(x^{(k)}) = 
  \Delta u_{1,k}(x^{(k)}) =\frac{1}{2d} \sum\limits_{y \sim x^{(k)} } u_{1,k}(y)  - u_{1,k}( x^{(k)} ).
\end{multline*}
Rearranging the first and the last terms leads to
$$
u_2( x^{(k)} ) - u_{1,k}( x^{(k)} ) \geq \frac{1}{2d}\sum\limits_{y \sim x^{(k)} } \left( u_2(y) - u_{1,k} (y) \right) \geq 0,
$$
where the last inequality is due to (\ref{u2-geq-u1k}). This completes the induction.
Thereby, to finish the proof of the proposition, we need to verify (\ref{mu2-dist-vs-mu1k})
which we do next. 

To prove (\ref{mu2-dist-vs-mu1k}) we will assume that $x^{(k)}$ is on the boundary of the visited sites generated by $T_2$,
call it $V_2 \subset \Z^d$,
as otherwise the \emph{l.h.s.} of (\ref{mu2-dist-vs-mu1k}) vanishes and the inequality is trivial.
Recall, that we are in Case 2, hence there are two possible reasons for $x^{(k)}$ to topple in $T_1$ at time $k$. Namely,
\begin{itemize}
 \item[(a)] $x^{(k)}$ was on the boundary of visited sites of $T_1$ at time $k-1$ and had mass strictly greater than $\CAP$,
 \item[(b)] $x^{(k)}$ was in the interior of visited sites of $T_1$ at time $k-1$ and had positive mass.
\end{itemize}
We show that either of the cases leads to a contradiction.
Assume (a), then
\begin{align*}
 \CAP < \mu_{1,k-1}( x^{(k)} ) &= \Delta u_{1, k-1} (x^{(k)}) - \mu_0 ( x^{(k)} )   \\
  &= \frac{1}{2d} \sum\limits_{y\sim x^{(k)}} u_{1,k-1}(y) - \mu_0 (x^{(k)})  \ \ \  ( \text{since  } u_{1,k-1} ( x^{(k)} ) =0 )  \\
  &\leq \frac{1}{2d} \sum\limits_{y\sim x^{(k)}} u_2 (y) - \mu_0 (x^{(k)})  \ \ \ \ \ \ \  \left( \text{from } (\ref{u2-geq-u1k}) \right) \\
  &= \Delta u_2(x^{(k)}) - \mu_0(x^{(k)}) \ \ \ \qquad \  \ \ \ \  ( \text{since }  u_2(x^{(k)})=0 ) \\
  &= \mu_2( x^{(k)} ) - \mu_0(x^{ (k) }) \leq \CAP \ \ \ \ \ \ \ \ \ \left( \text{as } T_2 \text{ is stabilizing}  \right),
\end{align*}
which is a contradiction.

Now consider the case when $x^{(k)}$ topples according to (b).
Recall that $x^{(k)} \in \partial V_2$.
This implies that $x^{(k)}$ cannot topple in $T_1$ prior to time $k$.
Indeed, if $x^{(k)}$ topples for some $i<k$, then $u_{1,i}(x^{(k)})>0$, but by inductive hypothesis we have
$0=u_2(x^{(k)})\geq u_{1,i} (x^{(k)}) >0$ which is false.
Next let $i<k$ be the first time when the toppling of $T_1$ produces a vertex outside $V_2$,
and let $y$ be the site that topples at time $i$. On one hand $y$ has to be on the boundary of the set visited by $T_2$,
on the other hand, as we saw already, $y$ must be different from $x^{(k)}$.
Hence by (\ref{u2-geq-u1k}) and the fact that $y\in \partial V_2$ we get $u_{1,i} (y)\leq u_{1,k}(y)=0$ meaning that
$y$ cannot topple. We arrive at a contradiction,
hence (\ref{mu2-dist-vs-mu1k}) is proved, and so is the proposition.
\end{proof}

A simple consequence of the abelian property is the following
lattice symmetries of the sandpile.
Consider the set of directions
\begin{equation}\label{dir-N}
\mathcal{N}=\{e_i, e_i+e_j, e_i - e_j, \ 1\leq i\neq j \leq d\} \subset \Z^d,
\end{equation}
and for a given $e\in \mathcal{N}$ let $T_e$ be the 
hyperplane through the origin with a normal vector collinear with $e$.
Then, $T_e$ is a mirror symmetry hyperplane of the unit cube $[-1/2, 1/2]^d \subset \R^d$,
and any symmetry hyperplane of this cube is of the form $T_e$ for some $e\in \mathcal{N}$.
Mirror reflections with respect to a hyperplane $T_e$ preserve the lattice $\Z^d$.
These symmetries are inherited by sandpile with single source as we see next.

\begin{cor}\label{cor-O-symm}{\normalfont{(lattice symmetries of the model)}}
Let the initial distribution $\mu_0$ be a point mass supported at the origin,
and let $V$ be its final domain of visited sites, and $u$ be the odometer function. 
Then both $V$ and $u$ are symmetric with respect to any hyperplane $T_e$
where $e\in \mathcal{N}$.
\end{cor}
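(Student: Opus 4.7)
The plan is to exploit the Abelian property (Proposition \ref{Prop-Abelian}) together with the fact that each reflection $R = R_e$, $e \in \mathcal{N}$, is a lattice isometry of $\Z^d$ that fixes the origin and preserves the neighbour relation: $x \sim y$ if and only if $R(x) \sim R(y)$. These properties of $\mathcal{N}$ are exactly what guarantees that $R$ preserves $\Z^d$ and its graph structure. Crucially, since $\mu_0 = c\,\delta_0$ for some $c>0$ and $R(0) = 0$, we have $\mu_0 \circ R = \mu_0$.

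Fix $e \in \mathcal{N}$, put $R = R_e$, and let $T = \{x^{(k)}\}_{k=1}^\infty$ be any infinitive toppling sequence (guaranteed to exist by Proposition \ref{Prop-stabil-toppling}). I would form the reflected sequence $T' := \{R(x^{(k)})\}_{k=1}^\infty$, which is still infinitive because $R$ is a bijection of $\Z^d$. Writing $(V_k,\mu_k,u_k)$ and $(V_k',\mu_k',u_k')$ for the states produced by $T$ and $T'$ respectively, I would prove by induction on $k$ that
\[
V_k' \;=\; R(V_k), \qquad \mu_k' \;=\; \mu_k \circ R, \qquad u_k' \;=\; u_k \circ R.
\]
The base case $k=0$ is immediate from the $R$-invariance of $\mu_0$ and $u_0 = 0$. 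For the inductive step, the hypothesis implies that $x^{(k+1)}$ is unstable in $(V_k,\mu_k)$ exactly when $R(x^{(k+1)})$ is unstable in $(V_k',\mu_k')$, since unstability at a site depends only on the mass there and on whether it lies in the interior or boundary of the visited set, and both quantities are preserved by $R$ (using that $\partial R(V_k) = R(\partial V_k)$ and $\INT{R(V_k)} = R(\INT{V_k})$, which follows from $R$ preserving adjacency). The redistribution rule in \eqref{mu-k+1-def} is also preserved, because $R$ permutes the $2d$ lattice neighbours of $x^{(k+1)}$ bijectively onto those of $R(x^{(k+1)})$; hence the updated states satisfy the same relations at step $k+1$.

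Passing to the limit $k \to \infty$, the final odometer $u'$ of $T'$ satisfies $u'(x) = u(R(x))$ for every $x \in \Z^d$, where $u$ is the odometer produced by $T$. By the Abelian property (Proposition \ref{Prop-Abelian}), any two infinitive toppling sequences yield the same odometer, so $u' = u$, giving $u(R(x)) = u(x)$ for all $x$. For the visited set, one characterises $V$ in terms of $u$ and $\mu_0$, namely $V = \supp \mu_0 \cup \{x : u(x) > 0\} \cup \{x : \exists\, y \sim x \text{ with } u(y)>0\}$; since each piece is $R$-symmetric (using again that $R$ preserves adjacency), so is $V$. The only real obstacle is the inductive bookkeeping in checking that legality and the toppling update commute with $R$; once that is verified, the Abelian property does all the work.
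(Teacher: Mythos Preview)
Your argument is correct. Both your proof and the paper's rely on the Abelian property (Proposition~\ref{Prop-Abelian}) together with the fact that each reflection $R_e$, $e\in\mathcal{N}$, is a lattice automorphism fixing the origin, but the way this is packaged differs. The paper \emph{symmetrizes} a given infinitive toppling sequence $T=\{x^{(k)}\}$ into $[T]=\{[x^{(k)}]\}$, where $[x]=\{x,x_e\}$, and observes that since $x$ and $x_e$ are never lattice neighbours the two topplings within a pair commute; hence the partial odometer after each pair is already $R_e$-symmetric, and the symmetry passes to the limit. You instead \emph{reflect} the sequence to $T'=\{R(x^{(k)})\}$ and prove by induction the equivariance $(V_k',\mu_k',u_k')=(R(V_k),\mu_k\circ R,u_k\circ R)$, concluding $u'=u\circ R$ and then invoking the Abelian property to get $u'=u$. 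Your route is slightly more bookkeeping (the inductive step) but avoids the non-adjacency observation; the paper's route is one line shorter once that observation is made. Your final step---recovering the symmetry of $V$ from that of $u$ via $V=\supp\mu_0\cup\{u>0\}\cup\{x:\exists\,y\sim x,\ u(y)>0\}$---is a useful explicit point, since Proposition~\ref{Prop-Abelian} as stated only asserts equality of odometers, not of visited sets.
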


\begin{proof}
Fix $e\in \mathcal{N}$ and for each $x\in \Z^d$ denote $[x] = \{ x, x_e \}$,
where $x_e \in \mathbb{Z}^d$ is the mirror reflection of $x$ with respect
to the hyperplane $T_e$.
Take any infinitive toppling $T=\{x^{(k)}\}_{k=1}^\infty$ for $\mu_0$ and 
consider its ``symmetrized" version $ [T] =\{ [x^{(k)}]  \}_{k=1}^\infty$,
where each $x^{(k)}$ in $T$ is replaced by its corresponding 2-element class $[x]$.
Clearly the new toppling sequence $[T]$ is infinitive, and hence stabilizing in view of Proposition \ref{Prop-stabil-toppling}.
Notice that since $x$ and $x_e$ are not lattice neighbours, then the topplings of $x$ 
and $x_e$ do not interfere, and can be toppled in any order.
Also, if $u_k$ is the odometer function after toppling both elements of $[x^{(k)}]$ in $[T]$,
then it is symmetric with respect to hyperplane $T_e$ by construction, namely
$u_k(x)= u_k(x_e) $ for any $x\in \Z^d$. Since the odometer $u$ corresponding to $[T]$ is the pointwise limit
of $u_k$, we get the symmetry of the odometer and the set of visited sites with respect to $T_e$.

As $e\in \mathcal{N}$ was arbitrary, the abelian property of the sandpile completes the proof
of the corollary.
\end{proof}

\begin{remark}
One can see, in a similar way, that for any initial distribution $\mu_0$, its final configuration $V$ and odometer function $u$
inherit lattice symmetries of $\mu_0$. We do not formulate this rigorously, as later on we will be using the symmetry for point masses only.
\end{remark}

\section{Toppling procedure on graphs}

It is apparent that our boundary sandpile process requires infinite number of toppling steps
to reach a stable configuration (see Remark \ref{Rem-infinite-steps} below).
Nonetheless, it is not hard to observe that the process can be stabilized effectively 
in finite number of steps by absorbing part of the topplings into certain linear operators
acting on graphs. This fact might also be useful for numerical simulations.

Let $G=(V,E)$ be a finite connected, simple (i.e. without loops and multiple edges) graph.
Partition $V$ into two non-empty subsets $V_0$ and $V_1$ which we call \textbf{source} and \textbf{sink} respectively.
For a given function 
$\mu:V_0 \to \R_+$, in analogy with (\ref{mu-k+1-def}), consider the toppling transformation $T$ acting on a vertex $v\in V_0$ by
\begin{equation}\label{toppl-graph}
(T_v \mu)(u) = \begin{cases}  0, &\text{if $u=v$}, \\ 
 \mu(u) + \frac{\mu(v)}{\mathrm{deg}(v)} ,&\text{if  $ u\sim v    $}, \\
 \mu(u), &\text{otherwise},    \end{cases}
\end{equation}
where $\mathrm{deg}(v)$ is the degree of the vertex $v$ in $G$, which is always positive in view of the connectedness of the graph.
We also denote $\mathbb{E} [\mu] = \frac{1}{|V_0|}\sum\limits_{v\in V_0 } \mu(v) $ for the average
of $\mu$.
With these preparations we easily obtain the following.

\begin{lem}\label{Lem-Graph-balayage}
For any map $\sigma:\N \to V_0$ such that the pre-image of each $v\in V_0$ is infinite,
for all $v\in V_0$ we have 
$$
\lim\limits_{N\to \infty} (T_{\sigma(N)}\circ ... \circ T_{\sigma(1)} \mu)(v) = 0.
$$
\end{lem}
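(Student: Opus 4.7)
The plan is to introduce a strictly decreasing Lyapunov functional keyed to the expected hitting time of the sink $V_1$, extract a summability estimate for the toppled masses, and then upgrade the resulting subsequential decay to full decay via a short monotonicity argument between consecutive toppling times at a fixed vertex.

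First, let $g: V \to \R_+$ denote the expected hitting time of $V_1$ for the simple random walk on $G$. Since $G$ is connected and $V_1$ is non-empty, $g$ is well-defined and finite, vanishes on $V_1$, is strictly positive on $V_0$, and satisfies
$$
g(v) = 1 + \frac{1}{\mathrm{deg}(v)} \sum_{u \sim v} g(u), \qquad v \in V_0.
$$
Writing $\mu_N := T_{\sigma(N)} \circ \cdots \circ T_{\sigma(1)} \mu$ and setting
$$
\Phi(\mu_N) := \sum_{v \in V_0} \mu_N(v)\, g(v) \geq 0,
$$
a direct calculation from \eqref{toppl-graph} and the above recursion (using the convention that mass sent to neighbours in $V_1$ is absorbed and no longer tracked) yields the key identity
$$
\Phi(\mu_N) - \Phi(\mu_{N-1}) = -\mu_{N-1}(\sigma(N)) \leq 0.
$$

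Next, since $\Phi \geq 0$ and the increments are non-positive, the telescoping sum gives $\sum_{N \geq 1} \mu_{N-1}(\sigma(N)) \leq \Phi(\mu) < \infty$, so $\mu_{N-1}(\sigma(N)) \to 0$. The hypothesis that each $v \in V_0$ has an infinite pre-image under $\sigma$ then forces $\mu_{N-1}(v) \to 0$ along the subsequence $\{N: \sigma(N) = v\}$ for every fixed $v$.

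To upgrade this to convergence along the full sequence, I would fix $v \in V_0$ and let $t_1 < t_2 < \cdots$ enumerate $\sigma^{-1}(v)$. For any $N \in [t_k, t_{k+1})$ the map $N' \mapsto \mu_{N'}(v)$ is non-decreasing on that index range: $v$ is not toppled in the window, and each toppling of a neighbour of $v$ only contributes non-negative mass to $v$. Thus $\mu_N(v) \leq \mu_{t_{k+1}-1}(v) \to 0$ as $k \to \infty$, which is the desired conclusion. The only delicate point is the identification of $\Phi$; once the hitting time $g$ is chosen as the test function, the decay identity $\Phi(T_v\mu) - \Phi(\mu) = -\mu(v)$ drops out of its defining recursion, and the rest is elementary bookkeeping.
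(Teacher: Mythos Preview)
Your proof is correct and takes a genuinely different route from the paper. The paper's argument is combinatorial: it locates the vertex $v\in V_0$ where the current mass is maximal, traces a shortest path from $v$ to the sink $V_1$ through $V_0$, and observes that once every vertex along that path has been toppled (which happens in finite time since $\sigma$ is infinitive), at least a $1/d_\ast^{\rho}$-fraction of $\mu(v)$ has leaked into $V_1$; this gives the contraction $\mathbb{E}[\mu_N]\le (1-|V_0|^{-1}d_\ast^{-\rho})\,\mathbb{E}[\mu]$ and hence geometric decay of the total mass on $V_0$. You instead single out the expected hitting time $g$ of $V_1$ as a test function and obtain the exact decrement identity $\Phi(T_v\mu)-\Phi(\mu)=-\mu(v)$, turning the problem into a summability statement plus a short monotonicity argument between consecutive visits to $v$. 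Your approach is cleaner and more conceptual---it identifies the natural potential and yields an equality rather than a crude lower bound---and as a byproduct gives the stronger information that the total emitted mass $\sum_{N\ge 1}\mu_{N-1}(\sigma(N))$ is finite, bounded by $\Phi(\mu)$. The paper's argument, by contrast, delivers an explicit exponential rate in terms of the graph parameters $|V_0|$, $d_\ast$, and the diameter $\rho$, which your proof does not directly provide (though one could recover a rate from $\Phi$ via $\min_{V_0} g>0$).
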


\begin{proof}
We will assume without loss of generality that the subgraph on $V_0$ is connected, as otherwise we can apply
the argument that follows to each connected component of the subgraph.
Set $d_\ast:= \max_{v \in V_0} \mathrm{deg}(v)$ and let $\rho$ be the diameter of the subgraph on $V_0$,
i.e. the maximal length of the shortest path between any two vertices.
Assume that the maximum of $\mu$
is attained at some $v\in V_0$, and take any shortest path from $v$ to the sink $V_1$ through the vertices of $V_0$. Let
$v=v_{1} \sim .... \sim v_{k} \sim v^\ast \in V_1$ be such path where each $v_i \in V_0$.
We have $k\leq \rho$ by definition.
From the definition of $\sigma$ we can choose $N \in \N$ large enough such that 
the sequence $(v_i)_{i=1}^k$ is a subsequence of $(\sigma(i))_{i=1}^N$.
But after toppling $v_1,...,v_k$ the total mass of $V_0$ is decreased
by at least $\mu(v_1)/ d_\ast^\rho$, and as $\mu(v_1)=\max_{v\in V_0} \mu(v)$, we have
\begin{equation}\label{exp-decay}
\mathbb{E}[ T_{\sigma(N)}\circ ... \circ T_{\sigma(1)} \mu] \leq \mathbb{E}[\mu] 
\left(1- \frac{1}{|V_0|} \frac{1}{d_\ast^\rho}  \right).
\end{equation}
Since the average is reduced by a constant multiple the claim follows.
\end{proof}

\begin{remark}\label{Rem-abelian-graph}
By adapting the proof of Proposition \ref{Prop-Abelian} (in fact in a simpler form) one can easily show that the amount of mass each vertex of the sink receives
is independent of the toppling order, as long as each source vertex is toppled infinite number of times.
\end{remark}

\begin{remark}\label{Rem-infinite-steps}
Take a graph with two vertices and a single edge connecting them.
Let also each of these two vertices be attached to any number of sinks (possibly none, but not simultaneously). 
Then clearly one has to topple each vertex infinitely often in order to entirely transfer any given mass to the sinks.
Hence the requirement on each vertex to topple infinite number of times cannot be eliminated.
\end{remark}

We will see shortly that one can avoid a straightforward application of topplings
to transform mass from \emph{source} to \emph{sink}.
By realizing this transference
as a linear operator we will perform the mass transportation from source to sink effectively in one step.

Keeping the notation of Lemma \ref{Lem-Graph-balayage} let the source
be $V_0=\{v_1,...,v_N\}$ and the mass vector be equal to $\mu_0= (m_1,...,m_N)$.
Consider the toppling 
$ T = T_{v_N} \circ ... \circ T_{v_1} $ and for $k\in \N$ let $T^k$
denote the $k$-th iterate of $T$. Then by Lemma \ref{Lem-Graph-balayage}
we have that $T^k \mu_0 \to 0$ as $k \to \infty$ and the goal is to determine 
how much mass each sink vertex will receive in the limit.
Due to the abelian property (Remark \ref{Rem-abelian-graph}) the amount of mass
distributed to the sinks is independent of the toppling sequence,
hence specifying the toppling order results in no loss of generality.
To compute the final distribution of the mass on the sink we introduce matrices $M, D  \in M_N(\R) $ where we call $M$ the \textbf{mass transformation} matrix,
and $D$ the \textbf{mass distribution} matrix. 
More precisely $M_{ij}$ shows the proportion of $\mu_0(v_j)$ which is present at vertex $v_i$ after we have applied the transformation $T$ to the graph.
Namely if $T \mu_0$ is the new mass-vector after applying $T$, then 
$$
[T \mu_0] (v_i) = M_{i1} \mu_0(v_1) +M_{i2} \mu_0(v_2)+...+M_{i N} \mu_0(v_N).
$$
In a similar fashion $D_{ij}$ shows the proportion of $\mu_0(v_j)$ transferred to its neighbours by $v_i$.
Thus each sink (if any) attached to $v_i$ gets mass equal to
$$
D_{i1} \mu_0(v_1) + D_{i 2} \mu_0(v_2) + ... + D_{i N} \mu_0(v_N).
$$
A simple bookkeeping procedure illustrated below in Algorithm \ref{Algo-1} allows us to compute these coefficients 
when we consequently topple $v_1 \to v_2 \to .... \to v_N $.

\begin{algorithm}[!htbp]
   \caption{Computation of the transformation matrices}
\begin{flushleft}
 \textbf{Input:} Connected graph $G=(V,E)$ on source vertices $V=\{v_1,...,v_N\}$
\end{flushleft}
   \begin{algorithmic}[1]
   \State Initialize $M$ to $N \times N$ \emph{identity} and $D$ to $N \times N$ \emph{zero} matrices.
        \For{$i = 1$ to $N$} \Comment{corresponds to the toppling of $v_i$}
            \For {$j = 1$ to $N$}
               \State $D_{ij} \leftarrow  D_{ij} + \frac{M_{ij}}{\mathrm{deg}(v_i)}$
            \EndFor         
        
             \For {$p = 1$ to $N$}
                \For {$j $ with $v_j \sim v_i$}
		  \State $M_{jp}  \leftarrow  M_{jp} + \frac{M_{ip}}{\mathrm{deg}(v_i)}$    
                \EndFor
                 \State $M_{ip} \leftarrow 0$ %\Comment{Some comment...}
             \EndFor   
        \EndFor     
\end{algorithmic}
\begin{flushleft}
 \textbf{Output:} Matrices $M$ and $D$
\end{flushleft}
\label{Algo-1}
\end{algorithm}

Now observe that after the toppling $T^k$, $k\in \N$, is applied,
the mass vector distributed to sinks becomes
\begin{equation}\label{Neumann-series}
\mu_k = D \mu_0 + D [M \mu_0]+...+ D[M^{k-1} \mu_0] = D \left[ \sum\limits_{i=0}^{k-1} M^i \right] \mu_0 \in \R^N.
\end{equation}
To sum the series in (\ref{Neumann-series}) we need the following.

\begin{lem}\label{Lem-e-value1}
All eigenvalues of $M$ are less than 1 by absolute value.
\end{lem}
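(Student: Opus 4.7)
The plan is to combine the linearity of the toppling operation with the stabilization result of Lemma \ref{Lem-Graph-balayage}. First, I would observe that each elementary toppling map $T_v$ defined in (\ref{toppl-graph}) is a \emph{linear} endomorphism of $\R^{V_0}\simeq \R^N$, where a mass distribution is viewed as a vector indexed by the source vertices only (sink vertices absorb mass and are tracked by $D$ instead). Hence the composite $T := T_{v_N}\circ\cdots\circ T_{v_1}$ is linear, and a direct inspection of the update rule in Algorithm \ref{Algo-1} shows that $M$ is exactly the matrix of $T$ in the standard basis, i.e.\ $T\mu = M\mu$ for every $\mu\in\R^N$; iterating gives $T^k\mu = M^k\mu$ for every $k\in\N$.

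Second, I would apply Lemma \ref{Lem-Graph-balayage} to the cyclic sequence $\sigma(k) := v_{((k-1)\bmod N)+1}$, which is clearly infinitive on $V_0$. Its conclusion is that $(T^{k}\mu_0)(v)\to 0$ for every $v\in V_0$ and every nonnegative initial mass $\mu_0$. Using $T^k=M^k$, this reads $M^k\mu_0 \to 0$ coordinatewise for every $\mu_0\in \R^N$ with $\mu_0\geq 0$. Writing an arbitrary $v\in\R^N$ as $v=v^+-v^-$ with $v^\pm\geq 0$ and appealing to linearity, the same convergence holds for all $v\in\R^N$. Taking $v=e_1,\dots,e_N$ in turn, each column of $M^k$ tends to $0$, whence $M^k\to 0$ entrywise, and by equivalence of norms in finite dimension, in operator norm as well.

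Finally, it is classical that $M^k\to 0$ forces every eigenvalue of $M$ to satisfy $|\lambda|<1$. Suppose for contradiction that some eigenvalue $\lambda\in\mathbb{C}$ satisfies $|\lambda|\geq 1$, and let $w=x+iy\in\mathbb{C}^N$ be a corresponding eigenvector, so $M^k w=\lambda^k w$. Then
\[
  \|M^k x\|^2+\|M^k y\|^2 \;=\; \|M^k w\|^2 \;=\; |\lambda|^{2k}\,\|w\|^2 \;\geq\; \|w\|^2 \;>\;0
\]
uniformly in $k$, contradicting the fact that $M^k x\to 0$ and $M^k y\to 0$. The only mild obstacle in the whole argument is the bookkeeping check that Algorithm \ref{Algo-1} actually records the matrix of the composite toppling $T$ in the order $v_1\to\cdots\to v_N$; once that identification is made, the lemma is a direct translation of the stabilization in Lemma \ref{Lem-Graph-balayage} into a spectral statement.
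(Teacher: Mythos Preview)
Your proof is correct and follows essentially the same route as the paper: both identify $M$ with the linear map $T$, invoke the decay coming from Lemma~\ref{Lem-Graph-balayage}, and finish with an eigenvector contradiction. The only cosmetic difference is that the paper uses the quantitative contraction bound \eqref{exp-decay} together with nonnegativity of the entries of $M$ to obtain $\|M^k\mu_0\|_{l^1}\le(1-c)\|\mu_0\|_{l^1}$ directly, whereas you pass through the qualitative statement $M^k\to 0$; your treatment of complex eigenvalues via real and imaginary parts is slightly more explicit than the paper's.
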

\begin{proof}
Take any vector $\mu_0 \in \R^N$ with all coordinates being non-negative.
By virtue of (\ref{exp-decay}) there exists an integer $k\geq 1$ and a constant $0<c<1$
such that 
$$
\mathbb{E} [ M^k \mu_0 ] \leq (1-c) \mathbb{E} [\mu_0].
$$
Since $M$ has non-negative entries due to construction by Algorithm \ref{Algo-1}, from the last inequality we get
$$
|| M^k \mu_0 ||_{l^1 }  \leq (1-c) ||\mu_0||_{l^1}
$$
now for any $\mu_0\in \R^N$.
Hence, if $\mu_0\in \R^N \setminus \{0\}$ is an eigenvector of $M$
corresponding to an eigenvalue $\lambda$, we get
$$
||M^k \mu_0||_{l^1}= |\lambda|^k ||\mu_0||_{l^1} \leq (1-c) ||\mu_0||_{l^1},
$$
which implies $|\lambda|^k \leq (1-c)<1$ completing the proof of the lemma.
\end{proof}

Lemma \ref{Lem-e-value1} implies that the matrix $\mathrm{Id}- M$ is invertible,
where $\mathrm{Id}$ is the $N\times N$ identity matrix.
Hence the Neumann series of $M$, as we have in (\ref{Neumann-series}),
is convergent. Passing to limit in (\ref{Neumann-series}) as $k \to \infty $
demonstrates that the total mass $\mu$ distributed to the sinks equals
\begin{equation}\label{mass-trans-sink}
 \R^N \ni \mu = D \left[ \sum\limits_{i=0}^\infty M^i \right] \mu_0 = D (\mathrm{Id}- M)^{-1} \mu_0. 
\end{equation}
Observe that all mass is being transferred to the sink by Lemma \ref{Lem-Graph-balayage}, and hence there is no loss of mass. This shows that the linear operator
defined by (\ref{mass-trans-sink}) is non-degenerate.
Hence, the matrix $D$ is invertible as well. 
Finally, each sink vertex attached to $v_i$ gets
mass equal to the $i$-th component of the vector 
\begin{equation}\label{sink-distrib}
 [D(\mathrm{Id}- M)^{-1} \mu_0]\in \R^N.
\end{equation}

Based on the above discussion, we next present a simple pseudocode, which allows
to stabilize the given initial distribution in finite number of steps.
 
\begin{algorithm}[!htbp]
   \caption{Boundary sandpile in finite steps}
\begin{flushleft}
 \textbf{Input:} Initial distribution $\mu_0$ and boundary capacity $\CAP$
\end{flushleft}
 
\begin{algorithmic}[1]
 \State Initialize $V$, the set of visited sites, as the support of $\mu_0$
 \vspace{0.1cm}
 \State Populate a stack $\mathcal{V}$ of sites from $V$ that need to be toppled.
 
%   \While{$\mathcal{V}$ is NOT empty}
\Repeat
     \While{$\mathcal{V}$ is NOT empty}
		\State topple(v) \Comment $v$ is the top element of the stack
		\For {$w\in \Z^d$ with $w\sim v$ }
		 \If{ mass at $w$ is strictly larger than $\CAP$}
		   \State include $w$ in $\mathcal{V}$ if not there already
		 \EndIf
		\EndFor
        \State remove $v$ from $\mathcal{V}$
        
        \EndWhile

        \For {each connected component $V_k$ of the visited sites $V$}
          \State construct $G$, the lattice graph on vertices of $\INT{V_k}$
          \State construct the distribution matrices $M$ and $D$ for $G$ by Algorithm \ref{Algo-1}
          \State use $M$ and $D$ to distribute the mass from $\INT{V_k}$ to $\partial V_k$ by \eqref{sink-distrib}
        \EndFor
        
    \State Re-populate the stack $\mathcal{V}$ by elements of $\partial V$ that need to be toppled
    \vspace{0.1cm}
 \Until{$\mathcal{V}$ is empty}       
 %  \EndWhile   
\end{algorithmic}
\begin{flushleft}
 \textbf{Output:} Final set of visited sites and stable mass distribution on the boundary
\end{flushleft}

\label{Algo-2}
\end{algorithm}

In Algorithm \ref{Algo-2} we simply move the boundary of the set of visited
sites as long as there is a possibility. Then, we transfer at once all mass
from the interior onto the boundary which requires a passage to the limit
in a toppling procedure. Consequently,
the fact that this will produce the same final configuration
as the original boundary sandpile process, is not a direct corollary of abelian property
of the sandpile, but follows from Lemma \ref{Lem-interim}.
In particular, due to Lemma \ref{Lem-interim} each iteration of the algorithm
resuming from Step 3, starts with an initial distribution $\mu$ for which
the two sandpiles $\mathrm{BS}(\mu, \CAP)$ and $\mathrm{BS}(\mu_0, \CAP)$
produce the same final configurations. This fact, coupled with Proposition \ref{Prop-stabil-toppling}
implies that Algorithm \ref{Algo-2} terminates in finite number of steps
since the set of visited sites cannot grow to infinity
\footnote{We do not
attempt to estimate the complexity of Algorithm \ref{Algo-2} depending on $\mu_0$ and $\CAP$,
however, it is interesting to observe, that the same approach can stabilize the divisible sandpile of Levine-Peres \cite{Lev-Per}
in finite number of steps.}.

%%=================================================================
%%
%% Section 
%%
%%=================================================================
\section{Discrete potential theory in the sandpile framework}

\subsection{The canonical domain of the model}

We start with the following.

\vspace{0.3cm}
\noindent \textbf{Discrete maximum principle (DMP): } Let $u,v: \Z^d \to \R$, $V\subset \Z^d$ be finite, and $\Delta u \geq \Delta v$ in the interior of $V$.
Then $\max_{\partial V} (u-v) \geq \max_{V} (u-v)$.
\vspace{0.1cm}

The proof of this important statement is an easy exercise (see e.g. \cite[Exercise 1.4.7]{Lawler-book-walks}).
We now show that the boundary sandpile requires the smallest possible domain
in the discrete space, where it can stabilize itself.

\begin{definition}\label{Def-stab-pair}{\normalfont{(Stabilizing pair)}}
For an initial distribution $\mu_0$ and a boundary capacity $\CAP$, we say that the pair
$(V, u)$ is stabilizing for $\mathrm{BS}(\mu_0, \CAP)$ if the following holds:
\begin{itemize}
 \item[{\normalfont{(i)}}] $V\subset \Z^d$ is a finite set 
 {\normalfont{and}} $\mathrm{supp} \mu_0 \subset V$,
 \vspace{0.2cm}
 \item[{\normalfont{(ii)}}] $u:V \to \R_+$ is a function satisfying
 $$
  \begin{cases}  \Delta u = -\mu_0 , &\text{{\normalfont{in}} $\INT{V}$}, \\ 
                 u=0   ,&\text{{\normalfont{on}}  $  \partial V$}, \\
                 \mu_0 + \Delta u \leq \CAP   ,&\text{{\normalfont{on}}  $  \partial V$}.
  \end{cases}
$$
\end{itemize}
In this setting, we will call $u$ odometer function.
\end{definition}

\begin{remark}\label{rem-stabilizing}
For any $\mu_0$ and $\CAP$ let $V_0$ be the set of visited sites of the corresponding $\mathrm{BS}$ and $u_0$ be the odometer function.
Then clearly the pair $(V_0,   u_0)$ is stabilizing in a sense of Definition \ref{Def-stab-pair}.
\end{remark}

\begin{theorem}\label{Thm-canonical}{\normalfont{(The minimality principle)}}
Let $V_0\subset \Z^d$ be the set of visited sites of $\mathrm{BS}(\mu_0 , \CAP)$.
Then $V_0$ is the intersection of all $V\subset \Z^d$ 
for which there is a function $u$ such that the pair $(V,u)$ is stabilizing for $\mathrm{BS}(\mu_0, \CAP)$.
\end{theorem}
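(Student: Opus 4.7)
The plan is to prove the two inclusions separately. The easy direction follows from Remark \ref{rem-stabilizing}: the pair $(V_0, u_0)$ itself is a stabilizing pair, so $V_0$ belongs to the collection over which the intersection is taken, whence the intersection is contained in $V_0$.

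For the reverse containment $V_0 \subseteq V$ for every stabilizing pair $(V, u)$, I extend $u$ by zero outside $V$, fix an infinitive toppling sequence $\{x^{(k)}\}_{k\geq 1}$ realizing $\mathrm{BS}(\mu_0, \CAP)$, and denote by $u_k$, $\mu_k$ the partial odometer and mass distribution after the $k$-th toppling. The central technical claim is the pointwise comparison
\[
u_k(x) \leq u(x) \qquad \text{for all } k\geq 0,\ x\in \Z^d.
\]
Granting this and letting $k\to \infty$ yields $u_0 \leq u$. Since $u\equiv 0$ off $\INT{V}$, no site outside $\INT{V}$ ever topples, and because $N(\INT{V}) \subseteq V$ together with $\supp\mu_0 \subseteq V$, the visited set satisfies $V_0 \subseteq V$.

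The pointwise comparison is proved by a minimal counterexample argument. Let $k\geq 1$ be the smallest step with $u_k(z)>u(z)$ at some $z$; since only the toppled site changes, $z=x^{(k)}$ and the toppling is legal. Using \eqref{Laplace-of-odometer} together with $u_{k-1}\leq u$ one gets
\[
u_k(z)=\mu_0(z)+\frac{1}{2d}\sum_{y\sim z}u_{k-1}(y)\;\leq\;\mu_0(z)+\frac{1}{2d}\sum_{y\sim z}u(y).
\]
If $z\in \INT{V}$, the right-hand side equals $u(z)$ by Definition \ref{Def-stab-pair}(ii), a contradiction. Otherwise $u(z)=0$, and minimality of $k$ forces $u_{k-1}(z)=0$; the stabilizing inequality on $\partial V$ then gives $\mu_{k-1}(z)\leq \CAP$, which rules out the case $z\in\partial V_{k-1}$ where legal toppling would demand $\mu_{k-1}(z)>\CAP$.

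The delicate remaining case is $z \in \INT{V_{k-1}}$ with $z \notin \INT{V}$. Here every neighbor of $z$ lies in $V_{k-1}$, and one produces a site $z^\ast$ (either $z$ itself if $z\notin V$, or a neighbor of $z$ if $z\in\partial V$) that belongs to $V_{k-1}\setminus V$. I plan to trace back to the first step $j<k$ at which $z^\ast$ joined the visited set: at that step some $y\sim z^\ast$ is legally toppled, which forces $u_j(y)>0$, but minimality of $k$ gives $u(y)\geq u_j(y)>0$, hence $y\in \INT{V}$, contradicting that $y$ has a neighbor $z^\ast$ outside $V$. This closes the induction and the proof. I expect the main obstacle to be precisely this trace-back step, because the one-step comparison breaks at $\partial V$: one must exploit the history of the toppling sequence together with the fact that the extended $u$ vanishes on $\partial V$ and outside $V$, so that any toppling with positive increment must take place in $\INT{V}$.
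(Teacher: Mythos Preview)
Your argument is correct and takes a route genuinely different from the paper's.  The paper proceeds in two stages: first it shows that the family of sets admitting a stabilizing function is closed under pairwise intersection (using $\min\{u_1,u_2\}$ as a supersolution on $V_1\cap V_2$ and replacing it by the harmonic solution via DMP), so the intersection $V_*$ itself carries a stabilizing function $u_*$; second, it runs a toppling infinitive only on $\INT{V_*}$, checks via $u_*$ that this toppling already stabilizes, and then invokes the Abelian property (Proposition~\ref{Prop-Abelian}) to conclude $V_*=V_0$.

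You instead fix an arbitrary stabilizing pair $(V,u)$ and compare the partial odometers $u_k$ of an infinitive toppling directly against $u$, by a minimal-counterexample induction very much in the spirit of the proof of Proposition~\ref{Prop-Abelian} itself.  This bypasses the intersection-closure step entirely and does not use the Abelian property as a black box.  The paper's route yields the structural byproduct that stabilizing sets are closed under intersection; your route is more self-contained for this particular statement and makes explicit the comparison with a single stabilizing pair.  The trace-back step you flagged as delicate is indeed the crux, and your handling of it is sound: the key point is that any legal toppling strictly increases the odometer at the toppled site, so the inductive bound $u_j\le u$ forces that site into $\INT{V}$, contradicting adjacency to $z^*\notin V$.
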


\begin{proof}
The set of stabilizing pairs is non-empty in view of Remark \ref{rem-stabilizing}.
Hence if $V_*$ is the intersection in the formulation of the theorem, then $V_*$ is finite and contains the support of $\mu_0$.

First, we show that for some $u_*$ the pair $(V_*, u_*)$ is stabilizing.
To see this for $V_*$, it is enough to prove that for any two stabilizing pairs $(V_i, u_i)$, $i=1,2$
the set $V := V_1 \cap V_2$ has the property.
To this end, consider the function $v(x) := \min\{u_1(x), u_2(x) \}$ in $V$.
Clearly
\begin{equation}\label{bdry-as-union}
\partial V = (\partial V_1 \cap V_2 )\cup (\partial V_2 \cap V_1 ),
\end{equation}
from which we get $v=0$ on $\partial V$.
It is also clear that $\Delta v \leq -\mu_0$ in $\INT{V}$.
Next, using the fact that $v$ vanishes on $\partial V$
and is the pointwise minimum of $u_1$ and $u_2$, from (\ref{bdry-as-union})
we have 
\begin{equation}\label{v0-on-V0}
\Delta v +\mu_0 \leq \CAP  \text{ on } \partial V.  
\end{equation}

Now let $w$ be the unique solution to 
$$
\Delta w = -\mu_0 \ \text{ in }   \INT{V} \ \ \text{ and } \ \  w=0 \text{ on } \partial V.
$$
Since $w-v$ is a subsolution in the interior of $V$,
from DMP we have $w \leq v $ in $V$.
Combining this with the fact that both $w$ and $v$ vanish on $\partial V$ we 
arrive at
$$
\Delta w \leq \Delta v   \text{ on } \partial V.
$$
But then, 
$$
\Delta w  +  \mu_0 \leq \Delta v  + \mu_0 \leq \CAP \text{ on } \partial V,
$$
where the last inequality is due to (\ref{v0-on-V0}).
Hence, the pair $(V, w)$ is stabilizing and the claim for $V_*$ follows.

We are now in a position to show $V_0 = V_*$.
By definition of $V_*$ we have $V_*\subset V_0$.
Take any toppling sequence $T=\{ x^{(i)} \}_{i=1}^\infty$ 
where $x^{(i)} \in \INT{V_*}$ for all $i\in \N$ and $T$
is infinitive\footnote{Observe that $\INT{V_*}$ might be empty, in which case $T$ is empty as well, that is no toppling occurs.
This corresponds to the case, when the initial distribution $\mu_0$ is already stable.} on $\INT{V_*}$.
Observe that if $T$ is stabilizing for the original $\mathrm{BS}$, then $V_* = V_0$ as the final configuration is unique
by abelian property of the sandpile. Hence, it is left to prove that $T$ is stabilizing.
Assume it is not. Invoking $T$,
we get that all mass from the interior of $V_*$ is being redistributed onto $\partial V_*$ by Lemma \ref{Lem-Graph-balayage}.
We have  $V_* \subset V_0$, and since $T$ is not stabilizing it follows that the distribution of mass on $V_*$ obtained after applying $T$
is not stable. Since no mass remains in the interior of $V_*$ it follows that the boundary of $V_*$ is unstable.
Let $u$ be the odometer function corresponding to the toppling $T$. Then, by definition we have
\begin{equation}\label{u-in-int-V-star}
\Delta u = - \mu_0 \text{ in } \INT{V_*} \text{ and } u=0 \text{ on } \partial V_*.
\end{equation}
Let also $u_*$ be the odometer for which $(V_*, u_* )$ is stabilizing,
whose existence was established above. By definition, $u_*$ satisfies (\ref{u-in-int-V-star})
hence, due to uniqueness of solutions, we get that $u=u_*$.
But then
$$
\Delta u + \mu_0 = \Delta u_* +\mu_0 \leq \CAP \text{ on } \partial V_*
$$
where the inequality follows since the pair $(V_*, u_*)$ is stabilizing.
This shows, that $(V_*, u)$ is stabilizing as well,
which contradicts our assumption that the toppling $T$ was not stabilizing, and completes the proof of the theorem.
\end{proof}

\begin{definition}\label{Def-stab-pair}{\normalfont{(The stabilizing pair)}}
For a given $\mathrm{BS}(\mu_0, \CAP)$ we call $(V, u)$
\emph{the stabilizing pair} of the sandpile if $V$ is minimal in a sense of Theorem \ref{Thm-canonical}.
\end{definition}

\begin{remark}
An immediate reformulation of the previous theorem is the following 
characterization of the odometer as the smallest supersolution in a certain class of functions on $\Z^d$.
For $\mathrm{BS}(\mu_0, \CAP)$ let $u$ be the odometer function. Then
$$
u= \inf \{ u: \Z^d \to \R_+: \ \Delta u \leq - \mu_0 \text{ in } \INT{\mathcal{U}} {\normalfont\text{  and  }} \mu_0 + \Delta u \leq \CAP \text{ on } \partial \mathcal{U} \},
$$
where $\mathcal{U} : = \{x \in \Z^d: \ u(x)>0 \} \cup \supp \mu_0$. 
\end{remark}

A useful corollary of the minimality principle, which is being used in Algorithm \ref{Algo-2} above, is the following.

\begin{lem}\label{Lem-interim}
Let $(V_0,u_0)$ be the stabilizing pair for $\mathrm{BS}(\mu_0, \CAP)$.
Fix any $V_1 \subset V_0$ such that $\supp \mu_0 \subset V_1$ and let $u_{0\to 1}$ be the solution to
$$
\Delta u_{0\to 1} = -\mu_0 \text{ in }  \INT{V_1} {\normalfont\text{  and  }} u_{0\to 1}=0 \text{ on }  \partial V_1. 
$$
Then the final configurations of $\mathrm{BS}(\mu_0, \CAP)$ and $\mathrm{BS}(\Delta u_{0\to 1}  +\mu_0, \CAP ) $
are identical.
\end{lem}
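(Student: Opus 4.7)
The plan is to construct a stabilizing pair for $\mathrm{BS}(\mu_1, \CAP)$ (where $\mu_1 := \mu_0 + \Delta u_{0\to 1}$) on the same domain $V_0$, and then combine the Abelian property with Lemma~\ref{Lem-Graph-balayage} to identify the common final mass distribution of the two sandpiles. Set $w := u_0 - u_{0\to 1}$ on $V_0$, extending $u_{0\to 1}$ by zero outside $V_1$. On $V_1$, both $u_0$ and $u_{0\to 1}$ solve $\Delta(\cdot) = -\mu_0$ in $\INT{V_1}$ with $u_{0\to 1} = 0 \leq u_0$ on $\partial V_1$, so the discrete maximum principle gives $u_0 \geq u_{0\to 1}$ there; on $V_0 \setminus V_1$ the extension vanishes and $u_0 \geq 0$; thus $w \geq 0$ on $V_0$. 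The identity $\Delta u_{0\to 1} = \mu_1 - \mu_0$ on $\Z^d$ yields $\Delta w = -\mu_1$ on $\INT{V_0}$; the inclusion $\INT{V_1} \subseteq \INT{V_0}$ forces $\partial V_0 \subseteq \partial V_1 \cup (\Z^d \setminus V_1)$, on which $u_{0\to 1}$ vanishes, so $w = 0$ on $\partial V_0$; and $\mu_1 + \Delta w = \mu_0 + \Delta u_0 \leq \CAP$ on $\partial V_0$ by the stabilizing condition for $(V_0, u_0)$. Consequently $(V_0, w)$ is a stabilizing pair for $\mathrm{BS}(\mu_1, \CAP)$, and Theorem~\ref{Thm-canonical} gives $\tilde V \subseteq V_0$ for its set of visited sites.

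To match the final mass distributions themselves I use approximation. Let $\mathcal{T}_1$ be an infinitive toppling on $\INT{V_1}$ only, and write $\mu^{(k)}_1, u^{(k)}_1$ for the mass and partial odometer after $k$ steps of $\mathcal{T}_1$ applied to $\mu_0$; Lemma~\ref{Lem-Graph-balayage} together with $\Delta u^{(k)}_1 = \mu^{(k)}_1 - \mu_0$ gives $\mu^{(k)}_1 \to \mu_1$ and $u^{(k)}_1 \to u_{0\to 1}$ pointwise. For any infinitive toppling $\mathcal{T}_2$ on $\Z^d$, the concatenation of the first $k$ steps of $\mathcal{T}_1$ with $\mathcal{T}_2$ is an infinitive sequence for $\mathrm{BS}(\mu_0, \CAP)$, so by Proposition~\ref{Prop-Abelian} its odometer equals $u_0$; hence the odometer contributed by $\mathcal{T}_2$ alone, initialized at $\mu^{(k)}_1$, is $v_k := u_0 - u^{(k)}_1$, which by another application of Proposition~\ref{Prop-Abelian} coincides with the odometer of $\mathrm{BS}(\mu^{(k)}_1, \CAP)$. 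Thus the final mass distribution of $\mathrm{BS}(\mu^{(k)}_1, \CAP)$ equals $\mu^{(k)}_1 + \Delta v_k = \mu_0 + \Delta u_0$ for every $k$; monotone convergence $v_k \searrow w$ as $k \to \infty$, together with the identification of $w$ as the minimal odometer of $\mathrm{BS}(\mu_1, \CAP)$, then delivers $\mu_1 + \Delta \tilde u = \mu_0 + \Delta u_0$, as required.

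The hardest step is the final identification of $w$ with the minimal odometer $\tilde u$ of $\mathrm{BS}(\mu_1, \CAP)$, equivalently the pointwise identity $u_0 = u_{0\to 1} + \tilde u$ on $\Z^d$ (which, via the harmonic-compactly-supported argument, is equivalent to $\mu_1 + \Delta \tilde u = \mu_0 + \Delta u_0$). The inequality $\tilde u \leq w$ is immediate from the first paragraph and the infimum characterization in the remark following Theorem~\ref{Thm-canonical}. The reverse inequality follows by combining the monotone convergence $v_k \searrow w$ with a continuity property of the minimal odometer with respect to initial distributions supported in the fixed bounded set $V_0$; this point is verified by a direct comparison of stabilizing pairs for $\mathrm{BS}(\mu^{(k)}_1, \CAP)$ and $\mathrm{BS}(\mu_1, \CAP)$ on $V_0$ via Theorem~\ref{Thm-canonical}.
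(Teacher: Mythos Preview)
Your first paragraph is sound and matches the paper: the function $w = u_0 - u_{0\to 1}$ is exactly the paper's $u_{1\to 0}$, and showing that $(V_0, w)$ is a stabilizing pair for $\mathrm{BS}(\mu_1, \CAP)$ yields $\tilde V \subseteq V_0$ and $\tilde u \leq w$ by minimality.

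The gap lies in your approximation argument for the reverse inequality. You invoke Lemma~\ref{Lem-Graph-balayage} to obtain $u_1^{(k)} \to u_{0\to 1}$ and $\mu_1^{(k)} \to \mu_1$, but that lemma concerns the unconditional graph toppling~\eqref{toppl-graph}, whereas the Abelian property (Proposition~\ref{Prop-Abelian}) you apply to the concatenated sequence governs \emph{BS} toppling. These dynamics disagree: under BS rules, a site lying on the boundary of the \emph{current} visited set $W_k \subset V_1$ with mass at most $\CAP$ is stable and does not fire. Hence, reading the first $k$ terms of $\mathcal{T}_1$ as BS moves, several of them become no-ops, the resulting BS partial odometer is in general strictly smaller than the graph-toppling odometer $u_1^{(k)}$, and the intermediate mass distribution is not $\mu_1^{(k)}$. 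The identification of $v_k = u_0 - u_1^{(k)}$ with the odometer of $\mathrm{BS}(\mu_1^{(k)}, \CAP)$ therefore fails. Even setting this aside, the final step---passing from $\mathrm{BS}(\mu_1^{(k)}, \CAP)$ to $\mathrm{BS}(\mu_1, \CAP)$ via an asserted ``continuity of the minimal odometer''---is not delivered by Theorem~\ref{Thm-canonical}; it is essentially the content of Lemma~\ref{Lem-interim} itself, so the argument is circular.

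The paper closes the argument without any limiting procedure, by a symmetric use of minimality. Letting $(\tilde V, \tilde u)$ be the stabilizing pair for $\mathrm{BS}(\mu_1, \CAP)$, one checks directly that $(\tilde V,\, u_{0\to 1} + \tilde u)$ is a stabilizing pair for $\mathrm{BS}(\mu_0, \CAP)$, which forces $V_0 \subseteq \tilde V$ and hence $V_0 = \tilde V$. Then $u_0 - (u_{0\to 1} + \tilde u)$ is discrete-harmonic on $\INT{V_0}$ with zero boundary values, so it vanishes identically, giving $\tilde u = w$ and the equality of final configurations in one stroke.
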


The lemma states that if we balayage $\mu_0$ to $\partial V_1$ first, and then
balayage the resulting measure from $\partial V_1$ to $\partial V_0$, the boundary measure produced on $\partial V_0$
will be the same as if we had balayaged $\mu_0$ on $\partial V_0$.
This is obvious from a potential theoretic and balayage perspective. 
It is however not straightforward from a $\mathrm{BS}$ dynamic point of view.
\vspace{0.2cm}

\begin{proof}[Proof of Lemma \ref{Lem-interim}]
Set $\mu_1: = \Delta u_{0\to 1}  +\mu_0$, which, by the definition of $u_{0\to 1}$,
is a distribution supported on $\partial V_1$. Let $(V_2, u_2)$ be the stabilizing pair
for $\mathrm{BS}(\mu_1, \CAP)$. The aim is to show that $V_2= V_0$ and 
that the final configurations for $\mathrm{BS}(\mu_0, \CAP)$ and $\mathrm{BS}(\mu_1, \CAP)$ coincide.
The latter is equivalent to equality $\mu_0 + \Delta u_0 = \mu_1 + \Delta u_2$ on $\Z^d$,
which in turn is equivalent to 
\begin{equation}\label{config-equiv}
 \Delta u_0 = \Delta(u_2 + u_{0\to 1})  \text{ on } \Z^d
\end{equation}
which we will prove below.

We have
$$
\Delta u_2 = -\mu_1 \text{ in }  \INT{V_2} \text{  and  } u_2=0 \text{ on }  \partial V_2,
$$
as well as $\mu_2 := \mu_1 + \Delta u_2 \leq \CAP$ on $\partial V_2$. Clearly $V_1 \subset V_2$
therefore $u_{0\to 1} + u_2 =0$ on $\partial V_2$. We also have
$$
\Delta(u_{0\to 1} + u_2 ) = \mu_1 - \mu_0 + \mu_2 -\mu_1 = \mu_2 - \mu_0 \text{ in } V_2,
$$
hence the pair $(V_2, u_{0\to 1} + u_2 )$ is stabilizing for $\mathrm{BS}(\mu_0 , \CAP)$.
As a consequence of Theorem \ref{Thm-canonical} we obtain $V_0 \subset V_2$.
We now prove the reverse inclusion. Since $V_1 \subset V_0$, it follows that $\mu_1$ is supported in $V_0$.
Hence, we may define $u_{1\to 0}$ as the unique solution of
$$
\Delta u_{1\to 0} = -\mu_1 \text{ in }  \INT{V_0} \text{  and  } u_{1\to 0}=0 \text{ on }  \partial V_0.
$$
Since $V_1\subset V_0$ it follows that $ u_{0\to 1} +u_{1 \to 0}=0$ on $\partial V_0$. This implies that the function
$w : = u_0 - (u_{0\to 1} +u_{1 \to 0})$ is vanishing on $\partial V_0$. But we also have
$$
\Delta w = -\mu_0 + \mu_0 - \mu_1 + \mu_1 =0 \text{ in } \INT{V_0},
$$
and by the uniqueness of solutions to the Dirichlet problem, we infer that $w=0$ in $V_0$.
In particular it follows that
$$
\mu_1 + \Delta u_{1 \to 0} = \mu_0 + \Delta u_0,
$$
which, by the stability of $u_0$, implies that $(V_0, u_{1\to 0})$ is stabilizing for $\mathrm{BS}(\mu_1, \CAP)$.
Again, by the minimality principle we get $V_2 \subset V_0$, consequently $V_2= V_0$.
It is left to show (\ref{config-equiv}). Since $V_2= V_0$, the function
$u=u_2 + u_{0\to 1} - u_0$ vanishes on $\partial V_0$ and is identically 0 outside $V_0$.
Hence in the interior of $V_0$ we have
$$
\Delta u = -\mu_1 + \Delta u_{0 \to 1} - \Delta u_0 =
 - \mu_1 +(\mu_1 - \mu_0) + \mu_0 =0,
$$
which shows that $u=0$ on $\Z^d$ and hence (\ref{config-equiv}). The lemma is proved.
\end{proof}

\subsection{Directional monotonicity for point masses}

Yet another important consequence of the minimality principle
of Theorem \ref{Thm-canonical} is a special directional monotonicity of the odometer function
corresponding to an initial distribution concentrated at a single point.
This key property of the boundary sandpile will prove crucial in studying the regularity of the boundary of scaling limit
of the process.

Let $\mathcal{S}$ be the set of mirror symmetry hyperplanes of the unit cube $[0,1]^d$.
More precisely, $\mathcal{S}$ consists of the  hyperplanes
$\{x_i = 1/2\}$, $\{x_i = x_j\}$ and $\{x_i=-x_j\}$ where $i,j=1,...,d$ with $i\neq j$.
In particular, $\mathcal{S}$ has $d^2$ elements in total, and the set of vectors \eqref{dir-N}
represents the set of all directions of unit normals to elements of $\mathcal{S}$.

\begin{theorem}\label{Thm-monotonicity}{\normalfont{(Directional monotonicity)}}
Assume $(V,u)$ is the stabilizing pair of $\mathrm{BS}(n\delta_0, \CAP)$ where $n>0$ and $\CAP>0$.
Fix any hyperplane $T\in \mathcal{S}$.
Then, for any $X_1, X_2 \in \Z^d$, such that $X_1 - X_2$ is a non-zero vector orthogonal to $T$,
we have
$$
u(X_1) \geq u(X_2) \ \ \text{ if } \ \ |X_1| \leq |X_2|.
$$
\end{theorem}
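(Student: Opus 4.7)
The plan is to use the minimality principle of Theorem \ref{Thm-canonical} together with the characterization of $u$ as the smallest admissible supersolution recorded in the remark immediately following it, combined with a polarization across the perpendicular bisector of $\{X_1,X_2\}$.

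Setup. Let $H$ be the perpendicular bisector of $\overline{X_1X_2}$ and let $R=R_H$ be the orthogonal reflection in $H$. Because $X_1-X_2$ is a nonzero integer multiple of some $e\in\mathcal{N}$, a short verification in each of the cases $e\in\{e_i,\,e_i+e_j,\,e_i-e_j\}$ shows that $R$ is a bijection of $\Z^d$. Let $H^-,H^+$ be the open half-spaces containing $X_1$ and $X_2$ respectively; the assumption $|X_1|\le|X_2|$ puts the origin in $H^-\cup H$. Define $w:\Z^d\to\R_+$ by
\[
w(x)=\begin{cases} u(x), & x\in H^-\cup H,\\ \min\{u(x),\,u(R(x))\}, & x\in H^+.\end{cases}
\]
Then $w\le u$ on $\Z^d$, with equality on $H^-\cup H$, and $w(X_2)=\min\{u(X_1),u(X_2)\}$.

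Reduction. I claim that $w$ is admissible for the characterization of $u$ in the remark after Theorem \ref{Thm-canonical}; i.e., writing $\mathcal{U}_w:=\{w>0\}\cup\{0\}$, one has $\Delta w\le -n\delta_0$ on $\INT{\mathcal{U}_w}$ and $n\delta_0+\Delta w\le \kappa_0$ on $\partial\mathcal{U}_w$. Granting this, the minimality of $u$ among such supersolutions forces $u\le w$, and together with $w\le u$ we get $u=w$ throughout $\Z^d$. In particular $u=\min\{u,u\circ R\}$ on $H^+$, i.e.\ $u(x)\le u(R(x))$ for every $x\in H^+$; specialising to $x=X_2$ yields $u(X_2)\le u(R(X_2))=u(X_1)$, which is the claim.

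Verification of admissibility. The Laplacian of $w$ is estimated by a local case analysis. If every lattice neighbour of $x$ lies in $H^-\cup H$ then $w=u$ locally, and supersolvability is inherited from $u$. If $x$ is strictly inside $H^+$ and $x\neq R(0)$, then $w$ locally equals $\min\{u,u\circ R\}$, and the elementary pointwise inequality $\Delta\min(f,g)\le\min(\Delta f,\Delta g)$ gives $\Delta w(x)\le 0$. At $x=R(0)\in H^+$, the identity $\Delta(u\circ R)(R(0))=\Delta u(0)=-n$ only strengthens the required bound $\Delta w(R(0))\le 0$, and causes no obstruction since $R(0)\notin\supp(n\delta_0)$.

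The main obstacle is the interface analysis. At sites $x$ having neighbours on both sides of $H$, one bounds a neighbour value $w(y)$ with $y\in H^+$ either by $u(y)$ or by $u(R(y))$, selecting the bound (dictated by whether $w(x)$ locally equals $u(x)$ or $u(R(x))$) so that the resulting upper estimate of $\Delta w(x)$ reduces to either $\Delta u(x)\le -\mu_0(x)$ or $\Delta u(R(x))\le -\mu_0(R(x))$, modulo a correction term of the favourable sign arising from the discrepancy of $u$ at paired points across $H$. A parallel case analysis at $\partial\mathcal{U}_w$, using that a boundary site of $\mathcal{U}_w$ in $H^+$ reflects through $R$ to a site of $\partial V$, yields the boundary inequality $n\delta_0+\Delta w\le\kappa_0$ from the corresponding bound for $u$. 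The work is thus a bookkeeping of cases at the interface rather than a single hard estimate.
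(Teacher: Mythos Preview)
Your approach is essentially the paper's: a polarization (moving-plane) construction across the perpendicular bisector of $X_1,X_2$, followed by an appeal to the minimality of the odometer. The paper packages the last step slightly differently---it shows that the set $V_T=(V\cap\mathcal H_-)\cup(V\cap V^*\cap\mathcal H_+)$ together with the exact Green solution on it forms a stabilizing pair, invokes Theorem~\ref{Thm-canonical} to get $V_T=V$, and then deduces $u\le u_T$---whereas you invoke the supersolution characterization in the Remark directly. These are equivalent routes.

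One genuine error: the ``elementary pointwise inequality'' $\Delta\min(f,g)\le\min(\Delta f,\Delta g)$ is false. In $d=1$ take $f=(10,0,10)$ and $g=(0,5,0)$; then $\min(f,g)\equiv 0$ so $\Delta\min(f,g)(0)=0$, while $\Delta g(0)=-5$. What \emph{is} true, and what the paper uses, is that $\Delta\min(f,g)(x)\le\Delta f(x)$ whenever $\min(f,g)(x)=f(x)$ (and symmetrically). This weaker statement still gives $\Delta w(x)\le 0$ at the points in question, since both $\Delta u(x)$ and $\Delta(u\circ R)(x)$ are nonpositive there; so the step is salvageable, but not as you wrote it. Also, your boundary claim that ``a boundary site of $\mathcal U_w$ in $H^+$ reflects through $R$ to a site of $\partial V$'' is not quite right: such a site may instead lie near $\partial V$ itself (when the vanishing neighbour has $u(y)=0$ rather than $u(R(y))=0$); the case split must track both possibilities. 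The interface analysis you sketch is the real content and must be done with the care of the paper's Case~2, separating the axis-perpendicular situation (where the unique neighbour of $x\in H^+$ across $H$ is $R(x)$ itself, producing a correction term $\frac{1}{2d}(u(R(x))-u(x))\le 0$) from the diagonal situation (where neighbours on $H$ are their own reflections).
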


\vspace{0.2cm}
This result formalizes the intuition that moving away from the origin, where the total mass of the sandpile was concentrated
initially, emissions of mass should become smaller. There are however restrictions,
partially due to our proof, on the
way one moves on the lattice. Namely the theorem allows to compare points lying on
a ``lattice-line".
The proof is motivated by A.D. Alexandrov's celebrated \emph{moving plane} method \cite{Alexandrov},
see also \cite{Serrin}.

\vspace{0.2cm}

\begin{proof}[Proof of Theorem \ref{Thm-monotonicity}] 
We will assume $|X_1| \leq |X_2|$ and will show that $u(X_1) \geq u(X_2)$.
This clearly implies the claim of the theorem.

There is a unique translated copy of $T$, call it $T_0$, having equal distance from
$X_1$ and $X_2$. For a given $x\in \R^d$ denote by $x^*$ its mirror reflection with respect to the hyperplane $T_0$.
Due to the choice of $T$ and $X_1$, $X_2$ it follows that each lattice point
is being reflected in this way to a lattice point, and since reflection is an involution,
we have that $(\Z^d)^* = \Z^d$, in particular  $X_1 = (X_2)^*$.

Set $V^* := \{ x^*: \ x\in V \}$ for the reflection of $V$, 
and similarly define the reflected odometer function by $u^*(x) : = u(x^*)$, where $x\in \Z^d$.
From the discussion above, we have that $u^*$ is defined on $\Z^d$ and $V^*\subset \Z^d$.
Let $\mathcal{H}_-$ be the closed halfspace of $\R^d$ determined by $T_0$ and containing the origin,
and let $\mathcal{H}_+ = \R^d \setminus \mathcal{H}_- $.
Consider the set $V_T = V_{-} \cup V_+$ where
$V_{-} :=  \mathcal{H}_- \cap V $ and $V_+ := \mathcal{H}_+ \cap V \cap V^*  $.
Our first goal is to show that $V_T = V$.
By definition $V_T \subset V$, so we need to establish the reverse inclusion.
Consider the function
$$
u_T (x) =  \begin{cases}  u(x) , &\text{if $x \in V_-$}, \\ 
\min\{ u(x), u^* (x) \}   ,&\text{if  $  x \in V_+   $}.  \end{cases}
$$
We claim that
\begin{equation}\label{laplace-u-T}
 \Delta u_T(x) \leq \Delta u(x), \qquad  x\in \INT{V_T}.
\end{equation}
Indeed, fix $x$ in the interior of $V_T$. There are two possibilities.
\vspace{0.1cm}

\noindent \textbf{Case 1.} $x\in V_-$. 

In this case (\ref{laplace-u-T}) follows from
\begin{align*}
\Delta u_T(x) = \frac{1}{2d} \sum\limits_{ y\sim x, \  y\in  V_- } u_T(y) +\frac{1}{2d} \sum\limits_{ y\sim x, \ y\in  V_+ } u_T(y) - u(x) &\leq  \\
\frac{1}{2d} \sum\limits_{ y\sim x, \  y\in  V_- } u(y) +\frac{1}{2d} \sum\limits_{ y\sim x, \ y\in  V_+ } u(y) - u(x) &= \Delta u(x).
\end{align*}

\vspace{0.1cm}

\noindent \textbf{Case 2.} $x\in V_+$. 

We have two sub-cases here, namely either $u_T(x) = u(x)$ or $u_T(x) = u^*(x)$.
In the former case, (\ref{laplace-u-T}) follows as in Case 1 above. 
We will therefore assume that
\begin{equation}\label{u-T-as-min}
 u_T(x) = \min\{u(x), u^*(x) \} = u^*(x).
\end{equation}
Then we have 
\begin{align*}
\numberthis \label{u-T-case2} 2d \Delta u_T(x) =  \sum\limits_{ y\sim x, \  y\in  V_- } u_T(y) + \sum\limits_{ y\sim x, \ y\in  V_+ } u_T(y) - 2d u^*(x) &\leq  \\
 \sum\limits_{ y\sim x, \  y\in  V_- } u(y) + \sum\limits_{ y\sim x, \ y\in  V_+ } u^*(y) - 2d u^*(x) &= \\
\sum\limits_{ y\sim x, \  y\in  V_- } u(y) + \sum\limits_{ y^*\sim x^*, \ y\in  V_+ } u(y^*) - 2d u(x^*) .
\end{align*}
Observe, that if the first sum on the last row of (\ref{u-T-case2}) is empty, then we get $\Delta u_T(x) \leq \Delta u(x^*)$.
Otherwise, there are two possible scenarios treated below.

The first one occurs when the hyperplane $T$ is orthogonal to a coordinate axis, i.e. $T$ is one of $\{x\in \R^d: \ x_i=1/2\}$ for some $1\leq i \leq d$.
By assumption there is $y\in V_-$ such that $y\sim x\in V_+$.
But due to the choice of $T$ this is possible if and only if $y=x^*$ and $x^* \sim x$.
In particular, the above-mentioned sum contains a single element,
namely the one arising from $y=x^*$. Taking into account (\ref{u-T-as-min}), from \eqref{u-T-case2} we obtain $\Delta u_T(x) \leq \Delta u(x^*)$.

Consider the second scenario, when the symmetry hyperplane $T$ is not orthogonal to any of the coordinate axes,
and assume that there exists $y\in V_-$ such that $y\sim x\in V_+$.
Here as well, in view of the choice of $T$, such $y$ must lie on the hyperplane $T_0$,
and hence $y\sim x^*$. Thus the first sum in the last row of \eqref{u-T-case2}
covers all lattice neighbours of $x^*$ lying on $T_0$.
Observe that all lattice neighbours of $x^*$ are in $V_-$, so they are either on $T_0$,
or are obtained as reflections of lattice neighbours of $x$ from $V_+$.
The first class of neighbours is covered by the first sum, and the second class by the second sum of  of the last row of \eqref{u-T-case2}.
Hence, the last row of \eqref{u-T-case2} equals $2d \Delta u(x^*)$.

We conclude that in Case 2
\begin{equation}\label{u-T-x-star}
\Delta u_T(x) \leq \Delta u(x^*).
\end{equation}
Finally, notice that for $x\in V_+$ lying in the interior
of $V_T$ we have that $x^*$ is from the interior of $V_T$.
Hence, $\Delta u(x^*) \in \{0, -n\}$ since the origin is in $V_-$.
We thus get \eqref{laplace-u-T} from \eqref{u-T-x-star}.
Next, we proceed to the final stage of the proof, where we show that $u\leq u_T$ everywhere on $\Z^d$.

It is easy to observe that $u_T =0$ on $\partial V_T$
and $\Delta u_T  \leq \CAP $ on  $\partial V_T$.
Now let $v$ be the solution to $\Delta v = -n \delta_0$ in the interior of $V_T$
and $v=0$ on $\partial V_T$.
By DMP we get $v\leq u_T$ everywhere on $V_T$,
hence $\Delta v \leq \Delta u_T \leq \CAP$ on $\partial V_T$  
as both $v$ and $u_T$ vanish on the boundary of $V_T$.
It follows that the pair $(V_T, v)$ is stabilizing for $\mathrm{BS}(n\delta_0, \CAP)$ (notice that the origin lies in $V_T$),
hence the minimality principle due to Theorem \ref{Thm-canonical} in conjunction with the inclusion $V_T \subset V$
implies $V_T = V$.
In particular we get $u\leq u_T$ everywhere on $V$ since the difference $u_T-u$ defines a supersolution in $V$
with zero boundary data.
Lastly, since both $u$ and $u_T$ vanish outside $V$, we get that $u\leq u_T$ on $\Z^d$.
From here, the desired inequality follows easily.
Indeed, since $|X_1|\leq |X_2|$ and the origin lies in $V_-$,
we must have $X_2\in V_+$. Due to construction we also have $X_1 = (X_2)^*$, 
hence 
$$
u(X_2) \leq  u_T(X_2) = \min\{ u(X_1), u(X_2) \} \leq u(X_1).
$$
The proof of the theorem is completed.
\end{proof}

\subsection{Applications to classical Abelian sandpile}\label{subsec-ASM}

The main aim of this subsection is the analysis of the well-known \emph{Abelian sandpile model} (ASM) 
via tools developed in this paper. In particular,
we will show that the scaling limit of ASM generated from an initial configuration of chips placed
at a single vertex has Lipschitz boundary.
While some aspects of the local (inner) geometry of ASM were analysed in \cite{Lev-Peg-Smart},
we believe that this is the first occurrence addressing the geometry of the boundary.

We briefly recall the definition of the process.
ASM is a lattice growth model for configurations of chips
distributed on vertices of $\Z^d$. A vertex carrying at least $2d$ chips topples
giving a single chip to all its $2d$ lattice neighbours, and losing $2d$ chips itself.
If there are no sites with more than $2d-1$ chips, the process terminates.
For any finite non-negative initial configuration of chips, subsequently toppling
all sites with at least $2d$ chips, the process terminates in finite steps.
This process is abelian in the sense that the final configuration of chips is independent of the order of topplings.

Fix $n\in \N$, and let $u$ be the odometer function for the starting configuration of chips $n\delta_0$.
That is, for each $x\in \Z^d$ the value $u(x)$ is the number of topplings required to reach stable configuration in ASM.
Due to abelian property the function $u_n$ is well-defined.
Let also $s :\Z^d \to \Z_+$ be the final (stable) configuration of chips.
Similarly to (\ref{Laplace-of-odometer}) one can easily see that
$$
s(x) = n\delta_0 + \sum\limits_{y\sim x} \left( u(y) - u(x) \right) = 
n\delta_0 + 2d \Delta u(x), \qquad x\in \Z^d.
$$
We will need the following characterization proved by Fey, Levine, and Peres \cite{FLP}.

\vspace{0.2cm}

\noindent \textbf{Least Action Principle}. Let $n$ and $u$ be as above.
Then
$$
u = \min \{ w: \Z^d \to \Z_+: \ n\delta_0 + 2d \Delta w \leq 2d-1 \}.
$$

\vspace{0.2cm}

We have the analogue of Theorem \ref{Thm-monotonicity} for ASM.
For the definition of $\mathcal{S}$ see the paragraph above Theorem \ref{Thm-monotonicity}.

\begin{theorem}\label{Thm-monotonicity-ASM}{\normalfont{(Directional monotonicity for ASM)}}
For a given $n\in \N$ let $u$ be the odometer for initial distribution $n\delta_0$.
Fix any hyperplane $T\in \mathcal{S}$.
Then, for any $X_1, X_2 \in \Z^d$, such that $X_1 - X_2$ is a non-zero vector orthogonal to $T$,
we have
$$
u(X_1) \geq u(X_2) \ \ \text{ if } \ \ |X_1| \leq |X_2|.
$$
\end{theorem}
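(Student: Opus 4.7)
The plan is to mirror the proof of Theorem~\ref{Thm-monotonicity} almost line by line, with the Least Action Principle replacing the Minimality Principle as the source of the minimality argument. Given $X_1, X_2 \in \Z^d$ with $X_1 - X_2$ a non-zero vector orthogonal to $T \in \mathcal{S}$ and $|X_1| \leq |X_2|$, I take $T_0$ to be the translate of $T$ equidistant from $X_1$ and $X_2$; the same elementary check as in the BS case shows that reflection $x \mapsto x^*$ in $T_0$ preserves $\Z^d$ and swaps $X_1, X_2$. Let $\mathcal{H}_-$ be the closed halfspace bounded by $T_0$ containing the origin, and $\mathcal{H}_+ := \R^d \setminus \mathcal{H}_-$; the hypothesis $|X_1| \leq |X_2|$ forces $X_1 \in \mathcal{H}_-$ and $X_2 \in \mathcal{H}_+$. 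With $u^*(x) := u(x^*)$, define
$$
u_T(x) = \begin{cases} u(x), & x \in \mathcal{H}_-, \\ \min\{u(x), u^*(x)\}, & x \in \mathcal{H}_+, \end{cases}
$$
which is $\Z_+$-valued and of compact support.

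The crux of the argument is the verification that $u_T$ is admissible in the Least Action Principle, i.e.,
$$
n\delta_0(x) + 2d\,\Delta u_T(x) \leq 2d-1 \qquad \text{for every } x \in \Z^d,
$$
and I would carry this out via the same case split used in the proof of Theorem~\ref{Thm-monotonicity}. When $x \in \mathcal{H}_-$, or when $x \in \mathcal{H}_+$ with $u_T(x) = u(x)$, the pointwise bound $u_T \leq u$ together with the equality $u_T(x) = u(x)$ gives $\Delta u_T(x) \leq \Delta u(x)$, and therefore $n\delta_0(x) + 2d\,\Delta u_T(x) \leq s(x) \leq 2d-1$ by stability of the final configuration. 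The genuinely interesting case is $x \in \mathcal{H}_+$ with $u_T(x) = u^*(x) < u(x)$; here the same neighbor-by-neighbor bookkeeping as in Theorem~\ref{Thm-monotonicity}, separately handling the coordinate-axis case $T \perp e_i$ and the diagonal case $T \perp e_i \pm e_j$ (in which latter case any edge from $x$ crossing into $\mathcal{H}_-$ has its far endpoint on $T_0$, hence a fixed point of the reflection), yields $\Delta u_T(x) \leq \Delta u(x^*)$. Since $x \in \mathcal{H}_+$ forces $x \neq 0$, this gives $n\delta_0(x) + 2d\,\Delta u_T(x) = 2d\,\Delta u_T(x) \leq s(x^*) - n\delta_0(x^*) \leq 2d - 1$, which remains valid even when $x^* = 0$.

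With admissibility in hand, the Least Action Principle yields $u \leq u_T$ pointwise on $\Z^d$, and evaluating at $X_2 \in \mathcal{H}_+$ gives
$$
u(X_2) \leq u_T(X_2) = \min\{u(X_1), u(X_2)\} \leq u(X_1),
$$
which is the desired directional monotonicity. The main obstacle, exactly as for Theorem~\ref{Thm-monotonicity}, is the neighbor-counting in the diagonal scenario of the delicate case; however, this step is a purely combinatorial statement about reflection symmetries of $\Z^d$, independent of the particular sandpile model, so it transfers from the BS proof with no essential change.
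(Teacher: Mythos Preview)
Your proposal is correct and follows essentially the same route as the paper's proof: define the reflected-and-truncated function $u_T$, verify it is admissible for the Least Action Principle via the same case analysis as in Theorem~\ref{Thm-monotonicity}, and conclude $u \leq u_T$. The paper in fact presents the argument as a sketch and explicitly refers back to the two cases of Theorem~\ref{Thm-monotonicity}, so your write-up is, if anything, slightly more detailed than the original.
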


\begin{proof}[Proof of Theorem \ref{Thm-monotonicity-ASM}] 
We will only sketch the proof as it is a simplification of the proof of Theorem \ref{Thm-monotonicity}.
Assuming $|X_1| \leq |X_2|$ will show that $u(X_1) \geq u(X_2)$,
which implies the claim of the theorem.

Let $T_0$ be as in the proof of Theorem \ref{Thm-monotonicity}, and
for a given $x\in \R^d$ denote by $x^*$ its mirror reflection with respect to the hyperplane $T_0$.
Recall that we have $X_1 = (X_2)^*$.
Define also the reflected odometer function by $u^*(x) : = u(x^*)$, where $x\in \Z^d$.
We have that $u^*$ is defined on $\Z^d$ with values in $\Z_+$.
Let $\mathcal{H}_-$ be the closed halfspace of $\R^d$ determined by $T_0$ and containing the origin,
and let $\mathcal{H}_+ = \R^d \setminus \mathcal{H}_- $.
Set $V_{-} :=  \mathcal{H}_- \cap \Z^d $, $V_+ := \mathcal{H}_+ \cap \Z^d  $,
and define the function
$$
u_T (x) =  \begin{cases}  u(x) , &\text{if $x \in V_-$}, \\ 
\min\{ u(x), u^* (x) \}   ,&\text{if  $  x \in V_+   $}.  \end{cases}
$$
We aim at showing that
\begin{equation}\label{laplace-u-T-ASM}
n \delta_0+ 2d \Delta u_T(x) \leq 2d-1, \qquad  x\in \Z^d.
\end{equation}
First observe, that this inequality clearly implies the result of the theorem.
Indeed, from (\ref{laplace-u-T-ASM}) and the least action principle formulated above 
we obtain $u \leq u_T$ on $\Z^d$.
Next, since $|X_1|\leq |X_2|$ and the origin lies in $V_-$,
we must have $X_2\in V_+$. Due to construction we also have $X_1 = (X_2)^*$, 
hence 
$$
u(X_2) \leq u_T(X_2)  = \min\{ u(X_1), u(X_2) \} \leq u(X_1),
$$
and the result follows. It is thus left to establish (\ref{laplace-u-T-ASM}), which we do next.
Fix any $x\in \Z^d$. There are two possibilities.
\vspace{0.1cm}

\noindent \textbf{Case 1.} $x\in V_-$. \  Similarly to Case 1 of Theorem \ref{Thm-monotonicity} we get
$$
2d \Delta u_T(x) \leq 2d \Delta u(x) \leq 2d-1 -n \delta_0,
$$
where the last inequality is due to definition of $u$.
\vspace{0.1cm}

\noindent \textbf{Case 2.} $x\in V_+$. \ Exactly, as in Case 2 of Theorem \ref{Thm-monotonicity} we get
$$
2d \Delta u_T(x) \leq 2d \Delta u(x^*) \leq 2d-1.
$$

The proof of the theorem is completed.
\end{proof}

For each integer $n\in \N$ let $u_n$ and $s_n$ be respectively the odometer, and final configuration 
of chips for initial distribution $n\delta_0$. 
Define $\overline{u}_n(x)=n^{-2/d} u_n(n^{1/d} x) $ and $\overline{s}_n (x) = s_n(n^{1/d}x)$
where $x\in n^{-1/d} \Z^d$. It was proved by Pegden and Smart \cite{PS}
that there exists a non-negative and compactly supported function $u_0\in C(\R^d\setminus\{0\})$, $s_0\in L^\infty(\R^d)$ satisfying $0\leq s_0\leq 2d-1$
such that $\overline{u}_n \to u_0$ locally uniformly in $\R^d\setminus\{0\}$,
$\overline{s}_n \to s_0$ weak* in $L^{\infty}(\R^d)$ and $\Delta u_0 = s_0 - \delta_0$
in the sense of distributions, where $\Delta$ is continuous Laplacian and $\delta_0$   is Dirac delta at the origin.  The reader is referred to \cite{PS} for the details. 

Concerning ASM, we prove  the following statement.

\begin{theorem}\label{Thm-ASM}
For  $u_0$  as above set
$u_0(0)=+\infty$ and let the set of directions $\mathcal{N}$ be defined as in \eqref{dir-N}.
The following is true:
\begin{itemize}
\item[{\normalfont(a)}] for any $X_1, X_2 \in \R^d$ with the property that $X_2-X_1$ is non-zero and is collinear to any of the vectors of $\mathcal{N}$,
we have
$$
u_0(X_1) \geq u_0(X_2) \ \ \text{ if } \ \ |X_1| \leq |X_2|.
$$
\item[{\normalfont(b)}] $\partial \{u_0>0\}$ is a locally Lipschitz graph.
\end{itemize}
\end{theorem}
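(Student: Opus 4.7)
The plan is to derive part~(a) by passing to the limit in the discrete monotonicity (Theorem~\ref{Thm-monotonicity-ASM}), and then to obtain part~(b) from part~(a) via star-shapedness of $\Omega:=\{u_0>0\}$ together with a uniform exterior cone condition. For part~(a), assume $X_2-X_1=\alpha v$ with $v\in\mathcal{N}$, $\alpha>0$, and first treat the strict case $|X_1|<|X_2|$ (with $X_1,X_2\ne 0$). Choose $\xi_1^n\in n^{-1/d}\Z^d$ with $\xi_1^n\to X_1$, and set $\xi_2^n:=\xi_1^n+\lfloor\alpha n^{1/d}\rfloor n^{-1/d}v$; then $n^{1/d}\xi_1^n,n^{1/d}\xi_2^n\in\Z^d$, their difference is an integer multiple of $v\in\mathcal{N}$, and $\xi_2^n\to X_2$. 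For $n$ large we have $|\xi_1^n|<|\xi_2^n|$, so Theorem~\ref{Thm-monotonicity-ASM} yields $\overline{u}_n(\xi_1^n)\ge \overline{u}_n(\xi_2^n)$. The locally uniform convergence $\overline{u}_n\to u_0$ on $\R^d\setminus\{0\}$ of Pegden--Smart, combined with continuity of $u_0$, gives $u_0(X_1)\ge u_0(X_2)$. The equality case $|X_1|=|X_2|$ reduces to the strict case by a perturbation $X_2\mapsto X_2\pm\epsilon v$ with sign chosen so that $|X_2\pm\epsilon v|>|X_1|$, followed by $\epsilon\to 0^+$ and continuity.

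For part~(b) I first show that $\Omega$ is star-shaped with respect to the origin. Given $X_0=(x_1,\ldots,x_d)\in\Omega$ and $t\in(0,1)$, travel from $X_0$ to $tX_0$ in $d$ axis-parallel segments, contracting one coordinate at a time by the factor $t$ toward $0$. Each segment has direction $\pm e_i\in\mathcal{N}$ and $|\cdot|$ decreases strictly along it, so part~(a) yields $u_0\ge u_0(X_0)>0$ throughout the path, and hence $tX_0\in\Omega$.

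Next fix $X_0^*\in\partial\Omega\setminus\{0\}$, let $\Sigma^+_*$ denote the finite set of unit vectors $w$ collinear to some $v\in\mathcal{N}$ with $\langle X_0^*,w\rangle>0$, and let $\mathcal{K}^+_*$ be the convex cone generated by $\Sigma^+_*$; a short argument (using that the $e_i$'s are in $\mathcal{N}$) shows this cone has non-empty interior whenever $X_0^*\ne 0$. I claim there exist $r>0$ and a neighborhood $U$ of $X_0^*$ such that
\[
(X_0+\mathcal{K}^+_*)\cap B_r(X_0)\subset\{u_0=0\}\qquad\text{for all } X_0\in\partial\Omega\cap U.
\]
This is proved by induction on the number $k$ of generators used to express $Y=X_0+\sum_{i=1}^{k}c_iw_i$ with $w_i\in\Sigma^+_*$, $c_i\ge 0$, $\sum_i c_i\le r$. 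The base case $k=0$ uses $u_0(X_0)=0$ by continuity. For the induction step, set $Y':=X_0+\sum_{i<k}c_iw_i$; by induction $u_0(Y')=0$, and part~(a) applied along the line through $Y'$ in direction $w_k$ gives $u_0(Y)\le u_0(Y')=0$ provided $|Y|>|Y'|$, i.e.\ $\langle Y',w_k\rangle>-c_k/2$. Since $\langle X_0^*,w_k\rangle$ is bounded below by a positive constant on the finite set $\Sigma^+_*$, this inequality holds when $X_0$ is close to $X_0^*$ and $\sum_i c_i$ is small.

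Finally, pick an axis $\eta\in\mathrm{int}(\mathcal{K}^+_*)$ with half-opening angle $\theta>0$. The uniform exterior cone condition implies that for any $p_1,p_2\in\partial\Omega\cap U$ neither $p_2-p_1$ nor $p_1-p_2$ lies in $\mathrm{int}(\mathcal{K}^+_*)$, and a direct computation converts this into $|\langle p_2-p_1,\eta\rangle|\le \cot(\theta)\,\|(p_2-p_1)-\langle p_2-p_1,\eta\rangle\eta\|$. Hence the orthogonal projection $\partial\Omega\cap U\to\eta^\perp$ is injective with Lipschitz inverse of constant $\cot\theta$; the star-shapedness from the previous step ensures that its image covers a full neighborhood of the projection of $X_0^*$, identifying $\partial\Omega$ locally as the graph of a Lipschitz function. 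The main obstacle is the inductive step above: pairs of generators $w_i,w_j\in\Sigma^+_*$ can have negative pairwise inner products, so $\langle Y',w_k\rangle$ may drift as steps accumulate, forcing us to work in a small ball and neighborhood. Happily, locality is exactly what is required for a locally Lipschitz graph.
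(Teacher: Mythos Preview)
Your argument for part~(a) is correct and matches the paper's. For part~(b) the overall plan is sound, but there is one genuine gap and one structural difference from the paper worth noting.

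\textbf{The gap.} In the last paragraph you assert that star-shapedness guarantees the projection $\partial\Omega\cap U\to\eta^\perp$ covers a full neighborhood of $\pi(X_0^*)$. This is not justified: star-shapedness with respect to the origin controls radial segments, not lines in the generally non-radial direction $\eta$, and an exterior cone by itself does not force $\partial\Omega$ to be a full graph rather than merely contained in one. What you actually need is the \emph{interior} cone $(X_0^*-\mathrm{int}(\mathcal{K}^+_*))\cap B_r(X_0^*)\subset\Omega$. Fortunately this follows from your own induction, whose base case uses only $u_0(X_0)=0$ and not $X_0\in\partial\Omega$: the same chaining gives $(Y+\mathcal{K}^+_*)\cap B_r(Y)\subset\{u_0=0\}$ for \emph{every} $Y$ near $X_0^*$ with $u_0(Y)=0$. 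Hence if some $Y$ in the interior of $X_0^*-\mathcal{K}^+_*$ satisfied $u_0(Y)=0$, then $X_0^*\in Y+\mathrm{int}(\mathcal{K}^+_*)$ would force $u_0\equiv 0$ on a neighborhood of $X_0^*$, contradicting $X_0^*\in\partial\Omega$. This is precisely the contradiction argument the paper uses (see the end of the proof of Theorem~\ref{Thm-scaling-limit}(v)), and once you have both cones the graph property is immediate; the star-shapedness detour is then unnecessary.

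\textbf{The difference in execution.} The paper does not take $\Sigma^+_*$ to be \emph{all} directions in $\mathcal{N}$ with positive inner product against $X_0^*$. Instead, after reducing by symmetry to $x_d^{(0)}\ge x_i^{(0)}\ge 0$, it hand-picks a small set $\mathcal{V}\subset\mathcal{N}$ (always $e_d$, together with $e_d\pm e_i$ when $x_d^{(0)}\ge 2x_i^{(0)}$ and $e_i$ when $x_d^{(0)}\le 3x_i^{(0)}$) so that a fixed ball $B_{c_1}(X_0^*)$ lies inside $\mathcal{C}_0=\bigcap_{v\in\mathcal{V}}\{x:\langle x,v\rangle>0\}$. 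Inside $\mathcal{C}_0$ the monotonicity steps chain with no sign issues, so no inductive control of $\langle Y',w_k\rangle$ is needed. Your approach trades that case analysis for the drift bookkeeping; both routes yield the exterior cone, but the paper's choice makes the two-sided cone fall out in one stroke.
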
 

The reason for defining $u_0(0)=+\infty$ is mainly for the mathematical rigour in the proof.
Apart from monotonicity of the limiting odometer function, this result shows that the
boundary of the scaling limit is Lipschitz.
We postpone the proof of this theorem until subsection \ref{subsec-scaled-od}.

\begin{remark}
This method seems to be promising also for Abelian Sandile model with background heights \cite{FLP}.
\end{remark}

\subsection{The true scale of the model} 
The main aim of this section is to study the growth of the boundary sandpile process
for point masses.

\subsubsection{Heuristics}\label{sub-sec-Heuristics} 
It seems intuitively clear that the value $\CAP$ of boundary capacity should influence the size of the visited
sites. Specifically, the smaller $\CAP$ the greater number of boundary points will be necessary in the stable
configuration, resulting in a larger domain of visited sites.
Our aim is to determine, for initial distributions concentrated at the origin,
the size of the boundary capacity which will leave a chance for existence of a scaling limit.
To proceed, let $(V_n, u_n)$ be the stabilizing pair for $\mathrm{BS}(n\delta_0, \CAP)$, with $n>1$ large.
Let $\sigma_n$ be the mass distribution in the stable
configuration. By definition we have that $\sigma_n$ is concentrated on $\partial V_n$,
and the odometer function $u_n $ satisfies
$$
\Delta u_n(x) = \sigma_n(x) -n \delta_0, \qquad x\in \Z^d.
$$
After scaling the lattice by a small factor $h=h(n)>0$ 
we will get 
$$
\Delta^h \widetilde{u}_n (x) = \sigma_n(h^{-1} x) -n \delta_0, \qquad x\in h \Z^d,
$$
where $\widetilde{u}_n (x) := h^2 u_n(h^{-1} x)$ is the scaled odometer defined on $h\Z^d$.
The scaling limit amount to taking $n \to \infty$. We then want $n\delta_0$ to converge in the sense of distributions
to a continuous Dirac at the origin. Since the volume element at the scale $h>0$ is $h^d dx$
we need $n h^d = 1$ hence the correct order of scaling would be $ h=n^{-1/d}$.
But as $h\to 0$ we want $ h V_n$ to neither shrink to a lower dimensional object, nor escape to infinity,
hence $V_n$ should be of \emph{uniform size}\footnote{By size $h^{-1}$ we mean
that $B(0, c_1 h^{-1})\cap \Z^d \subset V_n \subset B(0, c_2 h^{-1}) $ where $B(0,r)$ is the Euclidean ball
centred at the origin, and having radius $r>0$, and $0<c_1<c_2$ are absolute constants.} $h^{-1}$.
The size of $V_n$ will be determined by $\CAP$.
Indeed, the argument of Lemma \ref{Lem-balls-inside-out} shows that for given $\CAP$, and $n>1$ large, 
$V_n$ would have size $R$ where $\CAP \approx n R^{1-d}$. But for the size $R$ we require $R\approx h = n^{1/d}$,
therefore $\CAP \approx n^{1/d}$ is the only choice for boundary capacity to facilitate this requirement.

In the light of these arguments, in dimension $d\geq 2$ we will set $\CAP=n^{1/d}$ for initial mass $n>1$ at the origin.

\begin{lem}\label{Lem-mono-growth}{\normalfont{(Monotone and unbounded growth)}}
For $n\geq 1$ let $V_n$ be the set of visited sites of $\mathrm{BS}(n\delta_0, n^{1/d})$.
We have the following assertions:
\begin{itemize}
 \item[{\normalfont(a)}]  if $1\leq n <m$ then $V_n\subset V_m$. 
 \vspace{0.2cm}
 \item[{\normalfont(b)}] $\bigcup\limits_{n\geq 1} V_n = \Z^d$. 
\end{itemize}

\end{lem}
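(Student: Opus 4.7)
For part (a) the plan is to apply the minimality principle (Theorem~\ref{Thm-canonical}) via a scaling of the odometer. I let $(V_m,u_m)$ denote the stabilizing pair of $\mathrm{BS}(m\delta_0,m^{1/d})$ and put $u^{\ast}:=(n/m)\,u_m$ on $V_m$. Then $u^{\ast}\geq 0$, $u^{\ast}=0$ on $\partial V_m$, $\Delta u^{\ast}=-n\delta_0$ in $\INT{V_m}$, and on the boundary
\[
n\delta_0+\Delta u^{\ast}=\tfrac{n}{m}\bigl(m\delta_0+\Delta u_m\bigr)\leq\tfrac{n}{m}\,m^{1/d}=n\,m^{1/d-1}.
\]
The capacity bound $n\,m^{1/d-1}\leq n^{1/d}$ is equivalent to $n^{(d-1)/d}\leq m^{(d-1)/d}$, which holds since $n\leq m$ and $(d-1)/d>0$ for $d\geq 2$. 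Hence $(V_m,u^{\ast})$ is a stabilizing pair for $\mathrm{BS}(n\delta_0,n^{1/d})$, and Theorem~\ref{Thm-canonical} forces $V_n\subset V_m$.

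For part (b), writing $V:=\bigcup_{n\geq 1}V_n$ (a monotone union by part~(a)), the plan is to show $V=\Z^d$ by combining mass conservation with the symmetry of Corollary~\ref{cor-O-symm} and the directional monotonicity of Theorem~\ref{Thm-monotonicity}. First I would show $|\INT{V_n}|\to\infty$: the total mass $n$ is carried by $\partial V_n$ with density at most $n^{1/d}$, so $|\partial V_n|\geq n^{(d-1)/d}$, and since in the sandpile dynamics every site of $\partial V_n$ was added to the visited set as a lattice neighbour of some previously toppled (hence interior) site, $|\partial V_n|\leq 2d\,|\INT{V_n}|$; together these give $|\INT{V_n}|\geq\tfrac{1}{2d}\,n^{(d-1)/d}\to\infty$. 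Consequently $L_n:=\max\{\|y\|_\infty:y\in\INT{V_n}\}\to\infty$, since $\INT{V_n}\subset[-L_n,L_n]^d$.

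The crucial step is to convert $L_n\to\infty$ into inclusion of a growing lattice cube around the origin. Applying Theorem~\ref{Thm-monotonicity} along the axial directions $e_i\in\mathcal{N}$ I would establish a \emph{box-monotonicity property} for the interior: if $y\in\INT{V_n}$ has $y_i\geq 0$ for all $i$, then $\prod_{i=1}^{d}[0,y_i]\cap\Z^d\subset\INT{V_n}$, because each coordinate-lowering step shortens the distance to the origin. Using Corollary~\ref{cor-O-symm} to bring an $L_n$-maximiser into the positive orthant with $L_n$ in the first slot, box-monotonicity yields $(L_n,0,\ldots,0)\in\INT{V_n}$. Next, applying Theorem~\ref{Thm-monotonicity} along the diagonal directions $e_1-e_j\in\mathcal{N}$, $j=2,\ldots,d$: on each such line the point closest to the origin has its two active coordinates \emph{balanced}, so that balanced point lies in $\INT{V_n}$. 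Iterating this balancing produces an interior point whose coordinates are all at least $c_dL_n$ for some $c_d=c_d(d)>0$ (the crude choice $c_d=2^{-(d-1)}$ suffices); a final box-monotonicity step combined with sign-permutation symmetry then gives
\[
V_n\;\supset\;[-c_dL_n,c_dL_n]^d\cap\Z^d,
\]
so every fixed $x\in\Z^d$ lies in $V_n$ for $n$ large, proving $V=\Z^d$.

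The main obstacle is the diagonal-balancing argument in the last paragraph: Theorem~\ref{Thm-monotonicity} is only available along directions in $\mathcal{N}$, so producing a single interior point with all coordinates comparable to $L_n$ requires a careful iterative choice of which pair of coordinates to balance at each stage. Everything else reduces to the minimality principle, mass conservation, and the lattice symmetries of the model.
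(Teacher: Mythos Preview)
Your proof of (a) is correct and is the paper's argument in slightly different clothing: the paper introduces weights $\alpha_i$ (the fraction of the initial mass landing at each $x^{(i)}\in\partial V_m$) and notes $\alpha_i n\leq m^{1/d-1}n\leq n^{1/d}$, which is exactly your scaled-odometer computation rewritten on the boundary.

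For (b) your argument is valid but considerably more elaborate than the paper's. The paper simply reuses the linearity of (a) in the other direction: fix $n$ and the weights $\alpha_i>0$ attached to $\partial V_n$; by Lemma~\ref{Lem-interim} one may first route mass $m>n$ onto $\partial V_n$, where each $x^{(i)}$ receives $\alpha_i m$. Since the $\alpha_i$ are fixed while $m^{1/d}/m\to 0$, for $m$ large every $\alpha_i m>m^{1/d}$, so all of $\partial V_n$ must topple in $\mathrm{BS}(m\delta_0,m^{1/d})$; hence $V_m$ strictly contains $V_n$, and iterating gives $\bigcup_n V_n=\Z^d$. Your route instead extracts $|\INT{V_n}|\gtrsim n^{(d-1)/d}$ from mass conservation and then invokes the heavier machinery of Theorem~\ref{Thm-monotonicity} and Corollary~\ref{cor-O-symm} to turn a single far interior point into a full cube around the origin. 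The diagonal-balancing step does work (each move along $e_i-e_j$ to the norm-minimiser halves one coordinate while keeping the odometer positive, so $c_d=2^{-(d-1)}$ is fine), but it brings in directional monotonicity where a two-line scaling argument suffices. What your approach buys is an explicit quantitative inclusion $[-c_d L_n,c_d L_n]^d\cap\Z^d\subset V_n$ with $L_n\to\infty$; the paper's argument gives only qualitative strict growth at this stage, deferring the precise scale to Lemma~\ref{Lem-balls-inside-out}.
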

\begin{proof}
We start with (a). Let $\partial V_m = \{x^{(i)}\}_{i=1}^k$ for some $k\in \N$. Since the Laplacian is a linear operator (cf. \eqref{sink-distrib}),
there is a collection of weights $\{\alpha_i\}_{i=1}^k$ with all $\alpha_i>0$ such that
for any mass $N>0$ at the origin and $\partial V_m$ as target set, 
each site $x^{(i)}$ receives mass $\alpha_i N$.

By definition we have that $V_m $ is stable, hence $\alpha_i m \leq \CAP(m)=m^{1/d}$
and we get $\alpha_i \leq m^{1/d-1}$ for all $1\leq i\leq k$.
From here it follows 
$$
\alpha_i n \leq m^{1/d -1 } n \leq n^{1/d -1 } n = n^{1/d} = \CAP(n),
$$
which means that $V_m$ is stable for mass $n$ at the origin. But as $V_n$
is the smallest among all stable domains for mass $n$ in view of Theorem \ref{Thm-canonical}, we get $V_n \subset V_m$,
completing the proof of part (a).

We proceed to part (b). Let $n\geq 1$ be fixed, then in view of (a) we have $V_n \subset V_m$
for any $m>n$. Hence from Lemma \ref{Lem-interim} to produce $V_m $ we may topple
mass $m$ from the origin to $\partial V_n$ and then stabilize $\partial V_n$ in $V_m$.
As in part (a) we know that each $x^{(i)} \in \partial V_n$ gets mass $\alpha_i m$.
Therefore, for $m>n$ large enough, we will get
$\alpha_i m >m^{1/d } = \CAP(m)$ for all $1\leq i \leq k$. As a result, all points on $\partial V_n$
have to topple in order to stabilize $V_m$. This proves (b),
and finishes the proof of the lemma.
\end{proof}

\begin{lem}\label{Lem-no-touch}
For $n\geq 1$ let $(V,u)$ be the stabilizing pair of $\mathrm{BS}(n \delta_0, \CAP)$.
If $B$ is a subset of $V$ containing the origin in its interior and $v:B \to \R$ is such that 
 $$
  \begin{cases}  \Delta v  \geq -n \delta_0, &\text{{\normalfont{in}} $\INT{B}$}, \\ 
  v=0  ,&\text{{\normalfont{on}}  $ \partial B    $}, \\
 \Delta v >\CAP, &\text{{\normalfont{on}} $\partial B$},    \end{cases}
 $$
then $\partial B \cap \partial V =\emptyset$.
\end{lem}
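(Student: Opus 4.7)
The plan is to argue by contradiction: assume there exists $x_0 \in \partial B \cap \partial V$, compare $u$ and $v$ on $B$ by a maximum-principle argument, and then extract a contradiction from the stability of $u$ at $x_0$.

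First, I would observe the lattice-topological fact that $\INT{B} \subset \INT{V}$. Indeed, if $x \in \INT{B}$ then every neighbor of $x$ lies in $B \subset V$, so $x$ cannot be in $\partial V$. Consequently, by \eqref{Laplace-of-odometer} applied to the final configuration (which vanishes in $\INT{V}$), one has $\Delta u = -n\delta_0$ throughout $\INT{B}$. Moreover, since the origin is in $\INT{B}$, the point $x_0 \in \partial B \cap \partial V$ is not the origin, and so $\mu_0(x_0) = 0$; stability of $(V,u)$ then gives $\Delta u(x_0) \le \CAP$.

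Next I would extend $v$ by $0$ outside $B$ and compare with $u$. The function $\phi := u - v$ satisfies $\phi \ge 0$ on $\partial B$ (because $v=0$ there and $u \ge 0$), and on $\INT{B}$
\[
\Delta \phi = \Delta u - \Delta v \le -n\delta_0 - (-n\delta_0) = 0,
\]
so $\phi$ is superharmonic on $\INT{B}$. The DMP applied to $-\phi$ yields $\phi \ge 0$ throughout $B$, i.e.\ $u \ge v$ on $B$. Combined with the extension $v \equiv 0$ outside $B$ and $u \ge 0$ everywhere, this gives $v \le u$ on all of $\Z^d$.

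Finally, at $x_0$ we have $u(x_0) = 0$ (since $x_0 \in \partial V$) and $v(x_0) = 0$ (since $x_0 \in \partial B$), hence
\[
\Delta v(x_0) = \frac{1}{2d}\sum_{y \sim x_0} v(y) \le \frac{1}{2d}\sum_{y \sim x_0} u(y) = \Delta u(x_0) \le \CAP,
\]
which contradicts the hypothesis $\Delta v(x_0) > \CAP$. Therefore $\partial B \cap \partial V = \emptyset$.

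The only delicate point is the first paragraph: making sure that $\INT{B} \subset \INT{V}$ so that $\Delta u = -n\delta_0$ can be used on $\INT{B}$, and that the candidate point $x_0$ is not the origin (so that $\mu_0(x_0)=0$ and the stability inequality reads cleanly $\Delta u(x_0)\le\CAP$). Once these lattice bookkeeping items are in place, the rest is a direct application of DMP and the strict inequality in the hypothesis.
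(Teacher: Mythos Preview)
Your argument is correct and follows essentially the same route as the paper's proof: assume a common boundary point $x_0$, use DMP on $u-v$ to get $u\ge v$ in $B$, and then compare Laplacians at $x_0$ to derive the contradiction $\CAP<\Delta v(x_0)\le\Delta u(x_0)\le\CAP$. Your version is in fact slightly more careful than the paper's, which glosses over the points you flag (that $\INT{B}\subset\INT{V}$ so $\Delta u=-n\delta_0$ on $\INT{B}$, that $x_0\neq 0$ so $\mu_0(x_0)=0$, and the extension of $v$ by $0$ outside $B$ so that $\Delta v(x_0)$ is well defined and the neighbour-by-neighbour comparison goes through).
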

\begin{proof}
Assume for contradiction that there exists $x_0\in \partial V \cap \partial B$,
and consider the function $w=u-v$ with $u,v$ as above.
Then clearly $\Delta w \leq 0 $ in $\INT{B}$ and $w \geq 0$ on $\partial B$,
hence by DMP we get $w\geq 0$ in $B$, i.e. $u\geq v$ in $B$.
Since both $u$ and $v$ vanish at $x_0$ we obtain
$$
\CAP< \Delta v(x_0) = \frac{1}{2d} \sum\limits_{y\sim x_0} v(y) \leq \frac{1}{2d} \sum\limits_{y\sim x_0} u( y) = \Delta u(x_0) \leq \CAP,
$$
where the last inequality follows by stability of the sandpile.
This leads to a contradiction and completes the proof.
\end{proof}

For $R>0$ let $G_R$ be the Green's
function with pole at the origin for the discrete ball $Z_R$
defined in (\ref{def-Z-R}), i.e.
\begin{equation}\label{Green-def}
  \Delta G_R = -\delta_0 \text{ in } \INT{Z_R} \qquad \text{and} \qquad G_R =0 \text{ on } \partial Z_R.
\end{equation}

\begin{lem}\label{Lem-Greens-Laplace}
There exist constants $R_0>0$ depending on dimension only,
and $0<c_1<C_1$ depending on $R_0$ and dimension $d$,
such that for any $R>R_0 $ one has
$$ 
c_1 R^{1-d}< \Delta G_R (x) < C_1 R^{1-d} \qquad \forall x\in \partial Z_R.
$$
\end{lem}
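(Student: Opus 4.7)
The plan is to exploit the classical asymptotic expansion of the discrete Green's function of a ball in terms of the potential kernel $a(\cdot)$ of the simple random walk on $\Z^d$. Writing $G_R(y) = a_R^{\mathrm{harm}}(y) - a(y)$ on $\INT{Z_R}$, where $a_R^{\mathrm{harm}}$ is the discrete harmonic extension into $\INT{Z_R}$ of the restriction $a|_{\partial Z_R}$, and invoking the well-known estimate $a(y) = c_d |y|^{2-d} + O(|y|^{-d})$ (with $c_2 \log|y|$ replacing $c_d|y|^{2-d}$ in dimension two), one obtains a two-sided bound
$$c\, R^{1-d} \leq G_R(y) \leq C\, R^{1-d}$$
uniformly for every $y\in \INT{Z_R}$ with $R-1 \leq |y| \leq R$, provided $R > R_0$ is sufficiently large. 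The scaling $R^{1-d}$ arises from a one-variable Taylor expansion of $r\mapsto r^{2-d}$ (or $\log r$ in $d=2$) around $r=R$, evaluated at $r=|y|$ with $R-|y|$ of order one.

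With this asymptotic in hand, the conclusion is immediate. For any $x\in \partial Z_R$ the boundary condition in \eqref{Green-def} forces $G_R(x)=0$, hence
$$\Delta G_R(x) = \frac{1}{2d}\sum_{y\sim x} G_R(y).$$
The definition of $Z_R$ and $\partial Z_R$ in \eqref{def-Z-R} ensures that each neighbour $y$ of $x$ falls into one of two categories: either $y\in \INT{Z_R}$, in which case $R-1\leq |y|\leq R$ and $G_R(y)>0$ by the discrete maximum principle, or $y \notin \INT{Z_R}$, in which case $G_R(y)=0$ (by the boundary condition on $\partial Z_R$ and by extending $G_R$ to be zero outside $Z_R$, which is consistent because no such exterior $y$ can have a neighbour of norm at most $R$). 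The first category is non-empty by the definition of $\partial Z_R$ and has at most $2d$ elements. The two-sided asymptotic then gives the upper bound $\Delta G_R(x)\leq C_1 R^{1-d}$ by summing, and the lower bound $\Delta G_R(x)\geq c_1 R^{1-d}$ by using nonnegativity of all terms together with the contribution of at least one neighbour in $\INT{Z_R}$.

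The main technical obstacle is the uniform pointwise asymptotic $G_R(y) \asymp R^{1-d}$ for $y$ one lattice step inside $\partial Z_R$, since the leading term and the classical $O(R^{-d})$ discrete-to-continuous error term are of the same order of magnitude until $R$ is taken large. One resolves this either by appealing to sharper asymptotic expansions for the discrete Green's function of a ball available in the random walk literature, or by a self-contained barrier argument: construct discrete super- and sub-solutions of the form $c^{\pm}\bigl(|y|^{2-d}-R^{2-d}\bigr)$ (with the logarithmic replacement in dimension two), verify that their discrete Laplacians bracket $-\delta_0$ on $\INT{Z_R}$ for appropriate $c^{\pm}$ once $R$ exceeds a dimensional threshold, and conclude by the discrete maximum principle DMP stated at the beginning of this section.
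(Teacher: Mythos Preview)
Your approach coincides with the paper's: decompose $G_R$ via the free Green's function (potential kernel), bound $G_R(y)$ pointwise for $y$ one lattice step inside $\partial Z_R$, and then average over the neighbours of a boundary point. One small slip: with the convention $a(y)\sim c_d|y|^{2-d}$ for $d\geq 3$ (decreasing in $|y|$), the decomposition should read $G_R(y)=a(y)-a_R^{\mathrm{harm}}(y)$, not the reverse; the sign reverses only in $d=2$ with the logarithmic kernel.

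The only substantive difference is in how the lower bound $G_R(y)\geq c\,R^{1-d}$ is secured. You flag the difficulty correctly but then defer to ``sharper asymptotics from the literature'' or a barrier construction. The paper sidesteps both with a one-line probabilistic observation. Writing
\[
G_R(y)=\sum_{z\in\partial Z_R}\mathbb{P}^{y}\bigl[S_{\tau_R}=z\bigr]\,\bigl(g(y,0)-g(z,0)\bigr),
\]
one singles out the neighbour $z_0\in\partial Z_R$ of $y$ lying in the coordinate direction of largest $|y_i|$; for this choice $|z_0|-|y|\geq c_d>0$ and $\mathbb{P}^{y}[S_{\tau_R}=z_0]\geq 1/(2d)$ since the walk can exit in one step. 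That single term already contributes $\geq c\,R^{1-d}$ via the mean-value theorem applied to $r\mapsto r^{2-d}$ (or $\log r$), and every remaining term is nonnegative because $|z|>R\geq |y|$ for all $z\in\partial Z_R$. This removes any need to control the full harmonic measure or to verify discrete sub/super-solution inequalities for $c^{\pm}(|y|^{2-d}-R^{2-d})$, which would require separate care near the origin and with the $O(|y|^{-d-2})$ discretization error of $\Delta |y|^{2-d}$.
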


\begin{proof}
The proof is a straightforward consequence of the random walk counterpart of the problem.
Let $g(x,y):\Z^d \times \Z^d \to \R  $ be the fundamental solution to $\Delta$ in $\Z^d$,
i.e. $\Delta_x g(x,y) = -\delta_0(y-x)$ for all $x,y \in \Z^d$.
The following asymptotics is well-known \cite{Fuk-Uc}, \cite{Uchiyama}, \cite{Lawler-book-walks}
\begin{equation}\label{g-x-y}
 g(x,y) =   \begin{cases}  -\frac{2}{\pi} \log|x-y|  + \gamma_0 + O(|x-y|^{-2}) , &\text{$d=2$}, \\ 
  \frac{2}{(d-2) \omega_d} |x-y|^{2-d} + O(|x-y|^{-d})  , &\text{$d \geq 3$},    \end{cases}
\end{equation}
where $\gamma_0$ is a constant, and $\omega_d $ is the volume of the unit ball in $\R^d$.
By \cite[Proposition 4.6.2]{Lawler-book} 
for each $x\in \INT{Z_R}$ we have
\begin{equation}\label{G-R}
G_R(x) = g(x,0) -  \mathbb{E}^x [ g(S_{\tau_R}, 0 ) ] ,
\end{equation}
where $\tau_R$ is the first exit time from $\INT{Z_R}$ of a simple random walk $\{S_k\}_{k=0}^\infty$ on $\Z^d$
started at $x\in \INT{Z_R}$, i.e. $\tau_R = \min\{k\geq 1: \ S_k\notin \INT{Z_R}\} $.
Expanding the expectation in (\ref{G-R}) and using the fact that
the simple random walk started inside a finite set of $\Z^d$ will almost surely exit it in finite time, 
we get
$$
G_R (x) =  \sum\limits_{z\in \partial Z_R} \mathbb{P}^x [S_{\tau_R} =z ]  \big( g(x,0 ) - g(z,0) \big).
$$
For $x\in \INT{Z_R}$ satisfying $|x|\geq R/2$, applying (\ref{g-x-y}) to the last formula for $G_R$
we obtain
\begin{equation}\label{g1}
G_R(x) = c_d \widetilde{G}_R(x) + O(R^{-d}),
\end{equation}
where 
\begin{equation}\label{g2}
\widetilde{G}_R(x) = \sum\limits_{z\in \partial Z_R} \mathbb{P}^x [S_{\tau_R} =z ]   \begin{cases}  \log \frac{|z|}{|x|} , &\text{$d=2$}, \\ 
  \frac{1}{|x|^{d-2}} - \frac{1}{|z|^{d-2}}  , &\text{$d \geq 3$},    \end{cases}
\end{equation}
and $c_d $ is a constant depending on $d$. Now things are reduced to some elementary computations.

We start with a lower bound of the lemma. Consider the case of $d=2$.
Fix a point $M_0=(a,b) \in \INT{Z_R}$ such that there exists $M_1\sim M_0$ with $M_1 \in \partial Z_R$.
Let $\max\{|a|,|b| \}=|a|$.
In view of the definition of $Z_R$, the neighbour of $M_0$ having the largest norm among all 4 neighbours
must be on the boundary of $Z_R$. Since we also have $|a|\geq |b|$, it follows that as $M_1$ one can take $(a+1,b)$ for $a>0$
and $(a-1,b)$ for $a<0$.
Assume we have $M_1=(a+1,b)$, i.e. $a>0$.
Then
\begin{multline*}
\log \frac{|M_1|}{|M_0|} = 
\frac 12 \log \frac{|M_1|^2}{|M_0|^2} = \frac 12  \log \left( 1+\frac{2a+1}{a^2 + b^2}  \right) = 
\frac 12  \frac{2a+1}{a^2+b^2 } + O(a^{-2}) \geq \\ 
\frac{1}{a} + O(a^{-2}) \geq \frac{1}{\sqrt{2}} \frac{1}{|M_0| } + O(|M_0|^{-2}) \geq \frac{1}{\sqrt{2}} R^{-1} + O(R^{-2}).
\end{multline*}
Observe that $\mathbb{P}^{M_0} \{ S_{\tau_R} =M_1 \} \geq 1/4$
since the random walk can exit from $\INT{Z_R}$ in one step through $M_1$ if started at $M_0$.
On the other hand, from the definition of $Z_R$ we have $|x|<|z|$ for any $x\in \INT{Z_R}$ and any $z\in \partial Z_R$,
hence all terms in the sum of $\widetilde{G}_R$ are non-negative.
Using this, for $M_0 $ and $M_1$ as above we get
$$
\widetilde{G}_R ( M_0 ) \geq  \frac{1}{4} \log \frac{|M_1|}{|M_0|} + O(R^{-2}) \geq 
\frac {1}{4\sqrt{2}}  R^{-1} + O(R^{-2}).
$$
This implies $G_R( M_0 ) \geq c_2 R^{-1}$ for any $M_0 \in \INT{Z_R}$ having a neighbour on $\partial Z_R$.
Since $G_R$ vanishes on $\partial Z_R$ the bound of the lemma follows.

The lower bound in case of $d\geq 3$ is handled similarly.
The upper bound of the lemma follows directly from (\ref{g1})-(\ref{g2}) and definition
of the ball $Z_R$. 
 The details, which we will omit, are elementary.
The proof of the lemma is complete.
\end{proof}

\begin{lem}\label{Lem-balls-inside-out}
There are dimension dependent constants $0<c_1<C_1$ such that
if $V_n$ is the set of visited sites of $\mathrm{BS}(n\delta_0, n^{1/d})$, with $n>1$ large,
then
 $$
 Z_{c_1 n^{1/d}   } \subset V_n \subset Z_{C_1 n^{1/d}   }.
 $$
\end{lem}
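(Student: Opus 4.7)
The plan is to compare the odometer of $\mathrm{BS}(n\delta_0, n^{1/d})$ with the discrete Green's function $G_R$ of the lattice ball $Z_R$. I obtain the outer inclusion directly from the minimality principle (Theorem \ref{Thm-canonical}), and the inner inclusion by contradiction, using Lemma \ref{Lem-no-touch}.

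For the outer bound, set $R = C_1 n^{1/d}$ with $C_1$ large, and check that $(Z_R, nG_R)$ is a stabilizing pair: the conditions $nG_R \geq 0$, $nG_R = 0$ on $\partial Z_R$, and $\Delta(nG_R) = -n\delta_0$ on $\INT{Z_R}$ are immediate from \eqref{Green-def}, while the capacity bound on $\partial Z_R$ follows from the upper estimate of Lemma \ref{Lem-Greens-Laplace}, which yields $n\Delta G_R < C_{\mathrm{up}} C_1^{1-d} n^{1/d}$ on $\partial Z_R$ (here $C_{\mathrm{up}}$ denotes the upper constant in that lemma). Choosing $C_1 \geq C_{\mathrm{up}}^{1/(d-1)}$ makes this $\leq n^{1/d} = \CAP$. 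Theorem \ref{Thm-canonical} then yields $V_n \subset Z_{C_1 n^{1/d}}$.

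For the inner bound, let $R^*$ be the largest $R$ with $Z_R \subset V_n$; by Lemma \ref{Lem-mono-growth}(b), $R^* \geq R_0$ for all $n$ sufficiently large (where $R_0$ is the threshold of Lemma \ref{Lem-Greens-Laplace}). Assume toward contradiction that $R^* < c_1 n^{1/d}$ and apply Lemma \ref{Lem-no-touch} with $B = Z_{R^*}$ and $v = nG_{R^*}$. The lower estimate in Lemma \ref{Lem-Greens-Laplace} gives
$$
\Delta v \;=\; n\Delta G_{R^*} \;>\; c_{\mathrm{lo}}\, n\, (R^*)^{1-d} \;>\; c_{\mathrm{lo}}\, c_1^{1-d}\, n^{1/d} \quad \text{on } \partial Z_{R^*},
$$
so picking $c_1 < c_{\mathrm{lo}}^{1/(d-1)}$ forces $\Delta v > \CAP$, activating the lemma. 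Its conclusion $\partial Z_{R^*} \cap \partial V_n = \emptyset$, combined with $\partial Z_{R^*} \subset V_n$, means that every $\xi \in \partial Z_{R^*}$ lies in $\INT{V_n}$, and hence all its lattice neighbors belong to $V_n$.

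To close the argument, let $R' = \min\{|\xi| : \xi \in \Z^d, \ |\xi| > R^*\}$ be the next lattice norm beyond $R^*$. Unpacking \eqref{def-Z-R}: every $\xi$ with $|\xi| = R'$ has an inward lattice neighbor of strictly smaller norm, which by minimality of $R'$ must have norm $\leq R^*$, so $\xi \in Z_{R^*}$; any new point $y \in Z_{R'} \setminus Z_{R^*}$ is a lattice neighbor of some such $\xi$, and this $\xi$ lies in $\partial Z_{R^*}$ (since it has $y \notin Z_{R^*}$ as a neighbor). Hence $y \in V_n$ by the previous paragraph, giving $Z_{R'} \subset V_n$ with $R' > R^*$, contradicting the maximality of $R^*$. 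The Green's function computations are immediate consequences of Lemma \ref{Lem-Greens-Laplace}; the main obstacle will be this last combinatorial step, which requires a careful tracking of which lattice points are added to $Z_R$ as $R$ crosses a discrete norm, so as to identify them all as neighbors of $\partial Z_{R^*}$.
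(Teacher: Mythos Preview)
Your proof is correct. For the inner inclusion you and the paper do essentially the same thing: both use Lemma~\ref{Lem-no-touch} with $v=nG_R$ to show that as long as $c_{\mathrm{lo}}\,n R^{1-d}>n^{1/d}$ the ball $Z_R$ cannot touch $\partial V_n$, and then push $R$ up one lattice norm at a time. The paper phrases this as a forward induction on a chain $R_0<R_1<\dots$ of radii; you package the same iteration as a contradiction on the maximal admissible radius $R^*$. One small wrinkle: since $R\mapsto Z_R$ is piecewise constant and right-continuous, ``the largest $R$ with $Z_R\subset V_n$'' may fail to exist as stated; taking $R^*$ to be the largest \emph{lattice norm} with $Z_{R^*}\subset V_n$ (equivalently, working with $R^*$ just below the supremum) removes the ambiguity and your combinatorial step then goes through exactly as written.

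For the outer inclusion your route is genuinely different and cleaner. The paper sketches a dual shrinking-ball argument (``start with a sufficiently large ball containing $V_n$, then shrink by one lattice step using the upper bound of Lemma~\ref{Lem-Greens-Laplace}''), which implicitly still passes through minimality but is left somewhat informal. You instead verify directly that $(Z_{C_1 n^{1/d}},\,nG_{C_1 n^{1/d}})$ is a stabilizing pair in the sense of Definition~\ref{Def-stab-pair} and invoke Theorem~\ref{Thm-canonical} once. This buys you a one-line argument with no iteration and no need for an a priori bounding ball; the only cost is that you must check the boundary condition $\mu_0+\Delta u\le\kappa_0$ of Definition~\ref{Def-stab-pair}, which is immediate since $0\notin\partial Z_R$.
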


\begin{proof}
By Lemma \ref{Lem-mono-growth} part (b) every site of $\Z^d$ is eventually visited by the sandpile if we keep increasing $n$.
Thus we will assume that $n>1$ is so large that the discrete ball $Z_{2R_0}$ is inside $V_n$,
where $R_0$ is fixed from Lemma \ref{Lem-Greens-Laplace}.
Let us also remark, that if for some $n>1$ we have the inclusion $Z_{2R_0} \subset V_n$,
then for all $n'>n$ we get $Z_{2R_0} \subset V_{n'}$ in view of Lemma \ref{Lem-mono-growth} part (a).

Let $G_R$ be as above (see \eqref{Green-def}). Then, the function $G_{R,n} := n G_R$
satisfies
$$
\Delta G_{R,n} = - n\delta_0 \text{ in } \INT{Z_R} \text{ and } G_{n,R} =0 \text{ on } \partial Z_R.
$$
From Lemma \ref{Lem-Greens-Laplace} we have $\Delta G_{R,n} \geq c_1  n R^{1-d}$ everywhere on $\partial Z_R$.
Observe that if $Z_R\subset V_n$ and $n$ and $R$ are chosen so that $\Delta G_{R,n} > n^{1/d}$ on $\partial Z_R$,
then  by Lemma \ref{Lem-no-touch} we have $\partial Z_R \cap \partial V_n =\emptyset$,
i.e. $Z_R $ stays strictly inside $V_n$.
We now choose a sequence of radii
$R_0<R_1<...<R_k$,
where $R_0$ is as above, and for each $1\leq i \leq k$ we have
$Z_{R_i} \subset Z_{R_{i+1}}$ and $Z_{R_{i+1}} \setminus Z_{R_i} \subset \partial Z_{R_{i+1}}$,
where the latter simply means that we enlarge the balls by 1 lattice step at most.
Given this, as long as we have $c_1 n R_i^{1-d} > n^{1/d}$ we get $Z_{R_i}\subset V_n$.
We can therefore increase the radius up to $c_1^{1/(d-1)} n^{1/d}$
obtaining by so the first inclusion of the current lemma.

The upper bound can be obtained similarly,
by starting with a ball of sufficiently large radius, containing $V_n$,
then shrinking the ball by at most 1 lattice step at a time and using the upper bound of
Lemma \ref{Lem-Greens-Laplace} instead.
The proof of the lemma is complete.  
\end{proof}

Our next result gives a uniform lower bound on the odometer function,
which will be needed in the analysis of the shape of scaling limit.
For the proof we need a discrete version of Harnack's inequality.

\begin{theorem}\label{Thm-Harnack}{\normalfont{(Harnack inequality, see \cite[Theorem 1.7.2]{Lawler-book-walks})}}
For any $\alpha<1$ there exists a constant $C_\alpha$ such that
if $f:Z_R \to [0,\infty)$ is harmonic in $\INT{Z_R}$
then $f(x_1) \leq C_\alpha f(x_2)$ for any $|x_1|, |x_2| \leq \alpha R$.
\end{theorem}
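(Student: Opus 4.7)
The plan is to follow the classical probabilistic route, since the statement is a purely random-walk estimate and the continuum version of the proof (via the Poisson kernel/Harnack chains) has a clean discrete analogue. Throughout, let $\{S_k\}_{k\geq 0}$ denote simple random walk on $\Z^d$ and $\tau_R$ the first exit time from $\INT{Z_R}$. Since $f$ is non-negative and harmonic in $\INT{Z_R}$, optional stopping gives the representation
$$
f(x)=\sum_{z\in \partial Z_R} H_R(x,z)\,f(z), \qquad H_R(x,z):=\Pp^x\{S_{\tau_R}=z\},
$$
for every $x\in\INT{Z_R}$. So it is enough to establish a two-sided estimate
\begin{equation}\label{eq:twosided}
c_\alpha\, H_R(x_2,z)\;\leq\; H_R(x_1,z)\;\leq\; C_\alpha\, H_R(x_2,z)
\end{equation}
uniformly in $z\in\partial Z_R$ and in $x_1,x_2$ with $|x_1|,|x_2|\leq \alpha R$; summing over $z$ against $f(z)\geq 0$ then yields the claim with constant $C_\alpha$.

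To prove \eqref{eq:twosided}, I would use the last-exit (reversibility) identity for simple random walk,
$$
H_R(x,z)\;=\;\frac{1}{2d}\sum_{\substack{y\sim z\\ y\in \INT{Z_R}}} G_R(x,y),
$$
which reduces the problem to comparing the Green's function $G_R(\cdot,y)$ of $\INT{Z_R}$ at two interior points, for $y$ one step inside the boundary. From the representation $G_R(x,y)=g(x,y)-\Ex^x[g(S_{\tau_R},y)]$ and the asymptotics \eqref{g-x-y}, one sees that for $|x|\le\alpha R$ and $|y|\ge(1-\tfrac{1}{R})R$,
$$
G_R(x,y)\;=\;c_d\,\widetilde G_R(x,y)+O(R^{-d}),
$$
where $\widetilde G_R$ is given by the difference of fundamental-solution terms exactly as in \eqref{g2}. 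Both the leading term and the error depend on $x$ only through factors that are uniformly comparable for $|x|\le\alpha R$ (because $\alpha<1$ keeps us away from the singularity at $y$, and the harmonic-measure correction is a convex average of $g(\cdot,y)$ along $\partial Z_R$). This yields $G_R(x_1,y)\asymp G_R(x_2,y)$ with constants depending only on $\alpha$ and $d$, and summing over the finitely many $y\sim z$ inside $\INT{Z_R}$ delivers \eqref{eq:twosided}.

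As a more hands-on alternative, which avoids sharp Green-function asymptotics, one can proceed by a Harnack-chain argument: from the mean-value identity one has the trivial local bound $f(y)\leq 2d\, f(x)$ whenever $x\sim y$, and by connecting $x_1$ and $x_2$ with a chain of $N(\alpha,d)$ neighbouring lattice points staying inside $Z_{\alpha' R}$ for some $\alpha<\alpha'<1$, one iterates this bound. The length of the chain must be controlled independently of $R$; this is done by first proving the result for a fixed reference ball (say $|x|\le 1/2$ in $Z_1$) by compactness / finite combinatorics, and then scaling. Because $Z_R$ is defined to be the closed discrete ball together with its one-neighbourhood \eqref{def-Z-R}, and because $\alpha<1$ gives us definite room between $\{|x|\le\alpha R\}$ and $\partial Z_R$, a uniform chain of bounded cardinality exists.

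The main obstacle, in either approach, is making the comparison uniform in $R$: the chain argument gives constants that a priori blow up as the chain length grows, and the Green-function approach requires that the error terms in \eqref{g-x-y} do not swamp the leading-order term near the boundary. In the probabilistic route the latter is handled precisely by the gap $(1-\alpha)R$ between the interior points and $\partial Z_R$, which bounds $|x_1-y|$ and $|x_2-y|$ from below by a definite multiple of $R$ and makes the ratio of fundamental-solution differences at $x_1$ and $x_2$ comparable up to a constant depending only on $\alpha$ and $d$. With this, \eqref{eq:twosided} follows and the theorem is established.
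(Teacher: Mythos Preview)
The paper does not give its own proof of this statement; it simply quotes it from Lawler's book \cite[Theorem 1.7.2]{Lawler-book-walks}. Your first approach --- represent $f$ via the exit distribution $H_R(x,z)$ and reduce the inequality to a two-sided comparison of $H_R(x_1,z)$ and $H_R(x_2,z)$ through the Green's function and potential-kernel asymptotics --- is essentially Lawler's route, so in that sense you are reproducing the cited source. The strategy is sound, though your justification that ``the leading term and the error depend on $x$ only through factors that are uniformly comparable'' is too quick: for $y$ one step inside $\partial Z_R$ the expectation $\mathbb{E}^x[g(S_{\tau_R},y)]$ involves boundary points $z$ with $|z-y|=O(1)$, where the asymptotics \eqref{g-x-y} does not give a uniform error. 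Lawler's actual argument uses the \emph{derivative} (difference) estimates on the potential kernel, i.e.\ bounds on $g(x_1,y)-g(x_2,y)$, rather than separate asymptotics for each term; with that correction the argument goes through.

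Your ``hands-on alternative'' via nearest-neighbour chains has a genuine gap. The one-step bound $f(y)\le 2d\,f(x)$ for $y\sim x$ is correct, but iterating it along a lattice path from $x_1$ to $x_2$ costs $(2d)^N$ with $N\asymp |x_1-x_2|\asymp R$, which blows up. Your proposed fix --- prove the inequality on $Z_1$ ``by compactness / finite combinatorics, and then scaling'' --- does not work in the discrete setting: there is no rescaling that takes a $\Delta$-harmonic function on $\INT{Z_R}$ to a $\Delta$-harmonic function on $\INT{Z_1}$, so the continuum scaling trick is unavailable. A genuine chain argument in $\Z^d$ requires first proving a scale-invariant local Harnack inequality (e.g.\ $\max_{Z_{R/2}} f \le C \min_{Z_{R/2}} f$ with $C$ independent of $R$), and that is precisely the content of the theorem --- one cannot bootstrap from the trivial $2d$ bound alone.
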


\begin{lem}\label{Lem-bounds-below}{\normalfont{(Bounds from below)}}
For any $r_0>0$ small there exists a constant $c_0>0$ depending on $r_0$ and dimension $d$ such that
for any $n>1$ and each $x^{(0)} \in V_n$ satisfying $\mathrm{dist}(x^{(0)}, \partial V_n) \geq r_0 n^{1/d}$
one has
$$
u_n(x^{(0)}) \geq c_0 n^{2/d},
$$
where $(V_n, u_n)$ is the stabilizing pair of $\mathrm{BS}(n\delta_0, n^{1/d})$.
\end{lem}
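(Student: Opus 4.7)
The plan is to combine a Green's function comparison near the origin with a discrete Harnack iteration to propagate the lower bound to all points well inside $V_n$.

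For the first step, set $R := c_1 n^{1/d}$ from Lemma \ref{Lem-balls-inside-out}, so that $Z_R \subset V_n$. Since the stabilized mass $\sigma_n := n \delta_0 + \Delta u_n$ is supported on $\partial V_n$, which is disjoint from $\INT{Z_R}$, the odometer satisfies $\Delta u_n = -n \delta_0$ throughout $\INT{Z_R}$, the same equation as $n G_R$. Because $u_n \geq 0 = n G_R$ on $\partial Z_R$, DMP applied to the discrete harmonic difference $u_n - n G_R$ gives $u_n \geq n G_R$ throughout $Z_R$. The estimate $G_R(x) \geq c\, R^{2-d}$ for $d \geq 3$ and $G_R(x) \geq c$ for $d = 2$ whenever $|x| \leq R/2$, obtained via the random-walk representation \eqref{G-R} together with the asymptotics \eqref{g-x-y} exactly as in the proof of Lemma \ref{Lem-Greens-Laplace}, then yields $u_n(x^{(0)}) \geq c\, n^{2/d}$ for every $x^{(0)}$ with $|x^{(0)}| \leq c_1 n^{1/d}/2$.

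For $|x^{(0)}| > c_1 n^{1/d}/2$, observe that because $\sigma_n$ vanishes in the interior of $V_n$, the odometer $u_n$ is non-negative and discrete harmonic on $\INT{V_n} \setminus \{0\}$. Setting $\rho := \tfrac{1}{8}\min(r_0, c_1)\, n^{1/d}$, I would build a Harnack chain $y_0 = x^{(0)}, y_1, \ldots, y_N = y_\ast$ with $|y_{i+1} - y_i| \leq \rho$ and $y_\ast \in \partial Z_{R/2}$, chosen so that each enlarged ball $Z_{2\rho}(y_i)$ is contained in $\INT{V_n} \setminus \{0\}$. Applying Theorem \ref{Thm-Harnack} on each such ball yields $u_n(y_i) \geq C_\alpha^{-1}\, u_n(y_{i+1})$, and since $\mathrm{diam}(V_n) \leq 2 C_1 n^{1/d}$ while each step covers distance of order $n^{1/d}$, the chain length $N$ is bounded by a constant depending only on $r_0$ and $d$. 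Iterating and combining with the first step gives $u_n(x^{(0)}) \geq C_\alpha^{-N} c\, n^{2/d} \geq c_0\, n^{2/d}$.

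The main obstacle is the construction of this Harnack chain: each $2\rho$-ball around a $y_i$ must remain inside $V_n$ at distance $\geq 2\rho$ from $\partial V_n$ while simultaneously avoiding a $2\rho$-neighborhood of the origin (the singularity of $u_n$). The bulk inclusion $Z_{c_1 n^{1/d}} \subset V_n$ provides ample room near the origin, and the hypothesis $\mathrm{dist}(x^{(0)}, \partial V_n) \geq r_0 n^{1/d}$ provides room around the starting point. A natural choice is to discretize the straight segment from $x^{(0)}$ toward the nearest point of $\partial Z_{R/2}$, using the lattice symmetries of $V_n$ from Corollary \ref{cor-O-symm} to perturb the path off any part of $\partial V_n$ it might graze; verifying that this can be done with balls of the prescribed radius is the technical heart of the argument.
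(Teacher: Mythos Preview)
Your first step (comparison with $nG_R$ on $Z_{c_1 n^{1/d}}$ via DMP) is correct and matches the paper exactly. The gap is in the second step. You correctly identify that the Harnack chain must stay at distance $\gtrsim n^{1/d}$ from $\partial V_n$ along its entire length, but you only know this at the endpoint $x^{(0)}$ (by hypothesis) and inside the core ball $Z_{c_1 n^{1/d}}$ (by Lemma~\ref{Lem-balls-inside-out}). Nothing rules out that $V_n$ has an indentation between these two regions through which your straight segment would pass too close to $\partial V_n$; the symmetries of Corollary~\ref{cor-O-symm} do not exclude this, and ``perturbing the path'' does not help if the passage itself is narrow.

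The paper closes this gap with the directional monotonicity of Theorem~\ref{Thm-monotonicity}, which you do not invoke. If $x^{(0)}=(a_1,\dots,a_d)$ and $|a_d|=\max_i |a_i|$, then monotonicity in the $e_d$-direction (together with the axial symmetry of $u_n$) forces the full cylinder $\Pi_d \times [-a_d,a_d]$ over the projection $\Pi_d$ of the ball $B_{\beta_0 n^{1/d}}(x^{(0)})$ to lie inside $V_n$. This gives a fat tube along which Harnack can be iterated down to the hyperplane $\{x_d=0\}$, and the argument is then repeated on the remaining coordinates, terminating in at most $d-1$ steps at a point inside $Z_{\alpha_0 n^{1/d}}$. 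So the missing idea is not a perturbation but a structural input: the odometer's monotonicity is what guarantees the Harnack chain has room to exist.
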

\begin{proof}
We first observe that the bound of the lemma holds near the origin,
i.e. there exist dimension dependent constants $\alpha_0>0$ and $c_0$ such that
for any $n>1$ one has 
\begin{equation}\label{u-n-large-near-0}
u_n(x) \geq c_0 n^{2/d}, \qquad  x\in Z_{\alpha_0 n^{1/d}} .
\end{equation}
Indeed, by Lemma \ref{Lem-balls-inside-out} and DMP we have $u_n (x) \geq n G_{c_1 n^{1/d} } (x)$
for some constant $c_1=c_1(d)$, where the Green's function $G$ is defined in (\ref{Green-def}).
Thanks to this bound, \eqref{u-n-large-near-0} follows directly from asymptotics of Green's function proved in \cite[Proposition 1.5.9]{Lawler-book-walks} for $d\geq 3$,
and \cite[Proposition 1.6.7]{Lawler-book-walks} for $d=2$.
Next, we reduce the general case of the lemma to
\eqref{u-n-large-near-0} by constructing a Harnack chain leading from a given point to the ball $Z_{r_0 n^{1/d}}$.

Let $x^{(0)}= (a_1,a_2,...,a_d)\in \Z^d$ and assume $x^{(0)} \notin Z_{\alpha_0 n^{1/d}}$, since otherwise
the estimate follows by \eqref{u-n-large-near-0}.
Suppose $|a_d| = \max_{1\leq i \leq d} |a_i|$, and  
denote $\beta_0 : = \min\{ \alpha_0/4, r_0/4 \}$, where $\alpha_0$ is fixed from (\ref{u-n-large-near-0}) and $r_0$ from the formulation of the lemma.
We get that the ball
$$
Z_0: = \{x\in \Z^d:  \ |x-x^{(0)}|\leq \beta_0 n^{1/d} \} 
$$
lies inside $V_n \setminus \{0\}$.
By $\Pi_d$ denote the orthogonal projection of $Z_0$ onto $\Z^{d-1}\times \{0\}$, i.e.
$$
\Pi_d  := \{ (x_1,...,x_{d-1},0)\in \Z^d : \ \exists  a\in \Z \text{ s.t. } (x_1,...,x_{d-1},a)\in Z_0 \}.
$$

By directional monotonicity of Theorem \ref{Thm-monotonicity} applied to the direction $e_d$, and axial symmetry of the odometer given by Corollary \ref{cor-O-symm}, 
we have 
\begin{equation}\label{cylinder}
\mathcal{C}_d :=\big( \Pi_d \times [-a_d,a_d]  \big) \cap \Z^d \subset V_n.
\end{equation}
If the cylinder $\mathcal{C}_d $ intersects the ball $Z_{\alpha_0 n^{1/d}}$, the estimate of the lemma follows
by Harnack's inequality of Theorem \ref{Thm-Harnack} and upper estimate of Lemma \ref{Lem-balls-inside-out}, where the latter is used to estimate
the length of Harnack chain. 
Otherwise, if $\mathcal{C}_d \cap Z_{\alpha_0 n^{1/d}} =\emptyset$, denote $x^{(1)} = (a_1,...,a_{d-1}, 0)$,
and $Z_1 = \{x\in \Z^d:  \ |x-x^{(1)}|\leq \beta_0 n^{1/d} \} $.
By construction $Z_1\subset \mathcal{C}_d\subset V_n$.
We have, due to Harnack's inequality, that $u_n(x^{(1)}) \geq C u_n(x^{(0)})$ with a constant $C=C(d, r_0)$,
hence it is enough to prove the lemma for $x^{(0)}$ replaced by $x^{(1)}$.
For that, we apply the same argument as we had for $x^{(0)}$ to $x^{(1)}$.
Since $x^{(1)}$ has at most $d-1$ non-zero coordinates, this reduction procedure on coordinates will terminate in at most $d-1$ number of steps,
where in the last step, the corresponding cylinder (\ref{cylinder}) will intersect the ball $Z_{\alpha_0 n^{1/d} }$
implying the desired estimate of the lemma.
The proof is complete. 
\end{proof}

\section{Shape analysis}\label{sec-Shape-analysis}

The purpose of this section is the study of scaling limit of the model
when the initial distribution is concentrated at a point.
We will show that after proper scaling of mass and the underlying lattice,
there is a convergence along subsequences. The approach is via uniform gradient bounds
for the odometers.
Combined with some results of the previous section,
we will prove a certain non-degeneracy result of the converging shapes, as well as Lipschitz regularity of the boundary of scaling limit.

\vspace{0.2cm}
Throughout  this section,
we assume that initial distribution of the sandpile equals $n\delta_0$, where $n>1$ and boundary capacity
of the model is $ n^{1/d}$. For all $n>1$, by $u_n$ we denote the corresponding odometer function and by $V_n$ the set of visited sites.

\subsection{Gradient bounds}
First, we establish uniform Lipschitz bounds on the odometer functions away from the origin.

\begin{lem}\label{Lem-grad-outside-0}
Let $r_0>0$ be any fixed number.
Then, there exists a constant $C=C(r_0)$ such that for any $n>1$ one has
$$
|u_n(x) - u_n(y)| \leq C n^{1/d},
$$
where $x,y\in V_n$ satisfy $ r_0 n^{1/d} \leq |x| \leq 2r_0 n^{1/d}$ and $x\sim y$.
\end{lem}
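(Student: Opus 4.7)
The proof hinges on the identity $u_n(x) = n G_{V_n}(x, 0)$ for $x \in V_n$, which follows by uniqueness of the Dirichlet problem since both sides solve $\Delta w = -n\delta_0$ in $\INT{V_n}$ with $w = 0$ on $\partial V_n$ (noting that any site on $\partial V_n$ has never toppled, so $u_n$ vanishes there). Following the route of Lemma \ref{Lem-Greens-Laplace}, I would use the random-walk representation $G_{V_n}(x, 0) = g(x, 0) - \mathbb{E}^x[g(S_{\tau_{V_n}}, 0)]$, with $g$ the fundamental solution of $\Delta$ on $\Z^d$ and $\tau_{V_n}$ the first exit time from $\INT{V_n}$, to decompose
\[
u_n(x) = n g(x, 0) - h(x), \qquad h(x) := n \mathbb{E}^x[g(S_{\tau_{V_n}}, 0)].
\]
The plan is to estimate the gradients of the singular piece $n g(\cdot, 0)$ and the discretely harmonic piece $h$ separately.

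For the singular piece, the asymptotic expansion \eqref{g-x-y} gives by direct computation $|g(x, 0) - g(y, 0)| \leq C|x|^{1-d}$ for $y \sim x$ with $|x| \geq 1$. Inserting $|x| \geq r_0 n^{1/d}$ yields
\[
n|g(x, 0) - g(y, 0)| \leq C r_0^{1-d} n^{1/d},
\]
already of the desired order.

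For $h$, I would first bound its oscillation. Since $h = n g(\cdot, 0)$ on $\partial V_n$ and by Lemma \ref{Lem-balls-inside-out} the boundary $\partial V_n$ sits in the annulus $c_1 n^{1/d} \leq |z| \leq C_1 n^{1/d}$, the expansion \eqref{g-x-y} gives $\mathrm{osc}_{\partial V_n} h \leq C n^{2/d}$ in both $d = 2$ (where the logarithm has bounded oscillation over a dyadic annulus) and $d \geq 3$. The discrete maximum principle then propagates $\mathrm{osc}_{V_n} h \leq C n^{2/d}$. For $r_0$ small enough that the annulus $\{r_0 n^{1/d} \leq |x| \leq 2 r_0 n^{1/d}\}$ sits strictly inside $V_n$ with clearance comparable to $n^{1/d}$ from both $\partial V_n$ and the origin (concretely, $2 r_0 \leq c_1/2$ suffices by Lemma \ref{Lem-balls-inside-out}), $h$ is discretely harmonic on a ball $B_\rho(x) \subset \INT{V_n} \setminus \{0\}$ with $\rho \sim n^{1/d}$. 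The standard discrete gradient estimate for harmonic functions in terms of oscillation then yields
\[
|h(x) - h(y)| \leq \frac{C}{\rho}\, \mathrm{osc}_{B_\rho(x)} h \leq C(r_0) n^{1/d},
\]
which combined with the singular-part bound concludes the argument.

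The main technical obstacle is securing the clearance of order $n^{1/d}$ from $\partial V_n$ needed to apply the harmonic gradient estimate at the correct scale. For small $r_0$ this is immediate from Lemma \ref{Lem-balls-inside-out}; when $r_0$ is larger, $x$ may approach $\partial V_n$ at scale $o(n^{1/d})$ and the estimate for $h$ degenerates. A supplementary argument would be needed in that regime --- for instance, observing that every neighbor of a point $z \in \partial V_n$ satisfies $u_n \leq 2d\, n^{1/d}$ (since $u_n(z) = 0$ and $\Delta u_n(z) \leq n^{1/d}$), and then propagating this boundary-layer control inward using the directional monotonicity of Theorem \ref{Thm-monotonicity} together with the symmetries of Corollary \ref{cor-O-symm}.
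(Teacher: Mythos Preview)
Your argument is correct and shares the paper's skeleton: write $u_n = n\,G_{V_n}(\cdot,0)$, split via the random-walk representation into the singular piece $n\,g(\cdot,0)$ and a discretely harmonic correction, and bound each separately. The paper treats the harmonic correction more directly, however. Rather than bounding its oscillation over $V_n$ and then invoking an interior gradient estimate for discrete harmonic functions, it writes
\[
|G_n(0,x)-G_n(0,y)| \le |g(0,x)-g(0,y)| + \max_{w\in\partial V_n}|g(w,x)-g(w,y)|
\]
and estimates the second term pointwise by $C|x-y|\,|w-x|^{1-d} + Cn^{-1} \le C n^{-1+1/d}$ via the mean-value theorem applied to the asymptotics~\eqref{g-x-y}. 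This avoids the detour through the discrete harmonic gradient estimate, but both routes rest on the same clearance $|x-w|\gtrsim n^{1/d}$ from $\partial V_n$ and hence both implicitly need $r_0$ small; the paper simply asserts this clearance from Lemma~\ref{Lem-balls-inside-out} without further comment. Your final paragraph diagnosing the large-$r_0$ regime and sketching a boundary-layer fix is a welcome addition that the paper omits---though in the only application (Proposition~\ref{Lem-Lipschitz-est}) the lemma feeds the inner boundary of an annulus, where the clearance holds automatically once $r_0$ is taken below the constant $c_1$ of Lemma~\ref{Lem-balls-inside-out}.
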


\begin{proof}
Let $G_n(x,y)$ be the Green's function for $V_n$,
i.e. $\Delta_x G_n(x,y) = -\delta_0(y-x)$ for any $x,y\in \INT{V_n}$
and $G(x,y)=0$ if $x\in \partial V_n$ or $y\in \partial V_n$.
From \cite[Proposition 4.6.2]{Lawler-book} for $x,y\in \INT{V_n}$ we have
$$
G_n (x,y) = \mathbb{E}^x[ g(S_\tau, y)  ] - g(x,y), 
$$
where as before $g$ is defined from (\ref{g-x-y}), and $\tau$ is the first exit time from $\INT{V_n}$ of
the simple random walk $\{S_k\}_{k=0}^\infty$ started at $x$.
From the definition of $G_n $ we have
\begin{multline}\label{a1}
| G_n(0,x) - G_n(0,y) | \leq |g(0,x) - g(0,y)| + \mathbb{E}^x | g(S_\tau, x) - g(S_\tau, y)  | \leq \\
C \big( |x|^{1-d} + n^{-1} + \max\limits_{w\in \partial V_n} |g(w, x) - g(w, y)| \big) ,
\end{multline}
where $C$ is a dimension dependent constant, and
we have used the asymptotics (\ref{g-x-y}) and Mean-Value Theorem along with the choice of $x$ and $y$ to
bound the first summand.
From Lemma \ref{Lem-balls-inside-out} and the choice of $x,y$ we get that
$|x-w|\geq c_d n^{1/d}$ for any $w\in \partial V_n$.
This, coupled with (\ref{g-x-y}) and Mean-Value Theorem, for any $w\in \partial V_n$ implies
$$
|g(w, x) - g(w, y)| \leq C \frac{|x-y|}{|w-x|^{d-1}}  +  C n^{-1} \leq C n^{-1 + 1/d}.
$$
Combining this estimate with (\ref{a1})
we obtain
\begin{equation}\label{a2}
| G_n(0,x) - G_n(0,y) | \leq C n^{-1+1/d}.
\end{equation}
Since $G_n(x,y) = G_n(y,x)$ we get from the definition of $u_n$ that
$u_n (x) = n G_n(0,x)$ for all $x\in V_n$, which together with (\ref{a2})
completes the proof of the lemma.
\end{proof}

\begin{prop}\label{Lem-Lipschitz-est}{\normalfont{(Uniform Lipschitz bound)}}
For any $r_0>0$ there exists a constant $C=C(r_0)$ such that
for any $n>1$ and all $x,y \in \Z^d$ satisfying $|x|, |y| >r_0 n^{1/d}$ one has
$$
| u_n(x) - u_n(y) | \leq C n^{1/d}|x-y|.
$$
\end{prop}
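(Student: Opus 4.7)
The strategy is to upgrade Lemma \ref{Lem-grad-outside-0}, which handles only neighboring pairs in the thin annulus $r_0 n^{1/d}\le|x|\le 2r_0 n^{1/d}$, to a uniform per-edge bound on the whole region $\{|z|\ge r_0 n^{1/d}\}$, and then to chain this bound along a short lattice path from $x$ to $y$.

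\textit{Step 1 (per-edge estimate).} The goal is to show that for every $r_0>0$ there is $C=C(r_0)$ such that
\begin{equation*}
|u_n(x)-u_n(y)|\le C n^{1/d}\qquad\text{whenever }x\sim y\text{ and }\min(|x|,|y|)\ge r_0 n^{1/d}.
\end{equation*}
For $|x|\le c_1 n^{1/d}/2$ (with $c_1$ the inner-ball constant of Lemma \ref{Lem-balls-inside-out}), successive applications of Lemma \ref{Lem-grad-outside-0} at the dyadic scales $r_0,2r_0,4r_0,\dots$ cover the region and produce the bound, since in each shell $x$ stays at distance $\gtrsim n^{1/d}$ from $\partial V_n$, which is what drives the Green's-function argument of Lemma \ref{Lem-grad-outside-0}. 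For $|x|\ge c_1 n^{1/d}/2$ (the boundary layer, where that Green's-function argument fails), use that $u_n$ is discrete-harmonic and nonnegative on $\INT{V_n}\setminus\{0\}$, vanishes on $\partial V_n$, and satisfies the discrete Neumann-type bound $\frac{1}{2d}\sum_{y\sim x} u_n(y)\le n^{1/d}$ for every $x\in\partial V_n$ coming from stability and $u_n(x)=0$. A standard boundary gradient estimate for discrete harmonic functions then yields the same bound $|u_n(x)-u_n(y)|\le C n^{1/d}$ in this layer as well.

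\textit{Step 2 (chaining).} Given the per-edge estimate, split the proof of the Lipschitz inequality into two regimes. If $|x-y|<(r_0/2)n^{1/d}$, then both $x$ and $y$ lie in $\{|z|\ge (r_0/2)n^{1/d}\}$, and there is an axis-parallel lattice path $x=z_0\sim z_1\sim\cdots\sim z_N=y$ of length $N\le d|x-y|$ that remains in $\{|z|\ge (r_0/3)n^{1/d}\}$. Applying Step 1 to each edge and summing yields $|u_n(x)-u_n(y)|\le C(r_0)n^{1/d}|x-y|$. If instead $|x-y|\ge (r_0/2)n^{1/d}$, one uses the crude uniform bound $\|u_n\|_\infty\le C n^{2/d}$, which follows from the identity $u_n(x)=nG_n(x,0)$, the inclusion $V_n\subset Z_{C_1 n^{1/d}}$ of Lemma \ref{Lem-balls-inside-out}, and the asymptotics \eqref{g-x-y} of the lattice Newtonian potential. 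This gives
\begin{equation*}
|u_n(x)-u_n(y)|\le 2\|u_n\|_\infty \le C n^{2/d}\le (2C/r_0)\,n^{1/d}|x-y|,
\end{equation*}
completing the proof in this regime.

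\textit{Main obstacle.} The delicate point is the boundary-layer portion of Step 1. The random-walk/Green's-function identity behind Lemma \ref{Lem-grad-outside-0} relies on the uniform separation $|x-w|\ge c_d n^{1/d}$ for all $w\in\partial V_n$, which breaks down as soon as $x$ comes within $O(1)$ lattice steps of $\partial V_n$. Bypassing this requires a discrete boundary-gradient estimate for $u_n$, exploiting the fact that the discrete normal flux of $u_n$ at $\partial V_n$ is bounded by the capacity $n^{1/d}$. Once this boundary-layer estimate is secured, everything else is a routine chaining argument.
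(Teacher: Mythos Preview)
Your proposal has the right skeleton (per-edge bound followed by chaining), but the step you yourself flag as the ``main obstacle'' is left as an appeal to a ``standard boundary gradient estimate for discrete harmonic functions.'' No such black-box result exists for domains as irregular as $V_n$: barrier or Hopf-type arguments require boundary regularity you do not have, and the Neumann bound only directly controls $u_n$ at lattice distance $1$ from $\partial V_n$, whereas your ``boundary layer'' $\{|x|\ge c_1 n^{1/d}/2\}$ has thickness of order $n^{1/d}$. Naively propagating the bound inward (using $\Delta u_n=0$) gives $u_n\le (2d)^k n^{1/d}$ at distance $k$, which blows up.

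The paper's resolution is cleaner than your outline and makes both the dyadic decomposition and the two-regime chaining unnecessary. The key observation is that the discrete derivative $\partial_i^+ u_n(x)=u_n(x+e_i)-u_n(x)$ is itself discrete-harmonic on the set $V_{n,0}=\{x\in V_n:\ |x|\ge r_0 n^{1/d},\ \mathrm{dist}(x,\partial V_n)\ge 2\}$. One then applies the discrete maximum principle directly to $\partial_i^+ u_n$. The required boundary data comes from precisely the two sources you list: Lemma~\ref{Lem-grad-outside-0} on the inner annulus $r_0 n^{1/d}\le|x|\le 2r_0 n^{1/d}$, and the stability bound near $\partial V_n$ (stability plus $u_n=0$ on $\partial V_n$ gives $u_n\le 2d\,n^{1/d}$ at distance $1$, hence $|\partial_i^+ u_n|\le C_d n^{1/d}$ there). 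The DMP is exactly the mechanism that pushes the per-edge bound across the whole region between these two pieces of boundary; this is the ingredient your write-up is missing. Once the per-edge bound holds on all of $\{|x|>r_0 n^{1/d}\}$, a single lattice path from $x$ to $y$ staying outside $B_{r_0 n^{1/d}}$, of length comparable to $|x-y|$, finishes the proof; the separate long-distance regime using $\|u_n\|_\infty$ is not needed.
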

\begin{proof}
Recall that $V_n\subset \Z^d$ is the set of visited sites for mass $n>1$.
Define 
$$
V_{n,0} = \{ x \in V_n: \ |x|\geq r_0 n^{1/d} \text{ and } \mathrm{dist}(x, \partial V_n) \geq 2  \},
$$
where $\mathrm{dist}$ is the combinatorial distance.
For each $1\leq i \leq d$ consider the discrete derivative of $u$, namely $\partial_i^+ u (x) := u(x+e_i) - u(x) $ with $x\in \Z^d$.
Clearly $\partial_i^+ u$ is harmonic in $V_{n,0}$.
Now, in view of the stability of the sandpile, for any $x\in V_n$ such that $\mathrm{dist}(x, \partial V_n) \leq 1$
we have $|\partial_i^+ u (x) | \leq C_1 n^{1/d}$ with a constant $C_1=C_1(d)$.
On the other hand, by Lemma \ref{Lem-grad-outside-0} we have $|\partial_i^+ u (x)| \leq C_{r_0} n^{1/d}$ if 
$r_0 n^{1/d} \leq  |x| \leq 2 r_0 n^{1/d}$.
Since $\Delta(\partial_i^+ u) =0$ in the interior of $V_{n,0}$, by DMP we get
$|\partial_i^+ u (x)| \leq C n^{1/d}$ in $V_{n,0}$.
The same bound obviously works for $\partial_i^- u(x)  := u(x-e_i) - u(x)$.
We have thus proved 1-step Lipschitz bound, i.e. the estimate of the proposition
if $x$ and $y$ are lattice neighbours.

For the general case, take any $x,y$ such that $|x|, |y| >r_0 n^{1/d}$, and consider the shortest lattice path
connecting $x$ and $y$ and staying outside the ball $B_{r_0 n^{1/d}}$.
Namely, let 
$$
x= X_0 \sim X_1 \sim ... \sim X_k =y,
$$
where $X_i \in \Z^d $ and $|X_i| >r_0 n^{1/d}$. Clearly such path exists.
It is also clear that for the length of the path we have 
$k\approx |x-y|$ where equivalence is with dimension dependent constants. 
Using this and the 1-step Lipschitz bound already proved above, we obtain 
$$
|u(x) - u(y) | \leq \sum\limits_{i=0}^{k-1} | u(X_{i+1} )  - u(X_i)  | \leq C_{r_0} n^{1/d} k \leq C_{r_0} n^{1/d} |x-y|,
$$
completing the proof of the proposition.
\end{proof}

\subsection{Scaled odometers}\label{subsec-scaled-od}
For $n\geq 1$ set $h= n^{-1/d} $, and define the scaled odometer by
$u_h (x) =h^2 u_n(h^{-1} x) $ where $x\in h \Z^d$.
Let also $V_h := h V_n \subset h\Z^d$ be the scaled set of visited sites for mass $n$.
Clearly, $ u_h$ is supported in the interior of $V_h$. Moreover, in view of Lemma \ref{Lem-balls-inside-out}
we have that the sets $\{V_h\}_{0<h\leq 1}$, are uniformly bounded and contain a ball of some fixed radius.
%The aim is to show convergence of $u_h $ as $h\to 0$, which amounts to existence of a scaling limit for the boundary sandpile process.
%We first observe that $(u_h)$ converges along subsequences.

We will need a few notation.
For $0<h \leq 1$ and $\xi =(\xi_1,...,\xi_d)\in h\Z^d$ define the half-open cube
\begin{equation}\label{cube-def}
\mathrm{C}_h(\xi) = \left[ \xi_1 - \frac{h}{2}, \xi_1 +\frac{h}{2} \right)\times ... \times \left[ \xi_d - \frac{h}{2}, \xi_d +\frac{h}{2} \right),
\end{equation}
and for a given set $V\subset h\Z^d$ define $V^{\square} = \bigcup\limits_{\xi\in V} \mathrm{C}_h(\xi)$.

In order to study the scaling limit of the model,
we need to extend each $u_h$ to a function defined on $\R^d$.
We will use a standard extension of $u_h$ which preserves its $\Delta^h$-Laplacian.
Namely, for fixed $0<h\leq 1$ define a function $u_h :\R^d \to \R_+$, where for each $\xi \in h \Z^d$ and any $x\in \mathrm{C}_h(\xi)$ we have set $u_h(x) = u_h(\xi)$. Clearly $\Delta^h u_h (x) = \Delta^h u_h(\xi)$
for all $x\in \mathrm{C}_h(\xi)$.
Note that we are using the same notation for extended odometers.
In what follows $u_h$ will stand for this extension.
Also, for a given set $E\subset \R^d$ we write $\INT{E}$
for the interior of $E$ in the next theorem.
The following is our main result concerning scaling limit of the model.

\begin{theorem}\label{Thm-scaling-limit}
There exists a sequence $h_k\to 0$, and compactly supported non-negative function $u_0\in C(\R^d\setminus \{0\})$
which is Lipschitz outside any neighbourhood of the origin, such that
\begin{itemize}
 \item[{\normalfont{(i)}}] $ u_{h_k} \to u_0 $ locally uniformly in $\R^d\setminus \{0\}$,
 \vspace{0.2cm}
 \item[{\normalfont{(ii)}}] $\Delta u_0 = -\delta_0$ in $\{u_0>0\}$
 in the sense of distributions, where $\delta_0$ is the Dirac delta at the origin
 and $\Delta$ denotes the continuous Laplace operator,
\item[{\normalfont{(iii)}}] if $V_0\subset \R^d$ is the support of $u_0$ then,
 $ V_0 = \bigcup\limits_{ m=1 }^\infty  \bigcap\limits_{k=m}^\infty V_{h_k}^\square, $
 \vspace{0.2cm}
\item[{\normalfont{(iv)}}] if we set $u_0(0)=+\infty$, and let the set of vectors
$\mathcal{N}$ be defined as in \eqref{dir-N},
then for any $x^{(1)}, x^{(2)} \in \R^d$ with the property that
 $x^{(2)}-x^{(1)}$ is non-zero and is collinear to any of the vectors of $\mathcal{N}$,
we have
$$
u_0(x^{(1)}) \geq u_0(x^{(2)}) \ \ \text{ if } \ \ |x^{(1)}| \leq |x^{(2)}|.
$$

 \vspace{0.2cm}
 \item[{\normalfont{(v)}}] the boundary of $V_0 $ is locally a Lipschitz graph.
\end{itemize} 
\end{theorem}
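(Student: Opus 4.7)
The plan is to extract $u_0$ by a compactness argument on the scaled odometers, push the discrete PDE and monotonicity to the limit, and deduce Lipschitz regularity of $\partial V_0$ from the limit monotonicity via a two-sided cone condition. For part (i), Proposition \ref{Lem-Lipschitz-est} in scaled form reads $|u_h(x) - u_h(y)| \le C(r_0)|x-y|$ for $x,y\in h\Z^d$ with $|x|,|y| \ge r_0$; combined with $V_h \subset \overline{B_{C_1}}$ from Lemma \ref{Lem-balls-inside-out}, the cube-wise extensions form a uniformly bounded, equi-Lipschitz family on each annulus $\{r_0 \le |x| \le 2 C_1\}$, and a diagonal Arzelà–Ascoli argument over $r_0 = 1/m$ produces the subsequence $h_k \to 0$ and the limit $u_0$, which is automatically continuous and locally Lipschitz on $\R^d\setminus\{0\}$.

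For (ii), $\Delta^{h_k} u_{h_k}$ vanishes throughout $\INT{V_{h_k}}\setminus\{0\}$ while $\Delta^{h_k}u_{h_k}(0) = -h_k^{-d}$; discrete integration by parts against $\phi \in C_c^\infty(\R^d)$ and passage to the limit (using local uniform convergence off the origin together with the standard $\Delta^{h_k}\phi \to \Delta \phi$) yields $\int u_0\, \Delta \phi = -\phi(0)$, which is (ii) on $\{u_0>0\}$. For (iii), Lemma \ref{Lem-bounds-below} in scaled form gives $u_{h_k} \ge c_0(r_0) > 0$ on $\{x\in V_{h_k} : \mathrm{dist}(x,\partial V_{h_k})\ge r_0\}$; together with local uniform convergence this identifies $\{u_0>0\}$ with the interior of $\liminf V_{h_k}^\square$ away from $0$, and taking closures gives the support equality. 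For (iv), approximate $X_i \in \R^d$ by $X_i^{(k)} \in h_k\Z^d$ chosen so that $X_2^{(k)} - X_1^{(k)}$ is an integer multiple of $h_k v$ (keeping the collinearity with $v\in \mathcal{N}$ exactly on the lattice) with $X_i^{(k)} \to X_i$; in the strict case $|X_1|<|X_2|$, the inequality $|X_1^{(k)}| \le |X_2^{(k)}|$ holds eventually, and Theorem \ref{Thm-monotonicity} applied to $u_{n_k}$ and rescaled by $h_k^2$ gives $u_{h_k}(X_1^{(k)}) \ge u_{h_k}(X_2^{(k)})$; the local uniform limit yields the claim, the equality case $|X_1|=|X_2|$ follows by continuity off the origin, and the case $X_1=0$ is trivial since $u_0(0)=+\infty$.

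The hard part will be (v). The idea is to convert (iv), together with the inherited lattice symmetry of $V_0$ from Corollary \ref{cor-O-symm}, into a uniform two-sided cone condition at each boundary point. Fix $x_0 \in \partial V_0 \setminus \{0\}$; by symmetry we may place $x_0$ in the closed fundamental chamber $F = \{x_1 \ge x_2 \ge \cdots \ge x_d \ge 0\}$, and set
\begin{equation*}
K_{x_0} := \mathrm{cone}\bigl(\{v \in \mathcal{N} : v\cdot x_0 > 0\}\bigr).
\end{equation*}
For $x_0$ in the interior of $F$ the generating set contains every $e_i$, every $e_i+e_j$, and every $e_i-e_j$ with $i<j$, so $K_{x_0}$ equals the closed positive orthant. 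For each such $v$, a direct computation gives $|x_0 - tv| < |x_0| < |x_0 + tv|$ for small $t>0$; combined with (iv) applied at points close to $x_0$ just inside or just outside $V_0$ (whose existence is guaranteed by $x_0\in\partial V_0$), this yields the local cone inclusions
\begin{equation*}
(x_0 - \INT{K_{x_0}}) \cap B_\rho(x_0) \subset \INT{V_0} \quad \text{and} \quad (x_0 + \INT{K_{x_0}}) \cap B_\rho(x_0) \subset \R^d \setminus V_0,
\end{equation*}
for some $\rho = \rho(x_0)>0$. A proper two-sided cone with a fixed opening forces $\partial V_0$ to be the graph of a Lipschitz function in a neighbourhood of $x_0$ over any hyperplane transverse to $K_{x_0}$, for example the one normal to $(1,\ldots,1)/\sqrt d$. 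Boundary points lying on reflection hyperplanes in $\partial F$ are brought back to interior ones via the $\mathcal{N}$-reflection symmetry of $V_0$, which preserves the Lipschitz graph property. The most delicate point will be ensuring that the opening of $K_{x_0}$ stays bounded away from zero uniformly up to those reflection hyperplanes, so that the Lipschitz constant does not blow up at "corner" points shared by several symmetry planes; this is what rules out the formation of a corner in $\partial V_0$ and delivers the local Lipschitz graph property.
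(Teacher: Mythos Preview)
Parts (i)--(iv) follow the paper's argument. Two technicalities you skip: the cube-wise extensions are piecewise constant and hence not Lipschitz on $\R^d$, so Arzel\`a--Ascoli does not apply to them directly; the paper introduces an auxiliary McShane extension $U_h^\rho$ of $u_h|_{E_\rho(h)}$ and controls $|u_h-U_h^\rho|\le C_\rho h$ before extracting the subsequence. Your direct integration-by-parts route to (ii) also needs to justify $\int u_{h_k}\Delta^{h_k}\phi\to\int u_0\,\Delta\phi$ across the origin, where $u_{h_k}$ blows up; the paper avoids this by noting that $u_{h_k}-\Phi_{h_k}$ is discrete-harmonic and uniformly bounded in a fixed ball (with $\Phi_{h_k}$ the discrete fundamental solution), so that the limit has a removable singularity at $0$.

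For (v) you have the right strategy but a genuine gap, and you have misidentified where the difficulty lies. The \emph{opening} of your cone $K_{x_0}=\mathrm{cone}\{v\in\mathcal{N}:v\cdot x_0>0\}$ is not the problem: there are only finitely many sign patterns, and one checks each yields a full-dimensional cone (incidentally, for $x_0\in\INT F$ this cone is strictly larger than the positive orthant, since it contains $e_i-e_j$ for $i<j$). The problem is the \emph{height} $\rho(x_0)$. Chaining the monotonicity of (iv) along $v_1,\dots,v_k$ requires every intermediate point to lie in $\bigcap_i\{x:v_i\cdot x>0\}$; with your maximal generating set this forces $\rho(x_0)\lesssim\min\{\,v\cdot x_0:v\in\mathcal{N},\ v\cdot x_0>0\}$, which tends to $0$ as $x_0$ approaches any face of $F$. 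A cone condition with degenerating height does not yield a Lipschitz graph. The paper's fix is to select, for each boundary point, a \emph{sub}family $\mathcal{V}\subset\mathcal{N}$ via the explicit overlapping rules $(\star)$ ($e_d\pm e_i\in\mathcal{V}$ when $x_d^{(0)}\ge 2x_i^{(0)}$) and $(\star\star)$ ($e_i\in\mathcal{V}$ when $x_d^{(0)}\le 3x_i^{(0)}$), so that simultaneously $\mathcal{V}$ spans $\R^d$ and $v\cdot x^{(0)}\ge c\,|x^{(0)}|$ for every $v\in\mathcal{V}$. This gives a cone of uniform opening \emph{and} uniform height, and is precisely the idea your sketch is missing.
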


\begin{proof}
Fix $\rho>0$ small, and for $0<h \leq 1$ set 
$$
E_{\rho}(h)  = \{ x \in h\Z^d: \ |x| > \rho \}.
$$
Extend each $u_h$ as 0 outside $V_h$.
From Proposition \ref{Lem-Lipschitz-est} and definition of $u_h$, there exists a constant $C_\rho=C(\rho,d )$
independent of $h$
such that 
\begin{equation}\label{u-h-Lipschitz}
|u_h(x) - u_h(y)| \leq C_\rho |x-y|, \qquad x,y\in E_{\rho}(h).
\end{equation}
For $y\in \partial V_h$ we have  $u_h(y ) =0$, hence for any $x\in E_\rho (h) \cap V_h$ thanks to (\ref{u-h-Lipschitz}) we obtain
\begin{equation}\label{u-h-unif-bound}
  |u_h(x)| \leq C_\rho  |x| \leq C_\rho C_1,
\end{equation}
where the second inequality with a constant $C_1=C_1(d)$ follows from Lemma \ref{Lem-balls-inside-out}.
Thus each $u_h$, $0<h\leq 1$, when restricted to $E_\rho(h)$, 
is bounded uniformly in $h$ due to \eqref{u-h-unif-bound} and is $C_\rho$-Lipschitz
in view of \eqref{u-h-Lipschitz}.
Next, we extend each $u_h$ from $E_\rho(h)$ to a Lipschitz function on $\R^d$ having the same Lipschitz constant.
Namely, for a given $h>0$ define
\begin{equation}
 U_h^\rho(x) = \inf\limits_{\xi \in E_{\rho}(h)} \big( u_h(\xi) + C_\rho |x-\xi| \big) , \qquad x\in \R^d. 
\end{equation}
This is a well-known method of Lipschitz extension, and it is not hard to verify that $U_h^\rho$ is a $C_\rho$-Lipschitz function on $\R^d$
(see e.g. \cite[Theorem 2.3]{Heinonen}) and coincides with $u_h$ on $E_\rho(h)$.
Moreover, by construction we have that the family $\{U_h^\rho\}_{0<h\leq 1}$ is uniformly bounded and
is non-negative everywhere. Observe also that for $0<h\leq 1$, $\xi \in E_{2 \rho }(h)$
and $x \in \mathrm{C}_h(\xi)$ (see \eqref{cube-def}) by construction we have
\begin{equation}\label{u-h-U-h-est}
|u_h(x) - U_h^\rho(x)| = |U_h^\rho(\xi) - U_h^\rho(x)| \leq C_\rho h.
\end{equation}

By Arzel\`{a}-Ascoli  there is sequence $h_k\to 0$ as $k\to \infty$, and $C_\rho$-Lipschitz function
$U_0^\rho:\R^d  \to \R$ such that
$U_{h_k}^\rho \to U_0^\rho$ locally uniformly in $\R^d$. 
Due to (\ref{u-h-U-h-est}) we get $u_{h_k} \to U_0^\rho$ locally uniformly in $E_{2\rho}^\square$.
Obviously $U_0^\rho \geq 0$ on $\R^d$. 

\vspace{0.2cm}

For the support of $U_0^\rho$ we have
\begin{equation}\label{supp-outside-2delta}
\{ x\in \R^d: \ U_0^\rho(x) >0 \} \setminus \overline{B_{2\rho}} =
\left( \liminf\limits_{k\to \infty} V_{h_k}^\square  \right)^\circ \setminus \overline{B_{2\rho}}.
\end{equation}
To prove this take any $x$ from the \emph{l.h.s.} of (\ref{supp-outside-2delta}).
Since $U_0^\rho$ is continuous on $\R^d$ there is $r>0$ such that
the closed ball $\overline{B_r}(x) \subset \R^d \setminus \overline{B_{2\rho}} $
and $U_0^\rho > \e$ on $\overline{B_r}(x)$ for some $\e>0$.
By construction we have that $U_{h_k}^\rho$ converges to $U_0^\rho$
uniformly on $\overline{B_r}(x)$, hence there exists $k_0 \in \N$ large enough
such that $U_{h_k}^\rho (x) \geq \e$ on $\overline{B_r}(x)$ for any integer $k>k_0$.
But since $U_{h_k}^\rho$ agrees with $u_{h_k}$ on $E_\rho (h_k)$
we get $u_{h_k}(\xi) \geq \e$ for any $\xi \in h_k \Z^d \cap \overline{B_{r/2}}(x)$.
This implies $u_{h_k}(x) \geq \e$ on $(h_k \Z^d \cap \overline{B_{r/2}}(x) )^\square$
hence $\overline{B_{r/2}}(x) \subset V_{h_k}^\square \setminus \overline{B_{2\rho}}$
for all $k>k_0$. In particular, $x$ is from the \emph{r.h.s.} of (\ref{supp-outside-2delta}).

To see the reverse inclusion
fix any $x$ from the \emph{r.h.s.} of (\ref{supp-outside-2delta}). There exists $k_0\in \N$ large and $r>0$ small such that 
$B_r(x) \subset \bigcap\limits_{k=k_0}^\infty V_{h_k}^\square \setminus \overline{B_{2\rho}}$.
We get, in particular, that $\mathrm{dist}(x, \partial V_{h_k}^\square) \geq r$
for all $k>k_0$. Hence, by Lemma \ref{Lem-bounds-below} and definition of $u_{h_k}$
one gets $u_{h_k} \geq c_0>0$ on $B_{r}(x)$ with a constant $c_0$ 
uniform in $k>k_0$. This bound, together with (\ref{u-h-U-h-est}) implies
$U_0^\rho(x)\geq c_0$, accordingly $x$ lies in the \emph{l.h.s.} of (\ref{supp-outside-2delta}).
This completes the proof of (\ref{supp-outside-2delta}).

\vspace{0.2cm}

We next prove that $U_0^\rho$ is harmonic on $\{x\in \R^d:  U_0^\rho(x)>0\} \setminus \overline{B_{2\rho}} $.
Fix any $x\in \R^d\setminus \overline{B_{2\rho} }$ such that $U_0^\rho(x)>0$. Since $U_0^\rho$ is continuous,
there exists a closed ball $\overline{B_{2r}}(x)\subset \{U_0^\rho>0 \} \setminus \overline{B_{2\rho}}$. Let $\varphi\in C_0^\infty(B_r(x) )$
be any.
Since $U_0^\rho$ is continuous, to prove its harmonicity in $B_r(x)$ it suffices, due to Weyl's lemma,
to show that $\int_{\R^d} U_0^\rho \Delta \varphi dx=0$ where $\Delta$ is the usual (continuous) Laplace operator in $\R^d$.
The latter follows by
$$
\int\limits_{\R^d} U_0^\rho \Delta \varphi dx = 
\lim_{k \to \infty} \int\limits_{\R^d} U_{h_k}^\rho \Delta^{h_k} \varphi dx = 
\lim_{k \to \infty} \int\limits_{\R^d} u_{h_k} \Delta^{h_k} \varphi dx  = 
\lim_{k \to \infty} \int\limits_{\R^d} \Delta^{h_k} u_{h_k} \varphi dx =0,
$$
where the first equality comes from definition of $U_0^\rho$ and smoothness of $\varphi$,
the second is in view of (\ref{u-h-U-h-est}),
the third one follows from discrete integration by parts. The last equality
is a consequence of the fact that for large enough $k_0\in \N$ due to \eqref{supp-outside-2delta} one has $B_{2r}(x) \subset V_{h_k}^\square \setminus \overline{B_{2\rho}} $
for all $k>k_0$, which, in particular, implies that $\Delta^{h_k} u_{h_k} =0 $ on $B_{r}(x) $ for all $k>k_0$.

Taking the parameter $\rho \to 0$ and applying a diagonal argument
we conclude that there is a non-negative function $u_0\in C(\R^d\setminus \{0\})$, and a sequence $h_k\to 0$ such that
\begin{itemize}
 \item[(a)] $u_{h_k} \to u_0$ locally uniformly outside any neighbourhood of 0,
 \vspace{0.1cm}
 \item[(b)] $\supp u_0 = \liminf\limits_{k\to \infty}  V_{h_k}^\square $,
 \vspace{0.1cm}
 \item[(c)] $\Delta u_0  = 0$ in $\{x\in \R^d\setminus \{0\}: \ u_0(x) >0 \}$,
 \vspace{0.1cm}
 \item[(d)] $u_0$ is Lipschitz continuous outside any neighbourhood of the origin,
 where Lipschitz norm depends on the neighbourhood.
\end{itemize}

These properties give us parts (i) and (iii) of the theorem, as well as the claim of (ii) except at the origin.
To finish the proof of part (ii) it remains to study the Laplacian of $u_0$ at 0.
To this end, let $\Phi_h$ be the fundamental solution for $\Delta^h$ in $h\Z^d$, i.e.
$\Delta^h \Phi_h = -h^d \delta_0$. Here as well, following the convention above, 
suppose that $\Phi_h$ is extended to $\R^d$ so that to preserve its $\Delta^h$-Laplacian.
It is well-known (see \cite{Lawler-book}) that $\Phi_h \to \Phi_0$ as $h\to 0$ locally uniformly in $\R^d\setminus \{0\}$,
where $\Phi_0$ is the fundamental solution to continuous Laplacian.
Observe, that in view of (b) and Lemma \ref{Lem-balls-inside-out} there is $r>0$ small such that the ball $\overline{B}_r \subset \supp u_0$.
Using this and the definition of $u_{h_k}$ we have
$$
\Delta^{h_k} ( u_{h_k}  - \Phi_{h_k}) = 0 \text{ in } B_r.
$$
But since both $u_{h_k}$ and $\Phi_{h_k}$ converge locally uniformly away from 0 to continuous functions,
then the difference $|u_{h_k}  - \Phi_{h_k}| \leq C $ on $\partial B_r$ for all $k\in \N$ where $C>0$ is some large constant.
By DMP we get that $|u_{h_k}  - \Phi_{h_k}| \leq C $ in $B_r$,
hence, the limit $|u_0 - \Phi_0|$ is also bounded by $C$ in $B_{r}\setminus \{0\}$.
But as $\Delta(u_0 - \Phi_0) = 0 $ in $B_{r}\setminus \{0\}$ we get that $0$ is a removable singularity for $u_0 - \Phi_0$,
in particular $\Delta u_0 = \Delta \Phi_0 $ at 0, and the proof of part (ii) is now complete.

\vspace{0.2cm} 

The proof of part (iv) of the theorem is a direct consequence of  Theorem \ref{Thm-monotonicity}, and convergence of $u_{h_k}$ to $u_0$.

\vspace{0.2cm} 

It remains to show (v), the last assertion of the Theorem.
We will show that at each $x^{(0)}\in \partial V_0$ there exists a double cone,
with its size (opening and height) independent of $x^{(0)}$, having
vertex at $x^{(0)}$ and intersecting $\partial V_0 $ at $x^{(0)}$
only\footnote{Observe that $\partial V_0 $ is the 0-level set of $u_0$ which is a Lipschitz function on compact subsets
away from the origin, and in particular in a neighbourhood of $\partial V_0$.
But this fact alone is not enough to conclude that $\partial V_0$ is Lipschitz,
see for instance \cite{ABC}.}. This clearly implies (v).

Applying the monotonicity of (iv) in directions $\{e_i\}_{i=1}^d$ we obtain $u_0(x) = u_0(-x)$ for all $x\in \R^d$.
Due to this symmetry, to prove (v) it is enough to treat the part of $\partial V_0$ where all coordinates are non-negative.
In addition, we will also assume that the point $x^{(0)} = (x_1^{(0)}, ..., x_d^{(0)} ) \in \partial V_0$
satisfies $x_d^{(0)} \geq x_i^{(0)}$ for all $ 1 \leq i \leq d-1$. The rest of the cases are similar.
Observe that due to (iii) and Lemma \ref{Lem-balls-inside-out} there is a constant $c_0>0$ such that
\begin{equation}\label{ball-inside}
 B_{c_0} \subset \{ x \in \R^d:   u_0>0 \}.
\end{equation}
Now the choice of $x^{(0)}$ and (\ref{ball-inside}) together imply
\begin{equation}\label{x-0-d-large}
 x_d^{(0)} \geq \frac{1}{d} c_0.  
\end{equation}

We first determine a certain monotonicity region for $x^{(0)}$. Let $\mathcal{V} \subset \R^d$ be a finite set of vectors containing $e_d$
and having the rest of its elements chosen by the following rule:
for each $1\leq i \leq d-1$ 
\begin{itemize}
 \item[($\star$)] if $x_d^{(0)} \geq 2 x_i^{(0)} $ then $ e_d + e_i   $ and $e_d - e_i$ are in $\mathcal{V}$,
 \vspace{0.2cm}
 \item[($\star \star $)] if $x_d^{(0)} \leq 3 x_i^{(0)} $ then $e_i $ is in $\mathcal{V}$.
\end{itemize}

\vspace{0.1cm}

It is easy to check from definition of $\mathcal{V}$ that it contains $d$ linearly independent vectors.
For a given $v\in \mathcal{V}$ consider the halfspace $ \mathcal{H}_v = \{ x\in \R^d: \ x\cdot v>0  \}$.
We have the following monotonicity of $u_0$ in $\mathcal{H}_v$. 

\begin{equation}\label{mono-plane}
\text{If } X_1, X_2 \in \mathcal{H}_v, \   X_2-  X_1   = t v  \text{ with } t\in \R, \text{ then }
u_0(X_1) \geq u_0(X_2) \text{ iff } t\geq 0. 
\end{equation}
Indeed, decomposing $X_i$ into tangential and normal components in $\mathcal{H}_v$, one gets
$ X_i = \tau_0  + t_i v $, where $i=1,2$, $ t_i \geq 0 $ and $\tau_0\in \mathcal{H}_v$ is the same for both $X_1, X_2$ due to assumption.
Then, $|X_i |^2  = |\tau_0 |^2 + t_i^2 |v|^2  $, hence
$|X_1| \leq |X_2|$ if and only if $|t_1| \leq |t_2|$ but since $t_i\geq 0$ the latter reduces to $t_1\leq t_2$.
It remains to apply the monotonicity result of part (iv) to get \eqref{mono-plane}.
We thus obtain that $u_0$ is non-increasing in $\mathcal{H}_v$ in the normal direction.

Consider the cone
$$
\mathcal{C}_0 = \bigcap\limits_{v\in \mathcal{V}} \mathcal{H}_v.
$$
It is easy to see that the bound \eqref{x-0-d-large} and definition of $\mathcal{V}$ imply the existence of a constant $c_1>0$ independent of $x^{(0)}$ such that 
\begin{equation}\label{x-0-is-inside}
B_{c_1}(x^{(0)}) \subset \mathcal{C}_0. 
\end{equation}

As a direct corollary of (\ref{mono-plane}) the cone $\mathcal{C}$ inherits the following property.
If $X_1, X_2 \in \mathcal{C}_0$ and $X_2 - X_1 = t_1 v_1 +...+t_k v_k$ where $t_i \geq 0$, $v_i \in \mathcal{V}$ for all $1\leq i\leq k$, and $k $ is the cardinality of $\mathcal{V}$,
then
\begin{equation}\label{mono-2}
u_0(X_1 ) \geq u_0(X_2).
\end{equation}

Along with $\mathcal{C}_0$ consider as well the cone generated by $\mathcal{V}$
$$
\mathcal{C}_V = \{ t_1 v_1 +...+t_k v_k : \ t_i \geq 0, v_i \in \mathcal{V}, \text{ for all } 1\leq i \leq k \text{ and }  k=|\mathcal{V}| \}.
$$
Since $\mathcal{V}$ has $d$ linearly independent vectors, the cone $\mathcal{C}_V$ is $d$-dimensional.
We claim that $\mathcal{C} : = ( x^{(0)} - \mathcal{C}_V  ) \cap \mathcal{C}_0 $ is the sought cone,
for which we need to show that the truncated cone $\mathcal{C}$ has uniform size
and that $u_0$ is positive in the interior of $\mathcal{C}$ and is zero in the interior of 
$( x^{(0)} + \mathcal{C}_V  ) \cap \mathcal{C}_0 $. We will only show the former claim, as the latter follows similarly.

It is clear, thanks to (\ref{x-0-is-inside}), that $\mathcal{C}$ is of uniform size. 
Now assume for contradiction, that there is $x^\ast \in \INT{\mathcal{C}}$ such that $u_0(x^*) =0$. 
Consider the cone $\mathcal{C}^* = ( x^\ast + \mathcal{C}_V ) \cap \mathcal{C}_0 $.
Let us see that
\begin{equation}\label{2claims}
  u_0(x^*) \geq u_0(x), \ \forall x\in \mathcal{C}^* \qquad \text{and} \qquad x^{(0)} \text{ is in the interior of } \mathcal{C}^* .
\end{equation}
The first assertion of (\ref{2claims}) follows directly from \eqref{mono-2} since $x^* \in \mathcal{C}_0$ by definition
and any element $x\in \mathcal{C}^*$ is from $\mathcal{C}_0$ as well,
and has the form $x = x^* + t_1 v_1+...+t_k v_k$ with all $t_i \geq 0$.
The second one is simply a consequence of the definition of $x^*$.

Armed with (\ref{2claims}) the proof of (v) follows readily, since we simply get that $u_0$ is zero in an open neighbourhood of $x^{(0)}$,
which violates the condition that $x^{(0)} \in \partial V_0$. This contradiction
finishes the proof of (v), and the proof of the theorem is now complete.
\end{proof}

\begin{proof}[Proof of Theorem \ref{Thm-ASM}]
Assertion (a) follows from convergence result of Pegden-Smart
discussed in subsection \ref{subsec-ASM} in conjunction with Theorem \ref{Thm-monotonicity-ASM}.
The proof of part (b) follows from part (a) by the same argument as for part (v) of Theorem \ref{Thm-scaling-limit}.
The proof is complete.
\end{proof}

\subsection{Concluding remarks and open problems}
We finish the paper with a few remarks and open problems which we think are interesting.

\vspace{0.2cm}

\begin{remark}{\normalfont{(On monotonicity)}}
Observe that monotonicity of the odometer proved in Theorems \ref{Thm-monotonicity}, \ref{Thm-monotonicity-ASM},
\ref{Thm-ASM}, and \ref{Thm-scaling-limit} is not strict. Namely, we expect 
the odometer to be strictly monotone on its support. Such statement can be proved
for the boundary sandpile model relying on harmonicity of odometer and the strong maximum
principle, however for the classical Abelian sandpile model (ASM) it seems to require a more careful analysis, which we leave open.
We also think that the ``Boundary Abelian sandpile'' described below in the text following 
 Problem 2,
should enjoy the monotonicity property of the odometer. But to actually prove that,
one needs to work out the analogue of the least action principle (see subsection \ref{subsec-ASM}).

Next, we remark there are a few interesting (and not hard to prove) corollaries of the discrete monotonicity
of the odometer of the ASM proved in Theorem \ref{Thm-monotonicity-ASM}.
First, the monotonicity of the odometer function implies 
that the visited set of the ASM with $n\in \N$ chips at the origin,
is simply connected for any $n$, meaning that its complement in $\Z^d$ is connected as a graph.
This recovers the result of Fey and Redig \cite[Proposition 4.12]{Fey-Redig}
by a different method.
Next, relying on monotonicity of the odometer, one can show that the visited set of the ASM contains 
a diamond (a convex hull of the points $\{\pm e_i, i=1,...,d\}$ restricted to $\Z^d$) scaled by a factor of $R \approx n^{1/d}$,
and is contained in a cube of size $R \approx n^{1/d}$ (cf. \cite[Proposition 3.7]{AS}).
Of course this result is not new, and moreover the constants we can get in these embeddings are slightly worse
than the ones obtained by Fey and Redig \cite{Fey-Redig}, and imporved later by Levine and Peres \cite{Lev-Per}.
What we think is interesting here, is that the method is different,
and seems to be of a general nature, relying mainly on the monotonicity of the odometer.
Hence, similar embeddings should be possible to obtain for the boundary sandpile,
or the boundary abelian sandpile, without much difficulty.

The details of this discussion will be given elsewhere.
\end{remark}

\noindent \textbf{Problem 1.} Does the boundary sandpile process, for initial distributions concentrated at a point, have a scaling limit?

\vspace{0.2cm}

It is clear from the proof of Theorem \ref{Thm-scaling-limit}
that for any sequence of scalings $\mathfrak{h}:=\{h_k\}_{k=1}^\infty$, where $h_k\to  0$,
one may extract a subsequence $\{h_{k_m} \}$ such that the corresponding sequence of scaled odometers
will have a limit, call it $u_{\mathfrak{h}}$, satisfying the same properties as
the function $u_0$ of Theorem \ref{Thm-scaling-limit}.
In particular, by Theorem \ref{Thm-scaling-limit} (ii) we have
that any such $u_{\mathfrak{h}}$ is $C^\infty$ on its positivity set away from the neighbourhood of the origin,
and is Lipschitz continuous up to the boundary of that set.
However, we do not know if scaling limits generated by different
scaling sequences must coincide.

\vspace{0.2cm}

For several lattice-growth models the existence of scaling limit, and in some cases its geometry,
are known; for  divisible sandpile see \cite{Lev-thesis}, \cite{Lev-Per} and \cite{Lev-Per10},
for rotor-router see \cite{Lev-thesis}, \cite{Lev-Per}, \cite{Lev-Per10}, and
\cite{Fey-Redig}, and for Abelian sandpile see \cite{PS}.
The internal diffusion limited aggregation (IDLA), which is a stochastic growth model,
is studied in \cite{LBG}, \cite{L95}, \cite{Am-Al}, \cite{Jer-Sh-Lev}.
For the IDLA in critical regime, one may consult \cite{Grav-Quast}.
Growth models involving continuous amounts of mass similar to divisible
sandpile model, are studied in \cite{From-Jar}, \cite{Lucas}, and \cite{Fey-automata},
where the last paper considers a non-abelian growth model.

\vspace{0.2cm}

\noindent \textbf{Problem 2.} Assume the answer to Problem 1 is yes.
Then, what would be the limiting PDE problem
and what can be said about the geometry of the limiting shape?
In particular, will it be convex?

\vspace{0.2cm}

It is interesting to observe, that the strong flat patterns appearing on the boundary of a sandpile
as can be seen in Figure \ref{Fig-1}, might be a result of a significant
increase of the role of the free boundary, rather than the PDE problem in the interior.
It is remarkable that a similar geometry of the boundary as in Figure \ref{Fig-1},
in particular the flatness, arises also
from the classical Abelian sandpile, see Figure \ref{Fig-2},
if one treats the boundary of the growth cluster likewise,
i.e. allowing each boundary point to accumulate many particles however not exceeding the given threshold.
Nevertheless, the PDE problem of the Abelian sandpile in the set of visited sites,
is very different from that of the boundary sandpile considered here.
It seems very interesting to understand the emergence of the flatness on the boundary
rigorously.
 
\begin{figure}[!htbp]
\centering
\begin{subfigure}
  \centering
  \includegraphics[width=3in]{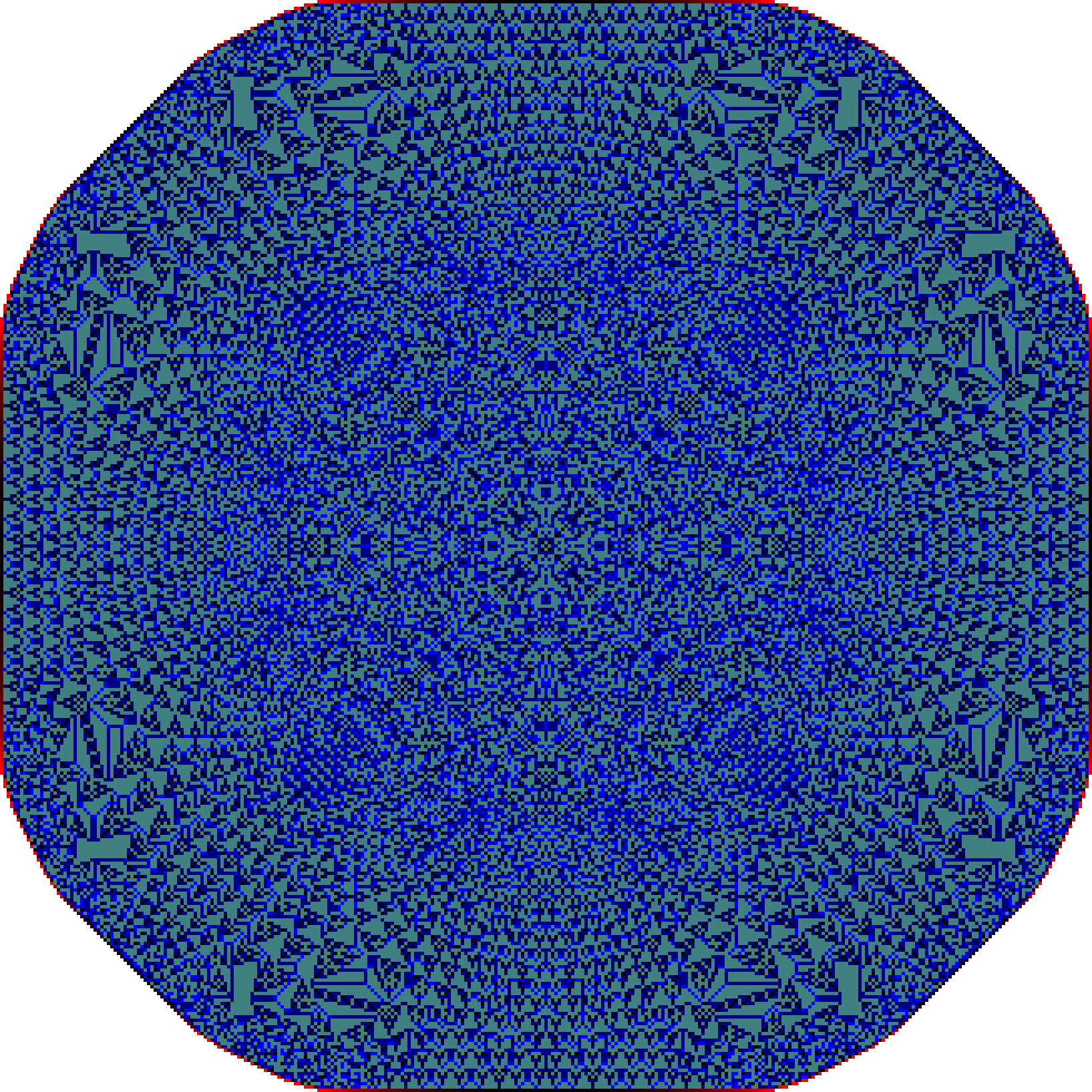}
%  \caption{A subfigure}
  \label{fig:sub1}
\end{subfigure}%
\caption{\footnotesize{The classical Abelian Sandpile on $\Z^2$ obtained from one million particles
at the origin and having boundary capacity equal 1 000.
Lattice sites carrying 0,1,2,3 particles are coloured respectively by
black, blue, dark blue, and cyan. The colouring scheme of the boundary is the same
as in Figure \ref{Fig-1}}.}
\label{Fig-2}
\end{figure}
%On the right, is the boundary sandpile on $\Z^2$ considered in this paper,
%with multiple sources. Mass  15 000, 25 000, 10 000, 50 000
%is concentrated respectively on $(-10,0)$, $(-5,15)$, $(11,11)$, and $(1,-20)$.
%The colouring scheme is the same as in Figure \ref{Fig-1}.}

\vspace{0.2cm}

Yet another surprising property of the boundary sandpile is the discontinuity of
its geometry under small perturbations of the initial mass distribution,
and a certain tendency of the sandpile dynamics towards generating convex sets
(see Figure \ref{Fig-3}).
A rigorous explanation of these phenomena seems to be a very interesting problem.
 
\begin{figure}[!htbp]
\centering
\begin{subfigure}
  \centering
  \includegraphics[width=2.2in]{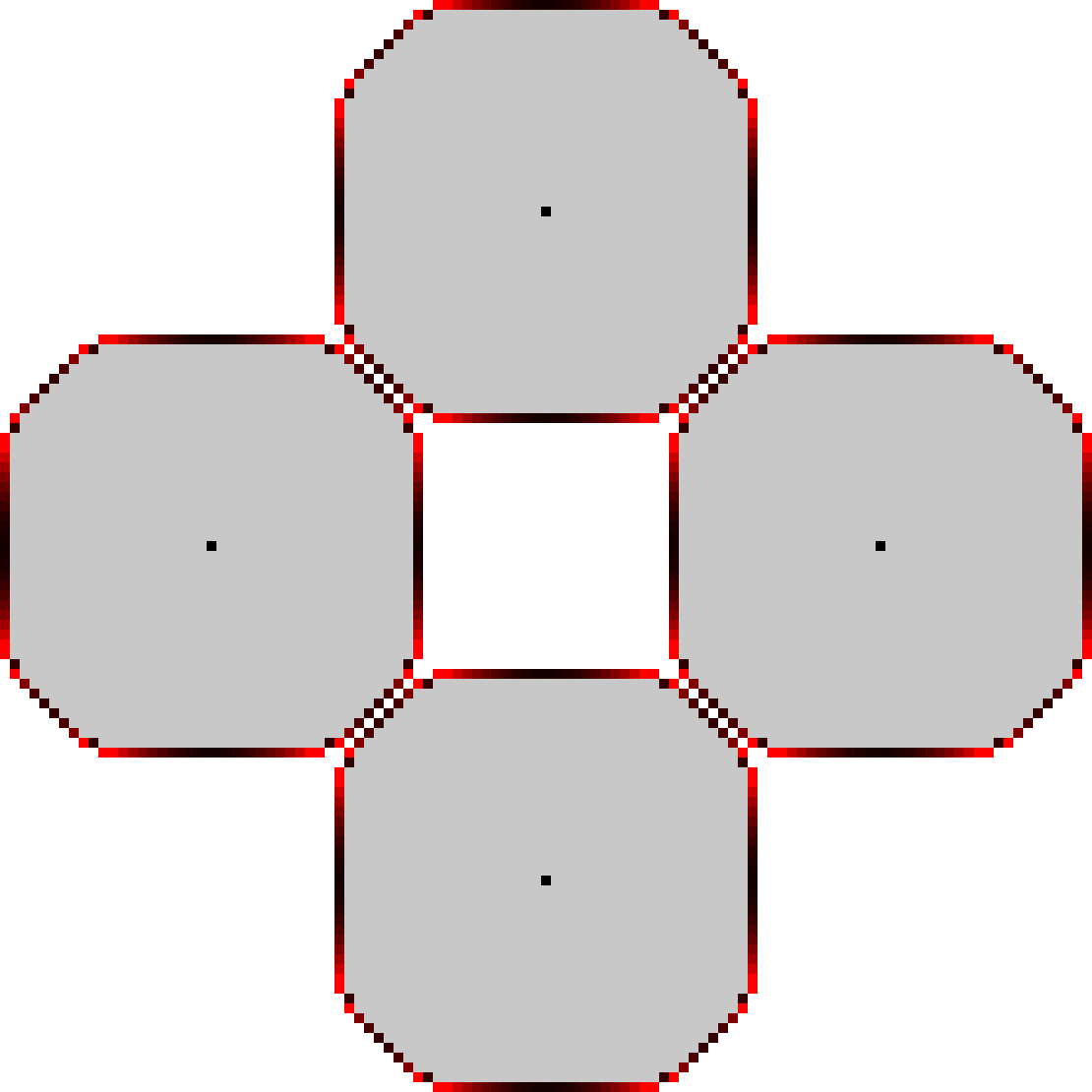}
%  \caption{A subfigure}
  \label{fig:sub1}
\end{subfigure}%
\qquad
\begin{subfigure}
  \centering
  \includegraphics[width=2.2in]{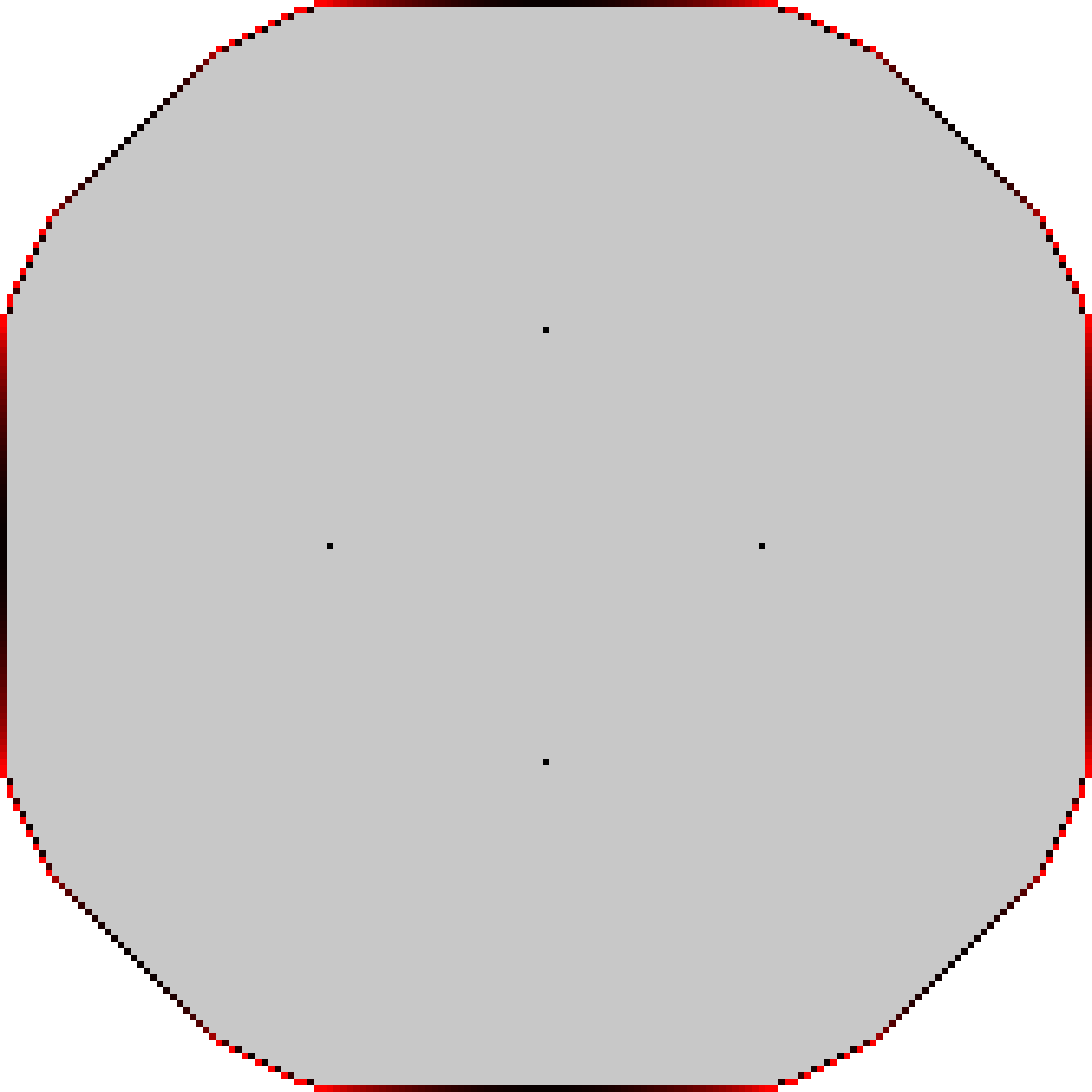}
%  \caption{A subfigure}
  \label{fig:sub1}
\end{subfigure}%
\caption{\footnotesize{The left image is a $\mathrm{BS}$ with initial distribution
concentrated at points $(\pm 34, 0)$ and $(0, \pm 34)$ on $\Z^2$, all having mass 40 000,
and the boundary capacity is set to $400$. The clusters
barely survive intersection by one lattice site.
On the right, is a $\mathrm{BS}$ having capacity 400, where mass 40 000
is concentrated on each of the sites $(\pm 33,0)$ and $(0, \pm 33)$.
The colouring scheme is identical to that of Figure \ref{Fig-1}.}}
\label{Fig-3}
\end{figure}

\vspace{0.2cm}

\noindent \textbf{Problem 3.} Is there a boundary sandpile type process,
leading to that the total mass of the system is being redistributed onto the combinatorial free boundary
(possibly with a different background rules) which has a sphere as its scaling limit?

\vspace{0.2cm}

In fact, this problem does not exclude, as a possible candidate,
the boundary sandpile process considered in this paper.
It is apparent from numerical simulations (see Figure \ref{Fig-1}), that the shape of $\mathrm{BS}$ divaricates from a sphere. However, we do not have a rigorous proof of this fact.

\bigskip

Months after the submission of this paper, the current authors found a new sandpile redistribution
rule \cite{AS}, which partially answers the quest of Problem 3, at least for single sources.
The relevance of Problem 3, however, remains in place, as there is no reason to believe that
there does not exist a more natural rule than the one obtained in \cite{AS}.

\end{document}